\documentclass{article}

\usepackage{amsmath,amsfonts,amsthm,amssymb,mathtools}
\usepackage{biblatex}
\usepackage{bbm}
\usepackage{enumitem}
\usepackage{graphicx,subcaption}
\usepackage{aliascnt}
\usepackage{hyperref}
\usepackage{cleveref}

\newtheorem{theorem}{Theorem}[section]

\newaliascnt{lemma}{theorem}
\newtheorem{lemma}[lemma]{Lemma}
\aliascntresetthe{lemma}

\newaliascnt{proposition}{theorem}
\newtheorem{proposition}[proposition]{Proposition}
\aliascntresetthe{proposition}

\newaliascnt{corollary}{theorem}

\aliascntresetthe{corollary}

\theoremstyle{definition}
\newaliascnt{definition}{theorem}

\aliascntresetthe{definition}

\theoremstyle{remark}
\newtheorem*{remark}{Remark}
\numberwithin{equation}{section}
\numberwithin{theorem}{section}

\crefname{theorem}{theorem}{theorems}
\Crefname{theorem}{Theorem}{Theorems}
\crefname{lemma}{lemma}{lemmas}
\Crefname{lemma}{Lemma}{Lemmas}
\crefname{proposition}{proposition}{propositions}
\Crefname{proposition}{Proposition}{Propositions}
\crefname{corollary}{corollary}{corollaries}
\Crefname{corollary}{Corollary}{Corollaries}
\crefname{definition}{definition}{definitions}
\Crefname{definition}{Definition}{Definitions}

\allowdisplaybreaks

\title{Phase Transition of a Semiflexible Membrane in Two Dimensions}
\author{Jingu Hwang\thanks{\texttt{j9hwang@kaist.ac.kr}}}
\date{}
\begin{document}
\maketitle
\begin{abstract}
    We study a two-dimensional semiflexible membrane model whose formal Hamiltonian is given by $H[\phi]=\sum_x \bigl(\|\nabla \phi_x\|^2 +N^\lambda |\Delta \phi_x|^2\bigr)$, interpolating between the discrete Gaussian free field (DGFF) and the membrane model (MM). We analyze its finite-volume covariance in the bulk as $N\to\infty$, and identify distinct regimes depending on the parameter $\lambda$.
    
    For $\lambda<0$, the covariance of the model agrees with that of the DGFF up to a negligible error, while for $\lambda>2$, it agrees with the rescaled MM covariance up to a negligible error. In the intermediate regime $\lambda\in[0,2]$, we identify a different crossover behavior: in the microscopic range $\|x-y\|\ll N^{\lambda/2}$, the leading asymptotics no longer resolve the precise distance between the two points. In this microscopic regime, we further determine the leading logarithmic coefficient of the bulk covariance. These results provide a unified description of the crossover from gradient-dominated to curvature-dominated behavior in this class of models.
\end{abstract}
    % !TEX root = ../main.tex
\section{Introduction}
\subsection{Model and Main Result}
For each positive integer $N$, consider the finite box $V_N\coloneq \{-N,\dots,N\}^d$ in $\mathbb{Z}^d$. The discrete Laplacian $\Delta$ is defined by
\[\Delta f(x)=\frac{1}{2d}\sum_{y:\|x-y\|_1=1}f(y)-f(x),\]
and $\Delta^2$ is defined as $\Delta^2=\Delta\circ \Delta$. Let $-\Delta|_N$ be the operator acting on functions that vanish outside $V_N$, and define $\Delta^2|_N$ similarly. We emphasize that \(\Delta^2|_N\) is not the same as \((\Delta|_N)^2\): the former is the restriction of the whole-space operator $\Delta^2$, while the latter applies the restricted operator twice. The model of this paper is the Gaussian interface $\{\varphi_x\}_{x\in V_N}$ whose covariance is the Green function of an operator interpolating between the discrete Laplacian $-\Delta|_N$ and the discrete bi-Laplacian $\Delta^2|_N$. 

Before introducing the interpolation model, we briefly review the two constituent models associated with these two operators: the discrete Gaussian free field (DGFF) and the membrane model (MM). DGFF is the centered Gaussian field defined via the (formal) Hamiltonian $H^\nabla[\phi]=\frac{1}{2}\sum|\nabla\phi_x|^2$, and this Hamiltonian describes the energy associated with height fluctuations. Its covariance can be written as $(-\Delta|_N)^{-1}$. It exhibits logarithmic correlations when $d=2$, and is localized when $d\ge3$. Its fluctuation behavior and extremal properties have been extensively studied. See, e.g., \autocite{biskup2016extreme, bolthausen2001entropic, bramson2016convergence, daviaud2006extremes}.

MM is the centered Gaussian field whose covariance is the Green function of the discrete bi-Laplacian, and its formal Hamiltonian is given by $H^\Delta[\phi]=\frac{1}{2}\sum|\Delta\phi_x|^2$. Its covariance is $(\Delta^2|_N)^{-1}$, and MM exhibits logarithmic correlation when $d=4$ and is localized when $d\ge5$. It is less well understood, partly because fewer analytic and probabilistic tools are available. For recent studies of MM, see, e.g., \autocite{kurt2009maximum, schweiger2020maximum}.

A fundamental difference between DGFF and MM is that DGFF is log-correlated when $d=2$, while such behavior for MM occurs when $d=4$. It is therefore natural to investigate the behavior of the combination of these two models. This leads to the random interface model whose formal Hamiltonian is given by
\[H[\phi]=\frac{1}{2}\sum_x\bigl[\kappa_1|\nabla \phi_x|^2+\kappa_2|\Delta\phi_x|^2\bigr],\]
for $\kappa_1,\kappa_2>0$, i.e., the mixed model of DGFF and MM. We may let $\kappa_1=1$ by a suitable normalization. In the rest of the paper, we consider $\kappa_1=1$ and $\kappa_2=\kappa$.

This model has been studied in various regimes of the parameter $\kappa$. When $\kappa$ is a constant, \autocite{cipriani2021scaling} showed that its scaling limit coincides with that of the DGFF. Numerical investigations of the two-dimensional integer-valued model with domain-size dependent coefficients were carried out in \autocite{ruiz2005phase}. The scaling limit in this setting was established in \autocite{cipriani2020scaling}, where a phase transition occurs at $\kappa \asymp N^2$. We show that $\kappa\asymp N^2$ also marks a transition in the covariance behavior.

We now give a precise formulation of the model. We always assume the two-dimensional case, and we let $\kappa=N^\lambda$. Consider the discrete Dirichlet problem
\begin{equation}\label{eq:cl}
    \begin{aligned}
        (-\Delta+N^\lambda \Delta^2)G_N(x,y)&=\delta_y(x),&\quad \forall x\in V_N,\\
        G_N(x,y)&=0,&\quad \forall x\notin V_N,
    \end{aligned}
\end{equation}
and let $L_N=(-\Delta+N^\lambda\Delta^2)$. Here, the operator $L_N$ is understood as acting on the space of functions vanishing outside $V_N$. This boundary condition is called the \emph{Dirichlet} boundary condition, or the \emph{clamped} boundary condition. The operator $L_N$ is positive definite (see \autocite{kurt2009maximum}, or see \eqref{eq:posdef} below), and we can consider a Gaussian interface on $V_N$ with zero boundary condition, whose covariance is given by $(L_N)^{-1}=G_N$. In other words, the probability measure $\mathbb{P}_N$ on $\mathbb{R}^{\mathbb{Z}^d}$ has the following density formula:
\begin{equation}\label{eq:probmeasure}
    \mathbb{P}_N(\mathrm{d}\phi)=\frac{1}{Z_N}\exp\left(-\frac{1}{2}\langle \phi, L_N \phi\rangle\right)\prod_{x\in V_N}\mathrm{d}\phi_x\prod_{x\in\mathbb{Z}^d\setminus V_N}\mathrm{d}\delta_0(\phi_x),
\end{equation}
where $Z_N$ is the normalizing constant, called the \emph{partition function}. This model is called a \emph{semiflexible membrane model}.

As already mentioned, $\lambda=2$ gives a critical behavior for the scaling limit. Interestingly, the same value $\lambda=2$ also marks a transition in the large-volume covariance asymptotics. 
More precisely, the following nontrivial behavior occurs. Here, $\operatorname{Cov}(\phi_x,\phi_y)$ is the covariance of $\phi_x$ and $\phi_y$ under the probability measure $\mathbb{P}_N$ given in \eqref{eq:probmeasure}. 
\begin{theorem}[Covariance calculation for competition regime]\label{thm:compregime}
    For $d=2$, let $\{\phi_x\}_{x\in V_N}$ be the centered Gaussian interface, zero outside $V_N$, whose covariance is $G_N$. For each fixed $\lambda\in[0,2]$ and $\delta\in(0,1)$, the following holds for $x,y\in V_N^\delta\coloneq V_{\lfloor (1-\delta)N\rfloor}$: As $N\to\infty$,
    \begin{enumerate}[label=(\alph*)]
        \item(Long-range covariance) If $\|x-y\|\gtrsim N^{\lambda/2}$, then
        \[\operatorname{Cov}(\phi_x,\phi_y)=\operatorname{Cov}^\nabla_N(\phi_x,\phi_y)+O(1).\]
        Here, $\operatorname{Cov}^\nabla_N$ denotes the covariance of the two-dimensional DGFF on $V_N$, with zero boundary condition.
        \item(Short-range covariance) If $\|x-y\|\ll N^{\lambda/2}$, then 
        \[\operatorname{Cov}(\phi_x,\phi_y)=\frac{2-\lambda}{\pi}\log N+O(1).\]
    \end{enumerate}
    The asymptotic notation such as $\gtrsim$ or $\ll$ is defined later; see \Cref{ch:notation}.
\end{theorem}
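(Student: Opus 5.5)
Write $\kappa=N^\lambda$, $\ell=N^{\lambda/2}$, and compare $G_N$ with the infinite-volume Green function $G_\infty=(-\Delta+\kappa\Delta^2)^{-1}$ on $\mathbb{Z}^2$. In Fourier variables, with $\mu(\theta)=\frac12\sum_{i}(1-\cos\theta_i)$ the symbol of $-\Delta$, the symbol of $L=-\Delta+\kappa\Delta^2$ is $\mu(1+\kappa\mu)$. The partial fraction identity
\[
\frac{1}{\mu(1+\kappa\mu)}=\frac1\mu-\frac{1}{\mu+\kappa^{-1}}
\]
gives the operator identity $L^{-1}=(-\Delta)^{-1}-(-\Delta+\kappa^{-1})^{-1}$ on the whole lattice. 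The same identity on $V_N$ (where $L_N\neq -\Delta|_N(1-\kappa\Delta|_N)$ because of the clamped condition) is only approximate, so I would split the proof into a whole-space computation plus a boundary-correction estimate.

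\textbf{Whole-space step.} The massive Green function $g_m=(-\Delta+m^2)^{-1}$ with $m^2=\kappa^{-1}=\ell^{-2}$ satisfies the standard estimates
\[
g_m(x,y)=\tfrac{2}{\pi}\log\bigl(\ell/(1+\|x-y\|)\bigr)+O(1)\quad\text{for }\|x-y\|\lesssim\ell,
\]
and $g_m(x,y)=O(1)$ (indeed exponentially small) for $\|x-y\|\gtrsim \ell$. This uses the normalization $\Delta=\frac14\sum-\mathrm{id}$, so $-\Delta$ has symbol $\approx\|\theta\|^2/4$, and the DGFF Green function behaves like $\frac2\pi\log$. Replacing $(-\Delta)^{-1}$ by the Dirichlet DGFF covariance $\operatorname{Cov}^\nabla_N$, I will define the candidate
\[
\tilde G(x,y)=\operatorname{Cov}^\nabla_N(\phi_x,\phi_y)-g_m(x,y).
\]
For (a), $g_m=O(1)$ in the range $\|x-y\|\gtrsim\ell$, so $\tilde G=\operatorname{Cov}^\nabla_N+O(1)$. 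For (b), in the bulk $\operatorname{Cov}^\nabla_N(\phi_x,\phi_y)=\frac2\pi\log\bigl(N/(1+\|x-y\|)\bigr)+O(1)$, and subtracting gives
\[
\tilde G=\tfrac2\pi\log(N/\ell)+O(1)=\tfrac{2-\lambda}{\pi}\log N+O(1).
\]
The distance cancels between the two logarithms, which explains why the short-range asymptotics do not resolve $\|x-y\|$. For $\|x-y\|\ll \ell$ but not $O(1)$, the $O(1)$ error terms must be uniform in $\|x-y\|$; this follows from the uniform lattice asymptotics.

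\textbf{Boundary correction, the main obstacle.} I must show $\sup_{x,y\in V_N^\delta}|G_N(x,y)-\tilde G(x,y)|=O(1)$. Fix $y$ and set $h=G_N(\cdot,y)-\tilde G(\cdot,y)$ restricted to $V_N$. I would compute $L_N\tilde G(\cdot,y)$. In the interior of $V_N$ (distance $\ge2$ from the boundary), $(-\Delta)(1-\kappa\Delta)$ applied to $G^\nabla_N-g_m$ gives $\delta_y$ plus an error. That error comes from the Dirichlet boundary of $G^\nabla_N$ and from $g_m$ not vanishing outside $V_N$, and is supported near $\partial V_N$ or exponentially small. Concretely, $-\Delta\tilde G=\delta_y-(1-\kappa\Delta)^{-1}$-type terms, and I would verify $L_N h=r$ with $r$ supported in a layer of width $2$ at $\partial V_N$ plus exponentially small terms of size $e^{-cN/\ell}$. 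The factor $\kappa$ in front of $\Delta^2$ means the boundary values of $r$ are of size up to $\kappa\cdot|\nabla^2\tilde G|\approx \kappa N^{-2}$ near the boundary, which is $O(1)$ exactly when $\lambda\le2$. This is consistent with the transition at $\lambda=2$ and is where the hypothesis $\lambda\in[0,2]$ enters.

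To control $h=L_N^{-1}r$, I would use the energy identity $\langle h,L_Nh\rangle=\|\nabla h\|^2+\kappa\|\Delta h\|^2$ (positive definiteness \eqref{eq:posdef}) together with a maximum-principle-free bound for points at distance $\delta N$ from the support of $r$. I expect this to be the hardest part, because $L_N$ has no maximum principle. The fallback is to use the factorization $L_N^{-1}\approx(-\Delta|_N)^{-1}(1-\kappa\Delta|_N)^{-1}$ and estimate the discrepancy $L_N-(-\Delta|_N)(1-\kappa\Delta|_N)$, which is a rank-type perturbation supported on the boundary layer, via explicit Dirichlet heat-kernel / random walk bounds for each factor. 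In that route, the massive factor $(1-\kappa\Delta|_N)^{-1}$ damps boundary influence over scale $\ell\le N$, and the DGFF factor contributes $O(1)$ at bulk points. Either route should yield $h(x)=O(1)$ uniformly for $x,y\in V_N^\delta$, which completes (a) and (b).
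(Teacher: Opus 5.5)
Your leading-order computation is sound and matches the paper's analysis of $\bar G_N=G^0_N-G^m_N$ (\Cref{thm:sslongrange} and \Cref{thm:ssshortrange}). Using the whole-space $g_m$ instead of the Dirichlet massive Green function only changes things by $\sum_z H^m_N(x,z)G^m(z,y)$, which is $o(1)$ for $\lambda<2$ and $O(1)$ for $\lambda=2$.

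\textbf{The gap.} The missing step is the boundary correction $\sup_{x,y\in V_N^\delta}|G_N-\tilde G|=O(1)$. This is the real substance of the theorem, you do not prove it, and the heuristic you give for it is wrong.

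Since $-\Delta_N G^0_N(\cdot,y)=\delta_y$, one has exactly
$L_N G^0_N(\cdot,y)=\delta_y-\kappa\Delta_N\delta_y+\kappa D_N G^0_N(\cdot,y)$,
where $D_N=\Delta^2_N-(\Delta_N)^2$ is diagonal with bounded nonnegative entries supported on $\partial^-V_N$.
\begin{itemize}
\item Because $G^0_N(z,y)\asymp N^{-1}$ for $z\in\partial^-V_N$ away from the corners, $G^0_N$ vanishes only to first order at the boundary, not to second order as the clamped condition requires. Hence your source $L_Nh$ contains $-\kappa D_NG^0_N(\cdot,y)$, of size $\asymp N^{\lambda-1}$ along the whole inner layer, not $\kappa N^{-2}$.
\item For $\lambda=2$ it is worse. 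Then $N/\ell=1$, so there is no exponential smallness: $g_m(\cdot,y)\asymp 1$ on $\partial_2V_N$, and truncating it produces a source of size $\asymp\kappa=N^2$.
\end{itemize}
That the bulk response to these sources is still $O(1)$ depends on $G_N(x,z)$ being very small for $z$ next to the clamped boundary. Heuristically $G_N(x,z)\lesssim N^{-1-\lambda/2}$, reflecting a boundary layer of width $\ell$. This is a boundary estimate for the clamped mixed operator, as hard as the theorem itself, and it appears nowhere in your plan.

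\textbf{Why your tools do not close it.}
\begin{itemize}
\item The energy identity controls only $\|\nabla h\|$ and $\kappa^{1/2}\|\Delta h\|$ in $L^2$. In two dimensions this does not give pointwise bounds, and the integer-order $H^2$ route is off by a power of $N$ unless $\lambda=2$ (see the remark after \Cref{lemma:fracSobolev}).
\item Your fallback, made precise, is the resolvent argument of \Cref{ch:DGFFdom}. Using $L_N-\bar L_N=\kappa D_N$, it gives $|G_N-\bar G_N|\lesssim \kappa\sum_{z\in\partial^-V_N}G^0_N(x,z)^2\asymp N^{\lambda-1}$, as in \eqref{eq:varbound}. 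That suffices for $\lambda\le 1$ but fails on $(1,2]$ unless you have the boundary decay of $G_N$ just described.
\end{itemize}

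\textbf{What the paper does instead.} It avoids boundary-layer sources altogether.
\begin{itemize}
\item It compares $G_N$ with $\bar G_N$, which satisfies $L\bar G_N=\delta_y$ in $V_N$ with $\bar G_N=\Delta\bar G_N=0$ on $\partial V_N$. Hence $\bar G_N-G_N$ is $L$-harmonic in $V_N$.
\item It subtracts a function $f_0$ that equals $\bar G_N$ near $\partial V_N$, vanishes in the bulk, and is smooth on an interior annulus. The new source $-Lf_0$ then lives in that annulus, with $|Lf_0|\lesssim N^{-2}+N^{\lambda-4}$; this is where $\lambda\le 2$ enters.
\item Energy estimates, a cutoff with a Caccioppoli bound for $\nabla\eta\cdot\nabla\Delta U$, and the spectral $H^{1+\varepsilon}$ Sobolev embedding (\Cref{thm:fractionalnorm}) then give the uniform $O(1)$ bound.
\end{itemize}
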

\begin{remark}
    The well-known asymptotic for $\operatorname{Cov}^\nabla_N(\phi_x,\phi_y)$ on the set $V_N^\delta$ is as follows (see, for example, \autocite[Theorem 1.6.7]{lawler2013intersections}):
    \begin{equation}\label{eq:bulkasymp}
        \operatorname{Cov}^\nabla_N(x,y)=\frac{2}{\pi}\log \left(\frac{N}{\|x-y\|+ 1}\right)+O(1).
    \end{equation}
    Hence, if $\|x-y\|\asymp N$, we have $\operatorname{Cov}^\nabla_N(x,y)=O(1)$. When $N^{\lambda/2}\lesssim \|x-y\|\ll N$, \Cref{thm:compregime} implies that the covariance agrees with the DGFF covariance up to an $O(1)$ error, which is relatively small compared to $\log N$. On the other hand, when $\|x-y\|\asymp N$, the covariance itself remains bounded. Thus the logarithmic structure is most visible in the mesoscopic and microscopic regimes $\|x-y\|\ll N$.
\end{remark}
\begin{remark}
    In \eqref{eq:bulkasymp}, the dependence on the distance $\|x-y\|$ is explicitly reflected. Hence, in \Cref{thm:compregime}, the macroscopic or mesoscopic regimes $\|x-y\|\gtrsim N^{\lambda/2}$ preserve their distance scale. In contrast, when the distance scale is microscopic, i.e., $\|x-y\|\ll N^{\lambda/2}$, all terms involving $\|x-y\|$ are absorbed into the $O(1)$ error, leading to a microscopic distance-information collapse.
\end{remark}

On the other hand, if $\lambda\notin[0,2]$, then either the DGFF or the MM dominates the whole structure, and hence the following covariance estimates are established:
\begin{theorem}[Covariance calculation for dominant regime]\label{thm:domregime}
    For $d=2$, let $\{\phi_x\}_{x\in V_N}$ be as in \Cref{thm:compregime}. Fix $\delta\in(0,1)$. Then the following holds for $x,y\in V_N^\delta$:
    \begin{enumerate}[label=(\alph*)]
        \item(DGFF domination regime) If $\lambda<0$, then
            \[\operatorname{Cov}(\phi_x,\phi_y)=\operatorname{Cov}^\nabla_N(\phi_x,\phi_y)+O(N^\lambda).\]
        \item(MM domination regime) If $\lambda>2$, then
        \[\operatorname{Cov}(\phi_x,\phi_y)=N^{-\lambda}\operatorname{Cov}^{\Delta}_N(\phi_x,\phi_y)+O(N^{4-2\lambda}\log N).\]
    \end{enumerate}
    Here, $\operatorname{Cov}^{\Delta}_N(\phi_x,\phi_y)$ denotes the covariance of the two-dimensional MM with zero boundary conditions. It coincides with the Green function $G^{\Delta}_N(x,y)$ of the following boundary value problem: for any fixed $y\in V_N$,
    \begin{equation}\label{eq:membrane}
        \begin{aligned}
            \Delta^2 G^{\Delta}_N(x,y)&=\delta_y(x),&&\quad \forall x\in V_N,\\
            G^{\Delta}_N(x,y)&=0,&&\quad \forall x\in\partial_2 V_N.
        \end{aligned}
    \end{equation}
\end{theorem}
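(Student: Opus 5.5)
We treat both regimes as perturbations of a single dominant operator, using resolvent-type identities between the finite-volume Green functions. Write $A=-\Delta|_N$, $B=\Delta^2|_N$ (the clamped bi-Laplacian, acting on functions vanishing outside $V_N$), so $L_N=A+N^\lambda B$. All three operators are positive definite. As quadratic forms, $\langle f,Bf\rangle=\sum_{x\in\mathbb{Z}^2}|\Delta f(x)|^2$ and $\langle f,Af\rangle=\sum|\nabla f|^2$ up to constants. The basic tool is the second resolvent identity. For (a),
\[
G_N=A^{-1}-N^\lambda A^{-1}BG_N, \qquad G_N=A^{-1}-N^{\lambda}A^{-1}BA^{-1}+N^{2\lambda}A^{-1}BA^{-1}BG_N .
\]
For (b),
\[
G_N=N^{-\lambda}B^{-1}-N^{-\lambda}B^{-1}AG_N ,
\]
and we iterate once more if needed. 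The task is to bound the correction terms entrywise at $x,y\in V_N^\delta$.

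\textbf{Part (a), $\lambda<0$.} The correction is $-N^\lambda\langle BG_N(\cdot,y),A^{-1}(\cdot,x)\rangle=-N^\lambda\sum_z \Delta g_x(z)\,\Delta G_N(z,y)$, with $g_x=A^{-1}(\cdot,x)$ and the sum over all of $\mathbb{Z}^2$. Since $-\Delta g_x=\delta_x$ inside $V_N$, the function $\Delta g_x$ is $-\delta_x$ plus a boundary layer on $\partial V_N$, where $g_x$ jumps to $0$. The interior term contributes $N^\lambda G_N(x,y)$. This is harmless once we know $G_N\le A^{-1}$ (operator monotonicity plus positivity of entries, or simply $G_N(x,y)=O(\log N)$). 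To improve the $\log N$, we use the second-order expansion: its middle term $N^\lambda (A^{-1}BA^{-1})(x,y)$ equals $N^\lambda\sum_z\Delta g_x\Delta g_y$, which is $O(N^\lambda)$ because the boundary layers have $|\nabla g_x|\lesssim 1/N$ on $\partial V_N$ when $x$ is at distance $\ge\delta N$. The remainder is $O(N^{2\lambda}\log N)=O(N^\lambda)$. The boundary-layer estimate requires gradient bounds for the DGFF Green function near the boundary, uniformly for bulk $x$. These are standard and follow from the gambler's-ruin / Beurling-type estimates in \autocite{lawler2013intersections}.

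\textbf{Part (b), $\lambda>2$.} Here the correction is $N^{-\lambda}\bigl(B^{-1}AG_N\bigr)(x,y)=N^{-\lambda}\sum_z \nabla_z G^\Delta_N(x,z)\cdot\nabla_z G_N(z,y)$. We need the known two-dimensional membrane estimates $G^\Delta_N(x,z)\lesssim N^2$ and $|\nabla G^\Delta_N(x,\cdot)|\lesssim N$. Both follow from the Müller/Kurt-type bounds or from the representation through random walks, and bulk-to-bulk $G^\Delta_N\asymp N^2$. With these, a single crude iteration gives
\[
G_N=N^{-\lambda}G^\Delta_N+O\bigl(N^{-2\lambda}\,\|\nabla G^\Delta\|\cdot\|\nabla G^\Delta\|\cdot N^2\bigr)\approx O(N^{-2\lambda}\cdot N\cdot N\cdot N^2)=O(N^{4-2\lambda}).
\]
The extra $\log N$ appears because the sum $\sum_z|\nabla G^\Delta(x,z)||\nabla G^\Delta(z,y)|$ picks up a logarithm from the near-diagonal region, where $\nabla G^\Delta(x,z)\sim |x-z|\log$. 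A cleaner approach is to write the correction as $N^{-2\lambda}\langle \nabla G^\Delta(x,\cdot),\nabla G^\Delta(\cdot,y)\rangle+N^{-2\lambda}\langle \ldots AG_N\rangle$. We bound the first term by Cauchy--Schwarz via $\langle G^\Delta(x,\cdot),A\,G^\Delta(\cdot,x)\rangle$, and control the second using $\langle f,L_Nf\rangle\ge N^\lambda\langle f,Bf\rangle$.

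\textbf{Main obstacle.} The hardest step is the uniform gradient bounds on the clamped bi-Laplacian Green function in $d=2$ (part (b)), and on the DGFF boundary layer (part (a)), for bulk points. Without a maximum principle for $\Delta^2$, I would first try to obtain them from the discrete Fourier-type representation on the box. Failing that, I would fall back on energy estimates: $\sum|\Delta G^\Delta(x,\cdot)|^2=G^\Delta(x,x)\lesssim N^2$, combined with the discrete Poincaré inequality $\sum|\nabla f|^2\le CN^2\sum|\Delta f|^2$ for clamped $f$. Together these give $\sum|\nabla G^\Delta|^2\lesssim N^4$, which is exactly the size needed for the $N^{4-2\lambda}$ error after Cauchy--Schwarz.
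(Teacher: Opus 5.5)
\textbf{The gap is in part (a): no term containing $G_N$ is ever actually estimated.}

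First, since $\Delta g_x=-\delta_x$ on $V_N$, the interior contribution to $N^\lambda(A^{-1}BG_N)(x,y)=N^\lambda\sum_z\Delta g_x(z)\,\Delta G_N(z,y)$ is $-N^\lambda\Delta G_N(x,y)$ (Laplacian in the first variable). It is not $N^\lambda G_N(x,y)$, and you have no bound on $\Delta G_N$.

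Second, and decisively, you assert without argument that the second-order remainder $N^{2\lambda}(A^{-1}BA^{-1}BG_N)(x,y)$ is $O(N^{2\lambda}\log N)$, and the natural arguments all fail:
\begin{itemize}
\item Since $(A^{-1}BA^{-1})(x,w)=\delta_{xw}+(\text{boundary-layer terms})$, the interior part of the remainder is $N^{2\lambda}(\Delta_N^2G_N)(x,y)$. This is a fourth-order difference of the unknown Green function.
\item Entrywise bounds on the factors cannot just be multiplied, because each matrix product sums over $|V_N|\asymp N^2$ sites.
\item Substituting $N^\lambda BG_N=I-AG_N$ turns the remainder back into $N^\lambda(A^{-1}BA^{-1})(x,y)-(A^{-1}-G_N)(x,y)$. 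Bounding it is therefore equivalent to the statement you are proving.
\item $\ell^2$ operator-norm bounds do not help either. In (b), $\|\Gamma_N\Delta_N\|\lesssim N^{2-\lambda}$ is genuinely small. In (a), the smallness of the boundary contribution comes from the separation between the bulk point $x$ and the layer where $\Delta_N^2$ and $(\Delta_N)^2$ differ, which operator norms do not see.
\end{itemize}

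\textbf{The missing idea is the positivity and Cauchy--Schwarz comparison the paper uses.} It works directly in your notation. Since $B\succeq0$, we have $A^{-1}-G_N\succeq0$, so testing against $\delta_x\pm\delta_y$ reduces everything to diagonal entries. The identity $N^\lambda(A^{-1}BA^{-1}-G_NBG_N)=(A^{-1}-G_N)(L_N+A)(A^{-1}-G_N)\succeq0$ gives $G_NBG_N\preceq A^{-1}BA^{-1}$, and therefore
\[
\begin{aligned}
0\le (A^{-1}-G_N)(x,x)&=N^\lambda\bigl\langle B^{1/2}A^{-1}\delta_x,\,B^{1/2}G_N\delta_x\bigr\rangle\le N^\lambda (A^{-1}BA^{-1})(x,x)\\
&=N^\lambda\sum_{z}\bigl(\Delta g_x(z)\bigr)^2=N^\lambda\bigl(1+O(N^{-1})\bigr).
\end{aligned}
\]
The last step is exactly your boundary-layer computation. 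The paper instead routes through $\bar G_N$. A spectral bound gives $\|G^0_N-\bar G_N\|\le N^\lambda$, and the same trick with the diagonal boundary operator $D_N=\Delta_N^2-(\Delta_N)^2$ gives $G_N-\bar G_N=O(N^{\lambda-1})$.

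\textbf{Part (b).} Your sketch can be completed along the lines you indicate. It would then genuinely differ from the paper's Neumann series with entrywise membrane Green function bounds. Again, though, the $G_N$-term must be handled by positivity. We have $\Gamma_N-G_N=\Gamma_NAG_N\succeq0$ and $G_NAG_N\preceq\Gamma_NA\Gamma_N$, which give
\[
0\le(\Gamma_N-G_N)(x,x)\le N^{-2\lambda}\langle G^\Delta_N\delta_x,AG^\Delta_N\delta_x\rangle\lesssim N^{2-2\lambda}G^\Delta_N(x,x)\lesssim N^{4-2\lambda},
\]
using $A\preceq\lambda_{\min}(A)^{-1}A^2\preceq CN^2B$. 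This route needs only the diagonal bound $G^\Delta_N(x,x)\lesssim N^2$, and it even removes the logarithm.
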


\begin{figure}[!ht]
    \centering
    \begin{subfigure}{\linewidth}
        \includegraphics[width=\linewidth]{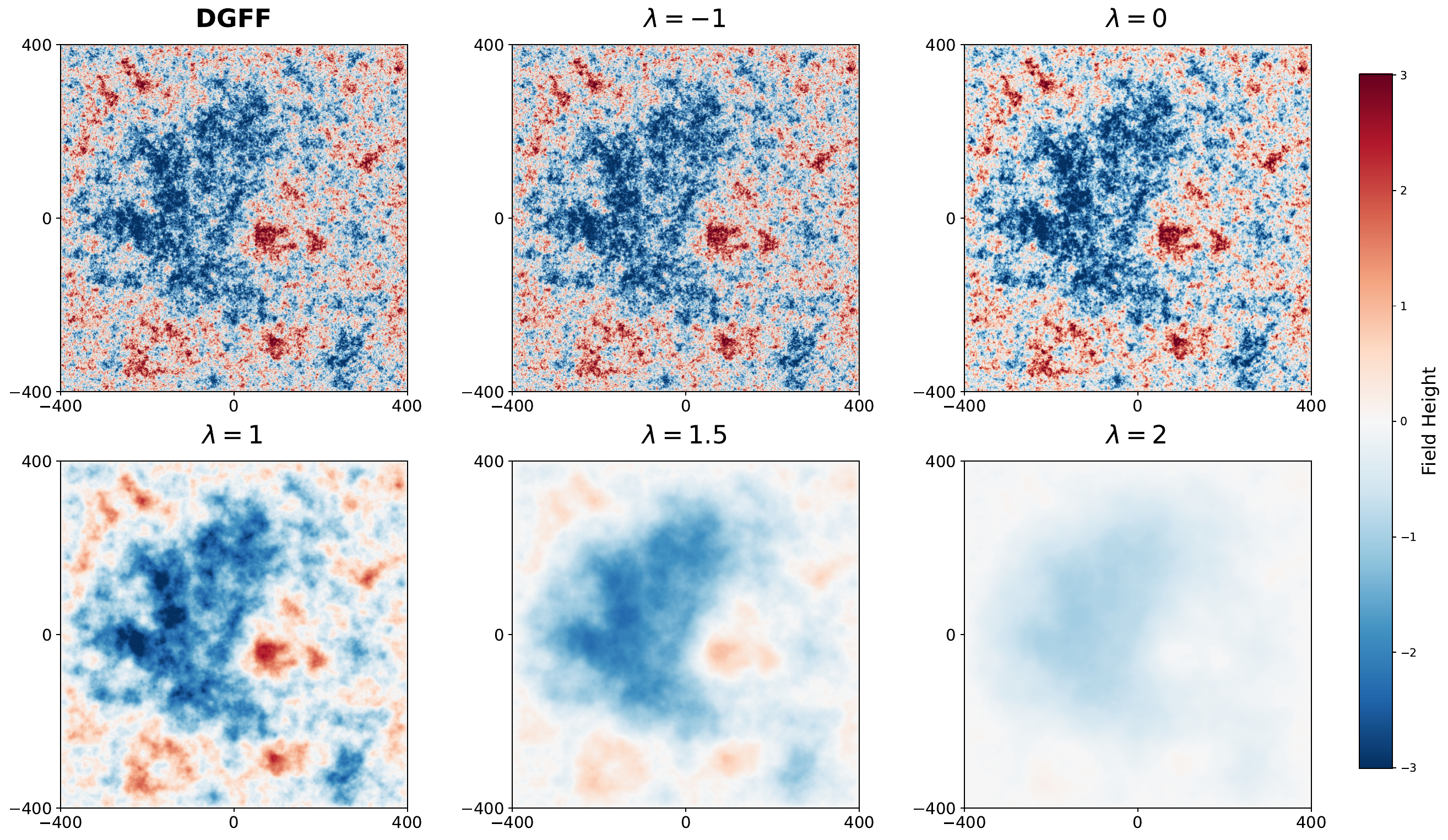}
        \caption{Contour plots of various $\lambda$.}
    \end{subfigure}
    
    \medskip
    
    \begin{subfigure}{\linewidth}
        \includegraphics[width=\linewidth]{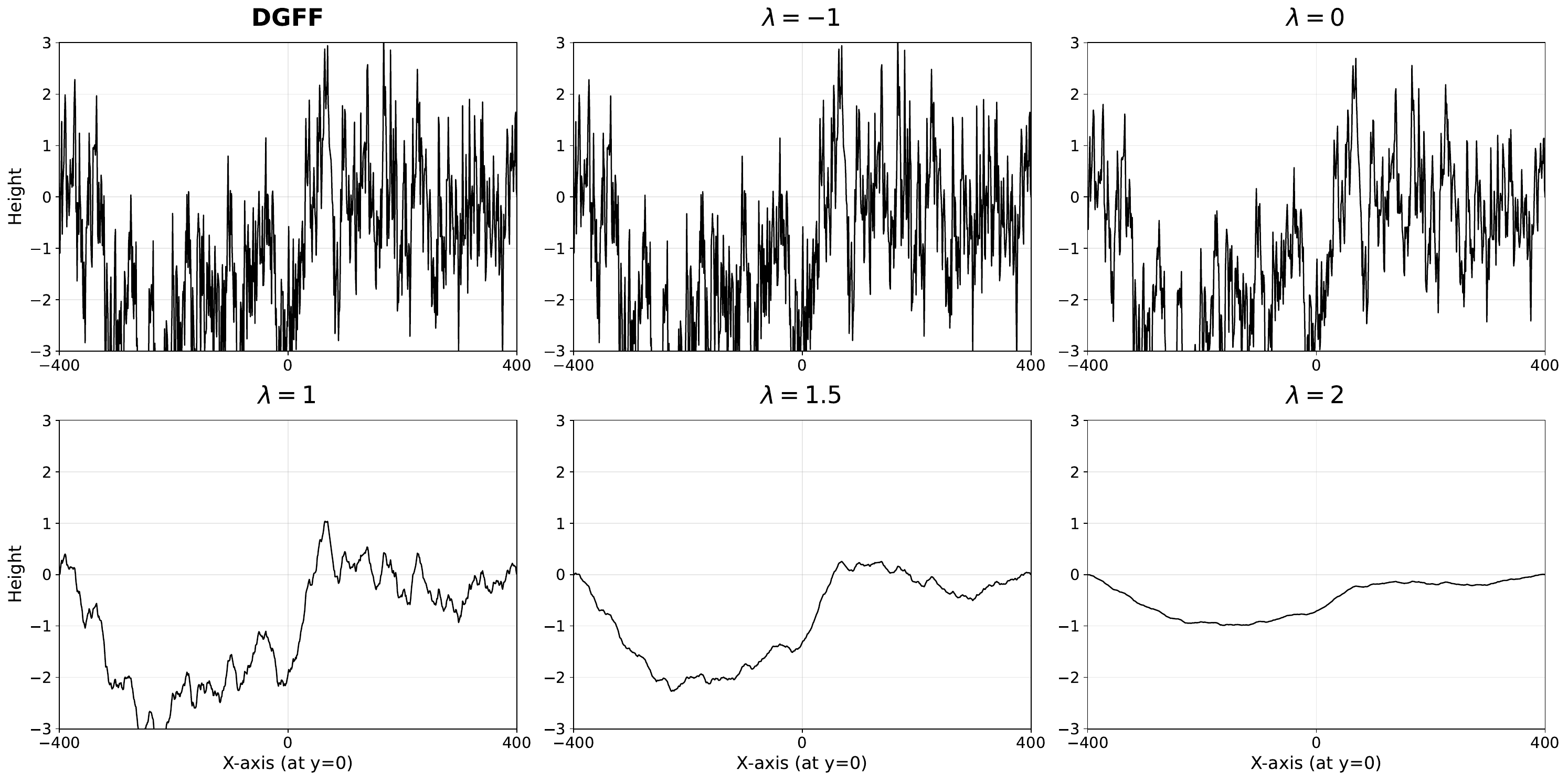}
        \caption{Height profiles along the $x$-axis, corresponding with the contour plots.}
    \end{subfigure}
    \caption{Visualized simulation results for $N=400$ with the same random seed.}
    \label{fig:simulation}
\end{figure}

\Cref{fig:simulation} displays the simulation results of this model with a fixed random seed and varying values of $\lambda\le 2$. As $\lambda$ approaches $2$ from below, the interface becomes visibly smoother, reflecting the increasing influence of the higher-order term. 
\begin{remark}
    The simulation results in \Cref{fig:simulation} are obtained by computing a Green function with modified boundary conditions, rather than $G_N$. This Green function approximates $G_N$ well, as the difference between them is negligible compared to the leading-order terms. We discuss this in more detail in \Cref{ch:strategy}. Moreover, this choice is also motivated by computational considerations, since it can be efficiently computed via the FFT (\emph{fast Fourier transform}).
\end{remark}

\subsection{Heuristic Motivation}\label{ch:heuristic}
\Cref{thm:compregime} and \Cref{thm:domregime} state that $\lambda=0$ and $\lambda=2$ are the critical thresholds. We illustrate this phenomenon heuristically.

For each $k=(k_1,\dots,k_d)\in\{1,\dots,2N+1\}^d$, the discrete Laplacian $-\Delta|_N$ has eigenvalues $\widehat{D}(k)$ and corresponding eigenfunctions $\varphi_k(x)$, given by
\begin{equation}\label{eq:eigen}
    \begin{aligned}
        \widehat{D}(k)&=\frac{2}{d}\sum_{j=1}^d \sin^2 \left(\frac{k_j \pi}{2((2N+1)+1)}\right),\\
        \varphi_k(x)&=\prod_{j=1}^d \sqrt{\frac{1}{N+1}}\sin\left(\frac{k_j \pi (x_j+N+1)}{(2N+1)+1}\right).
    \end{aligned}
\end{equation}
Hence, the eigenvalues are of polynomial order, ranging from $O(N^{-2})$ to $O(1)$. In particular, this suggests that $\lambda<0$ and $\lambda>2$ yield different regimes due to the scaling of the eigenvalues.

To illustrate the spectral mechanism, we temporarily assume that $\Delta^2|_N=(\Delta|_N)^2$. Since the two operators differ only near the boundary, this simplification captures the expected bulk behavior and motivates the following spectral calculation. Under this assumption, we examine the eigenvalue structure of $(-\Delta+\kappa\Delta^2)^{-1}\simeq (-\Delta)^{-1}(1-\kappa\Delta)^{-1}$.

If $\kappa\ll 1$ (equivalently, $\lambda<0$), then the eigenvalues of $-\kappa\Delta$ are at most $\kappa\ll1$. Hence, all eigenvalues of $(1-\kappa\Delta)^{-1}$ are approximately $1$. Then the only effective eigenvalues come from $(-\Delta)^{-1}$, and this implies that the variance structure is almost the same as that of the DGFF. 

Using the same heuristic argument, when $\kappa \gg N^2$ (equivalently, $\lambda > 2$), the smallest eigenvalue of $\kappa\Delta$ diverges. This suggests the approximation $(1 - \kappa\Delta)^{-1} \approx (-\kappa\Delta)^{-1}$, leading formally to a structure of the form $\kappa^{-1}(\Delta^2)^{-1}$, i.e., a rescaled membrane model.

In contrast, when $1\lesssim \kappa\lesssim N^2$ (equivalently, $\lambda\in[0,2]$), no single term dominates the whole spectrum. Instead, we consider a partial fraction decomposition
\[(-\Delta+\kappa\Delta^2)^{-1}=\frac{\kappa}{(-\kappa\Delta)(1-\kappa\Delta)}=\frac{1}{-\Delta}-\frac{1}{\kappa^{-1}-\Delta}.\]
This decomposition shows a competition structure between $(-\Delta)^{-1}$ and $(1-\kappa\Delta)^{-1}$. We therefore refer to this as the \emph{competition regime}. The behavior of $-\Delta$ is well understood through the DGFF, and we can use an approach from \autocite{michta2022asymptotic} for the massive Green function.

\begin{remark}
    The relation between $\|x-y\|$ and $N^{\lambda/2}$ comes from $(\kappa^{-1}-\Delta)^{-1}$ term in the decomposition. The fundamental solution to the screened poisson equation $(m^2-\Delta)u=f$ is given by the \emph{Yukawa potential}. In the physics literature, the parameter $m$ is interpreted as the mass. The Yukawa potential decays like $\exp(-m\|x\|)/\|x\|^\alpha$ for some $\alpha=\alpha(d)$, as $\|x\|\to\infty$. This potential is characterized by a correlation length $\xi\sim 1/m$; specifically, the solution exhibits exponential decay when $|x|\gg\xi$, whereas it behaves qualitatively differently when $|x|\ll\xi$. In our model, $N^{-\lambda/2}$ plays the role of the mass $m$, identifying $N^{\lambda/2}$ as the distance threshold for the correlation decay.
\end{remark}

\subsection{Proof Strategy}\label{ch:strategy}
In the heuristic section, we temporarily assumed $\Delta^2=\Delta^2|_N=(-\Delta|_N)^2$ and applied partial fraction decomposition. We introduce an auxiliary Green function for which this heuristic calculation is valid, and then compare it with the original problem.

Since all eigenvalues of $-\Delta|_N$ are positive, the matrix $-\Delta|_N+N^\lambda (-\Delta|_N)^2$ is invertible. We denote its inverse by $\bar{G}_N$. Equivalently, for each fixed $y\in V_N$, $\bar{G}_N$ solves 
\begin{equation}\label{eq:ss}
    \begin{aligned}
        (-\Delta|_N+N^\lambda (-\Delta|_N)^2)\bar{G}_N(x,y)&=\delta_y(x),&&\quad \forall x\in V_N,\\
        \bar{G}_N(x,y)&=0,&&\quad\forall x\in V_{N+1}\setminus V_N,\\
        \Delta \bar{G}_N(x,y)&=0,&&\quad \forall x\in\partial V_N,
    \end{aligned}
\end{equation}
where $\partial V_N$ denotes the outer boundary $\{x\notin V_N:\operatorname{dist}(x,V_N)=1\}$; see \Cref{ch:notation}. Here the condition $\Delta\bar G_N=0$ on $\partial V_N$ is understood
in the full-lattice sense: after imposing $\bar G_N=0$ on $V_{N+1}\setminus V_N$, the required two-step exterior values of $\bar G_N$ are chosen so that the full-lattice Laplacian vanishes at each boundary point. This full-stencil description is only a way to encode the operator $(-\Delta|_N)^2$.

Under \eqref{eq:ss}, the basis for \emph{discrete sine transform} (DST) given in \eqref{eq:eigen} diagonalizes the operators $-\Delta|_N$ and $(-\Delta|_N)^2$ simultaneously, so that the spectral calculation becomes rigorous.

In more detail, we use the following argument for the regime $\lambda\in[0,2]$: For each fixed $y\in V_N$, the sine transform $\widehat{\bar{G}}_N$ of the function $\bar{G}_N(\cdot,y)$ follows the identity
\begin{equation}\label{eq:partialfrac}
    \begin{gathered}
        \widehat{\bar{G}}_N(k)=\frac{\widehat{\delta}_y(k)}{\widehat{D}(k)+N^\lambda \widehat{D}(k)^2}=\widehat{\delta}_y(k) \left(\frac{1}{\widehat{D}(k)}-\frac{1}{N^{-\lambda}+\widehat{D}(k)}\right),\\
        \bar{G}_N(x,y)=(-\Delta|_N)^{-1}(x,y)-(N^{-\lambda}-\Delta|_N)^{-1}(x,y).
    \end{gathered}
\end{equation}
The equations \eqref{eq:partialfrac} have the same structure as the one obtained in the heuristic discussion. Both terms are well studied. The first term is the covariance kernel of the DGFF and admits a classical random-walk representation. The second term admits a representation in terms of a simple random walk killed independently at each step with probability $N^{-\lambda}/(1+N^{-\lambda})$. We discuss random walks with killing rate in \Cref{ch:randomwalk}. After analyzing $\bar{G}_N$ via \eqref{eq:partialfrac}, we can derive the estimate
\[\sup_{x,y\in V_N^\delta}|\bar{G}_N(x,y)-G_N(x,y)|\le C_{\delta,\lambda},\]
using a Sobolev embedding argument similar to that used in \autocite{kurt2009maximum}.

In the regime $\lambda<0$, we use the fact that the operators $\bar G_N$ and $(-\Delta|_N)^2$ share the same eigenbasis given in \eqref{eq:eigen}. We estimate $\bar{G}_N$ first by using this spectral observation. After estimating $\bar{G}_N$, we control the pointwise difference of two Green functions $G_N(x,y)$ and $\bar{G}_N(x,y)$ by the diagonal variance scale $\bar{G}_N(x,x)$, uniformly for $x,y\in V_N^\delta$. This proof is technically simpler than that of the case $\lambda\in[0,2]$, since it avoids the more delicate analytic tools, such as Sobolev embedding. It uses the resolvent identity and the standard estimate of $(-\Delta_N)^{-1}(x,y)$.

In contrast, we do not use the auxiliary Green function $\bar{G}_N$ when $\lambda>2$. Such a comparison between $G_N$ and $\bar{G}_N$ would require complicated componentwise estimates for $\Delta^2|_N$, and this comparison is unnecessary in this regime. Instead, we directly compare $G_N$ and $N^{-\lambda}G^\Delta_N$. Here the key ingredients are the Neumann series representation for the matrix $G_N$, together with estimates for $G^\Delta_N=(\Delta^2|_N)^{-1}$ obtained in \autocite{muller2019estimates}.

\subsection{Further Directions}\label{ch:futurework}
We outline several possible directions for future research. These can be broadly divided according to the behavior of the parameter $\lambda$: the global regime with fixed $\lambda$, the near-critical regime where $\lambda$ depends on $N$, and the critical regime $\lambda = 2$.

\paragraph{Global regime: behavior of the maximum} The author of \autocite{kurt2009maximum} established the behavior of the maximum and entropic repulsion for MM, following the method of \autocite{bolthausen2001entropic} and using the additional estimate $\sup_{x\in V_N^\delta}|\nabla \operatorname{Var}(\phi_x)|=O(N^{-1})$. If a similar estimate can be derived, then the behavior of the maximum of our model would follow from these arguments.

\paragraph{Near-critical regime: critical window} We characterize the full asymptotic structure of the model for each \emph{fixed} $\lambda\in\mathbb{R}$, and the values $\lambda=0$ and $\lambda=2$ correspond to the critical thresholds. A natural question is to understand the near-critical behavior, where $\lambda_N=\lambda+\varepsilon_N$ depends on $N$ and $\varepsilon_N\to0$. In particular, identifying the appropriate decay rate of $\varepsilon_N$ is of central interest.

\paragraph{Critical regime: infinite-volume Gibbs measure} For the critical mass regime $\lambda=2$, the variance is bounded. It is therefore natural to ask whether the infinite-volume Gibbs measure exists. In \autocite{kurt2008entropic, sakagawa2003entropic}, sufficient conditions are given for the existence of infinite-volume limit when the Hamiltonian is given by a polynomial in the Laplacian. However, these results are not directly applicable, since here the coefficients depend on $N$ and the required dimensional condition is not satisfied. A careful analysis of the corresponding DLR equations would be required.

\subsection{Notation}\label{ch:notation}
In this section, we introduce the notation used throughout the paper. We introduce the notation for general $d\ge1$, although our main focus is the case $d=2$.

\paragraph{General setting}
Throughout the paper, $c,C,\dots>0$ denote generic constants whose values may vary from line to line. They may depend on some fixed parameters, but they never depend on $N$, $x$, and $y$ unless explicitly stated otherwise. 

\paragraph{Green functions}
In the rest of the paper, we denote $\Delta_N$ and $\Delta^2_N$ for the restricted operators, instead of $\Delta|_N$ and $\Delta^2|_N$. The notation $G_N(x,y)$ and $\bar{G}_N(x,y)$ denote the Green functions associated with \eqref{eq:cl} and \eqref{eq:ss}, respectively. Additional notation related to Green functions will be introduced in \Cref{ch:randomwalk}. Recall that $\Delta_N^2$ denotes the restriction of the whole-space bi-Laplacian, and is not the square of $\Delta_N$.

Whenever no confusion can arise, we identify covariance kernels with the corresponding Green functions. In particular,
\[\operatorname{Cov}(\phi_x,\phi_y)=G_N(x,y),\]
and analogous identifications are used for the DGFF and MM covariance kernels.

\paragraph{Lattice objects}
For $x\in\mathbb{Z}^d$, the notation $\|x\|$ denotes the Euclidean norm. Sometimes we use $\|x\|_1$ to denote its $\ell^1$ norm. Note that the metric induced by $\|\cdot\|_1$ is the graph distance on $\mathbb{Z}^d$, denoted as $\operatorname{dist}(\cdot,\cdot)$.

Throughout, the boundary of a region is understood as the outer boundary layer. For $k\in\mathbb{N}$, we define the outer boundary 
\[\partial_k V_N\coloneq \{x\notin V_N:\operatorname{dist}(x, V_N)\le k\}.\]
We sometimes use the inner boundary layer
\[\partial_k^- V_N\coloneq \{x\in V_N:\operatorname{dist}(x, V_N^c)\le k\}.\]
If $k=1$, then we simply write $\partial V_N$ and $\partial^- V_N$.

We are mainly interested in the behavior away from the boundary. For a constant $\delta\in(0,1)$, we focus on the set $V_N^\delta\coloneq V_{\lfloor(1-\delta)N\rfloor}$. It is referred to as the $\delta$-bulk, or simply the bulk.

For a directed edge $e$ in $\mathbb{Z}^d$ (i.e., $e$ is a unit vector in $\mathbb{Z}^d$), $\nabla_e f(x)\coloneq f(x+e)-f(x)$ denotes the discrete derivative in direction $e$, and
\begin{gather*}
    \nabla f\cdot \nabla g=\nabla f(x)\cdot \nabla g(x)\coloneq \frac{1}{2d}\sum_e \nabla_e f(x)\nabla_e g(x),\\
    \|\nabla f\|_{L^2(\Lambda)}\coloneq \left(\frac{1}{2d}\sum_e\sum_{x\in\Lambda}(\nabla_e f(x))^2\right)^{1/2}.
\end{gather*}
The normalization factor $1/2d$ ensures a convenient summation-by-parts formula. We also use higher-order finite differences $\nabla^\alpha$, where $\alpha$ is a multiindex. To unify forward and backward differences, let $\mathcal{I}_d\coloneq \{\pm1,\dots,\pm d\}$, and $e_{-j}\coloneq -e_j$. For $\alpha=(\alpha_j)_{j\in\mathcal{I}_d}\in\mathbb{N}^{\mathcal{I}_d}$ (with $\mathbb{N}=\{0,1,2,\dots\}$), define
\[\nabla^\alpha\coloneq \prod_{j\in\mathcal{I}_d} \nabla_{e_j}^{\alpha_j}.\]

\paragraph{Asymptotic notation}
Let $A$ and $B$ be nonzero functions of $N$ (and possibly other variables).
We write $A\ll B$, $B\gg A$ or equivalently $A=o(B)$, if $|A/B|\to 0$. 
If $\limsup |A/B| <\infty$, we write $A=O(B)$, $A\lesssim B$, or $B\gtrsim A$.
If $A=O(B)$ and $B=O(A)$, then we write $A\asymp B$. 
Moreover, if $\lim A/B=1$, we write $A\sim B$ (or $A\simeq B$). 

Unless otherwise specified, asymptotic notation with respect to $N$ is understood to be uniform in $x \in V_N^\delta$, for a fixed $\delta\in(0,1)$. When relevant, it is also uniform in $y \in V_N^\delta$.
    % !TEX root = ../main.tex
\section{Random Walks and Green Functions}\label{ch:randomwalk}
In this section, we briefly summarize some properties of random walks, which yield useful tools for the analysis of random interfaces. For more detail, see \autocite[Chapter 8]{friedli2017statistical} and \autocite{lawler2013intersections}.

\subsection{Random Walks and Random Interfaces}\label{ch:RWintro}
It is well known that random walks are closely related to random interface models such as the DGFF. The Gaussian free field on a finite subset $\Lambda\subset\mathbb{Z}^d$ with zero boundary condition is a centered Gaussian field with covariance matrix $(-\Delta_\Lambda)^{-1}$. That is, the covariance matrix $G^0_\Lambda$ of GFF satisfies
\begin{align*}
    (-\Delta_\Lambda)G^0_\Lambda(x,y)&=\delta_y(x), x\in\Lambda,\\
    G^0_\Lambda(x,y)&=0, x\in\partial\Lambda.
\end{align*}

Let $S_n=S_0+X_1+\dots+X_n$ be the symmetric random walk on $\mathbb{Z}^d$, starting at the point $S_0\in \mathbb{Z}^d$. That is, each $X_j$ is independent and follows the uniform distribution on the set $\{\pm e_1,\dots,\pm e_d\}$. From the random walk viewpoint, the Green function $G^0_\Lambda(x,y)$ has the following representation 
\[G^0_\Lambda(x,y)=\mathbb{E}^x\left[\sum_{n=0}^{\tau_\Lambda-1}\mathbbm{1}\{S_n=y\}\right],\]
where $\tau_\Lambda:=\inf\{n:S_n\notin\Lambda\}$ is the first exit time from $\Lambda$.

A useful generalization is the massive Laplacian operator $m^2-\Delta_\Lambda$, and its Green function $G^m_\Lambda$ admits a representation in terms of a symmetric random walk $\tilde{S}_n$ with independent killing rate $m^2/(1+m^2)$. 
That is, the random walk $\tilde{S}_n$ has a ``cemetery'' state $\dagger$, and for each step, the random walk $\tilde{S}_n$ jumps to $\dagger$ with probability $m^2/(1+m^2)$, independently of the past. Once it enters the state $\dagger$, it stays there forever. In other words,
\[\mathbb{P}(\tilde{S}_{n+1}=\dagger|\tilde{S}_n\in\mathbb{Z}^d)=\frac{m^2}{1+m^2},\quad \mathbb{P}(\tilde{S}_{n+1}=\dagger|\tilde{S}_n=\dagger)=1.\]
For computational convenience, we define $G^m(\dagger,\cdot)=G^m_\Lambda(\dagger,\cdot)=0$. In many physical contexts, $m$ is interpreted as ``mass''. The Green function $G^m$ also has a similar expectation formula
\[G^m_\Lambda(x,y)=\frac{1}{1+m^2}\mathbb{E}^x\left[\sum_{n=0}^{\tilde{\tau}_\Lambda-1}\mathbbm{1}\{\tilde{S}_n=y\}\right],\]
where $\tilde{\tau}_\Lambda:=\inf\{n:\tilde{S}_n\in\mathbb{Z}^d\setminus\Lambda\}$ still denotes the first exit time \emph{before the walk is killed}.

Using the standard simple random walk $S_n$, the Green function $G^m$ for the operator $m^2-\Delta_\Lambda$ can be written as
\begin{equation}\label{eq:massiveboundeddomain}
    G^m_\Lambda(x,y)=\mathbb{E}^x\left[\sum_{n=0}^{\tau_\Lambda-1}(1+m^2)^{-(n+1)}\mathbbm{1}\{S_n=y\}\right],
\end{equation}
where $\tau_\Lambda$ is the exit time of the standard random walk.

When $\Lambda\uparrow\mathbb{Z}^d$, the existence of the infinite-volume limit of the massless Green function depends on the dimension $d$. If $d\ge3$, the limit exists, and the walk is said to be transient. When $d<3$, however, the limit does not converge, and the walk is recurrent. That is, the random walk visits every point infinitely many times. To avoid this issue, one considers the potential kernel
\begin{equation}\label{eq:potker}
    \mathfrak{a}(x):=\lim_{\Lambda\uparrow\mathbb{Z}^d}[G_\Lambda(x)-G_\Lambda(0)],
\end{equation}
which does converge. In many contexts, the potential kernel is defined by the limit of $G_\Lambda(0)-G_\Lambda(x)$. In this paper, however, we adopt the opposite sign convention so that the transient and recurrent cases have the same formal expression. On the other hand, the Green function $G^m$ with the constant mass $m>0$ always admits an infinite-volume limit
\begin{equation}\label{eq:massivewholespace}
    G^m(x,y)=\mathbb{E}^x\left[\sum_{n=0}^{\infty}\mathbbm{1}\{\tilde{S}_n=y\}\right]=\mathbb{E}^x\left[\sum_{n=0}^{\infty}(1+m^2)^{-(n+1)} \mathbbm{1}\{S_n=y\}\right],
\end{equation}
regardless of the dimension.

\begin{remark}
    An additional survival rate factor appears in \eqref{eq:massiveboundeddomain} and \eqref{eq:massivewholespace}. This discrepancy comes from different conventions for the term ``Green function''. In the random walk literature (e.g., \autocite{lawler2013intersections}), the Green function usually refers to the expected number of visits of the random walk. In contrast, in the literature on random interfaces (e.g., \autocite{friedli2017statistical}), the Green function is typically defined as the covariance kernel of the field. These two conventions coincide only when the survival rate equals $1$. The derivation of the additional factor is included in \autocite{friedli2017statistical}. In this paper, we follow the latter convention.
\end{remark}
\paragraph{Notational remarks for Green functions}
In the rest of this paper, we refer to the Green function of $-\Delta$ or $-\Delta_\Lambda$ as the \emph{massless Green function}, and similarly to the Green function of $(m^2-\Delta)$ and $(m^2-\Delta_\Lambda)$ as the \emph{massive Green function}. 

Sometimes we use the superscript $i$ to denote both massless and massive objects simultaneously. For example, $G^i_\Lambda$ means the Green function on $\Lambda$ corresponding to either the massless or massive Laplacian. If $\Lambda=V_N$, we simply write the subscript $N$ instead of $V_N$.

We further write the survival rate $(1+m^2)^{-1}$ as $\rho$. Although $\rho$ depends on $m$, we suppress this dependence whenever it is clear from the context.

\subsection{Harmonic Measure Representation of Green Functions}\label{ch:rwrep}
The massless Green function $G^0_\Lambda$ admits the following decomposition
\[G^0_\Lambda(x,y)=G^0(x,y)-\sum_{z\in\partial\Lambda}H^0_\Lambda(x,z)G^0(z,y),\]
or, by symmetry, we can interchange the order of $x$ and $y$. Here,
\[H^0_\Lambda(x,z)\coloneq \mathbb{P}^x (S_{\tau_\Lambda}=z)=\mathbb{E}^x[\mathbbm{1}\{S_{\tau_\Lambda}=z\}],\quad \tau_\Lambda:=\inf\{n:S_n\notin \Lambda\}.\]
From its definition, clearly it holds that
\[H^0_\Lambda(x,z)\ge0,\quad \sum_{z\in\partial \Lambda}H^0_\Lambda(x,z)=1,\quad\forall x\in \Lambda.\]
In other words, $H^0_\Lambda(x,\cdot)$ is a probability density on $\partial \Lambda$.

The massive Green function $G^m_N$ also admits an analogous decomposition
\[G^m_\Lambda(x,y)=G^m(x,y)-\sum_{z\in\partial\Lambda}H^m_\Lambda(x,z)G^m(z,y),\]
where $H^m_\Lambda(x,z)$ denotes the probability that a simple random walk with killing rate starting at $x$ exits $\Lambda$ at the point $z$, before it is killed. With the standard random walk notation, it coincides with
\begin{equation}\label{eq:massiveboundary}
    H^m_\Lambda(x,z)=\mathbb{E}^x[\rho^{\tau_\Lambda}\mathbbm{1}\{S_{\tau_\Lambda}=z\}],
\end{equation}
where $\rho$ denotes the survival rate. From its definition, the sum of $H^m_\Lambda$ over all $z\in\partial \Lambda$ is less than $1$ unless $\rho=1$. The proof for the massive case is analogous to that of the massless case. See the proof for the massless case, e.g., \autocite{lawler2013intersections}. 

In summary, we have
\begin{equation}\label{eq:GreenDecomp}
    G^i_\Lambda(x,y)=G^i(x,y)-\sum_{z\in\partial\Lambda}H^i_\Lambda(x,z)G^i(z,y).
\end{equation}

One of the benefits of this random walk representation is that we can easily define a (massive) harmonic extension of the Green function $G^i_\Lambda$ on the whole space $\mathbb{Z}^d$. Although harmonic extensions are generally not unique, the probabilistic representation selects a canonical one.

For $x\notin \Lambda$, let $S_n$ be a symmetric random walk starting at $x$. We can still define $\tau_\Lambda$ as 
\[\tau_\Lambda=\inf \{n:S_n\in\partial \Lambda\}.\]
This representation naturally extends the Green function to the whole space $\mathbb{Z}^d$. The whole-space Green function is harmonic. In the recurrent massless case ($d\le2$), one instead uses the potential kernel. Therefore, \eqref{eq:GreenDecomp} defines a canonical harmonic extension. See \autocite[Chapter 1]{lawler2013intersections} for background on harmonic functions.

\subsection{Estimates for the Massive Green Functions}
We collect several estimates for the massive Green function in different asymptotic regimes of $m\|x\|$. In \Cref{thm:compregime}, we distinguish the estimates for the covariance according to the relative size of $\|x-y\|$ and $N^{\lambda/2}$. Accordingly, we classify the estimates according to the asymptotic behavior of $m\|x\|$.

We summarize the estimates below. Throughout this subsection, we assume $d=2$. The proof for \Cref{thm:massiveanalysis} is discussed in Appendix \ref{section:massiveanalysis}.
\begin{proposition}[Uniform estimate for massive Green functions]\label[proposition]{thm:massiveanalysis}
    For a fixed $M>0$, let $0<m\le M$ be a mass parameter. Suppose $x\in\mathbb{Z}^2$ satisfies $\|x\|\to\infty$. Let $G^m$ be the massive Green function with mass $m$. Then we have the following estimates:
    \begin{enumerate}[label=(\alph*), ref=(\alph*)]
        \item \label{cond:subcrit}
        If $m\|x\|\to\infty$, then there exists $c,C>0$, depending only on $M$, such that
        \[G^m(x,0)\le C (m\|x\|)^{-1/2}e^{-cm\|x\|}.\]
        
        \item \label{cond:crit}
        If $m\|x\|\to s>0$ with $\|x\|\to\infty$, then for any multiindex $\alpha$ including $\alpha=0$,
        \[|\nabla^\alpha G^m(x,0)|\lesssim \|x\|^{-|\alpha|}.\]
        
        \item \label{cond:supercrit}
        If $m\|x\|\to 0$, then 
        \[|G^m(x,0)-G^m(0,0)-\mathfrak{a}(x)|=o(1).\]
        Moreover, the asymptotic \ref{cond:supercrit} remains valid without the assumption $\|x\|\to\infty$.
    \end{enumerate}
\end{proposition}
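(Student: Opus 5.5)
We represent $G^m(x,0)$ through its Fourier integral on the torus $[-\pi,\pi]^2$,
\[G^m(x,0)=\frac{1}{(2\pi)^2}\int_{[-\pi,\pi]^2}\frac{e^{i\theta\cdot x}}{m^2+\widehat{D}(\theta)}\,\mathrm{d}\theta,\qquad \widehat{D}(\theta)=\tfrac{1}{2}\bigl(2-\cos\theta_1-\cos\theta_2\bigr),\]
which follows from \eqref{eq:massivewholespace} by summing the geometric series of the random walk characteristic function $\bigl(\tfrac12(\cos\theta_1+\cos\theta_2)\bigr)^n$. Near $\theta=0$ we have $\widehat{D}(\theta)\approx\|\theta\|^2/4$, so the kernel behaves like the continuum Yukawa kernel with effective mass of order $m$. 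Each of the three regimes is then obtained by comparing with this continuum model, or by deforming the contour.

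For \ref{cond:subcrit}, we use the standard complex-shift argument. Assume $x=(x_1,x_2)$ with $|x_1|\ge\|x\|/\sqrt2$, and shift $\theta_1\mapsto\theta_1+i\eta\,\mathrm{sgn}(x_1)$ with $\eta=c_0m$. Periodicity in $\theta_1$ makes the vertical sides cancel, and the integrand acquires the factor $e^{-\eta|x_1|}\le e^{-c\,m\|x\|}$. For $c_0$ small depending on $M$, the shifted denominator has real part bounded below by $\tfrac12(m^2+\widehat{D}(\theta))$. This gives $G^m(x,0)\lesssim e^{-cm\|x\|}\int(m^2+\|\theta\|^2)^{-1}\,\mathrm{d}\theta\lesssim e^{-cm\|x\|}\log(1/m)$. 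To obtain the prefactor $(m\|x\|)^{-1/2}$ instead of $\log(1/m)$, we can use a slightly smaller $c$: since $m\|x\|\to\infty$, the extra factor $e^{-c'm\|x\|}$ absorbs $\log(1/m)$ only if $m$ is not too small relative to $\|x\|$. This is the delicate point. Our fallback is the random-walk route: bound \eqref{eq:massivewholespace} by $\sum_n\rho^{n+1}\mathbb{P}(S_n=x)$ and use the heat-kernel bound $\mathbb{P}(S_n=x)\lesssim n^{-1}e^{-c\|x\|^2/n}$ (valid for $n\ge\|x\|$; the probability vanishes for $n<\|x\|_1$). Comparing $\sum_n n^{-1}e^{-m^2n-\|x\|^2/n}$ with the Bessel function $K_0(2m\|x\|)\sim(m\|x\|)^{-1/2}e^{-2m\|x\|}$ then gives exactly the claimed form, with all constants uniform in $m\le M$.

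For \ref{cond:crit}, we bound $|\nabla^\alpha G^m(x,0)|$ with the same Fourier formula: $\nabla^\alpha$ multiplies the integrand by $\prod(e^{i\theta_j}-1)^{\alpha_j}=O(\|\theta\|^{|\alpha|})$. We write $\theta=u/\|x\|$ and apply the local (Gaussian) heat-kernel approximation on the scale $\|\theta\|\lesssim1/\|x\|$. On the complementary region we integrate by parts in $\theta$ along the direction of $x$, enough times to gain powers of $\|x\|^{-1}$. Since $m\asymp1/\|x\|$, the rescaled integrand is $\|x\|^{-|\alpha|}\,u^\alpha e^{iu\cdot\hat x}/(s^2+\|u\|^2/4)$, which is integrable after oscillation. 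This gives the bound $\|x\|^{-|\alpha|}$; for $\alpha=0$ it is $O(1)$, consistent with $K_0(s)$. The dyadic decomposition with integration by parts is routine but tedious.

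For \ref{cond:supercrit}, we write
\[G^m(x,0)-G^m(0,0)=\frac{1}{(2\pi)^2}\int\frac{e^{i\theta\cdot x}-1}{m^2+\widehat{D}}\,\mathrm{d}\theta.\]
By the sign convention \eqref{eq:potker}, $\mathfrak{a}(x)$ is the same integral with $m=0$. The difference is
\[-\frac{m^2}{(2\pi)^2}\int\frac{e^{i\theta\cdot x}-1}{\widehat{D}\,(m^2+\widehat{D})}\,\mathrm{d}\theta.\]
We bound $|e^{i\theta\cdot x}-1|\le\min(2,\|\theta\|\|x\|)$ and split at $\|\theta\|=m$ and $\|\theta\|=1/\|x\|$. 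The region $\|\theta\|<m$ contributes $\lesssim m^2\int_{\|\theta\|<m}\|x\|\|\theta\|^{-3}\,\mathrm{d}\theta$ truncated appropriately, of order $m\|x\|$. The region $m<\|\theta\|<1/\|x\|$ contributes $\lesssim m^2\|x\|/m=m\|x\|$. The region $\|\theta\|>1/\|x\|$ contributes $\lesssim m^2\|x\|^2$. All three are $o(1)$ when $m\|x\|\to0$, and no assumption $\|x\|\to\infty$ was used. The main obstacle overall is getting the precise $(m\|x\|)^{-1/2}$ prefactor uniformly in (a), where we rely on the heat-kernel/Bessel comparison.
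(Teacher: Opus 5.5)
Your proposal is correct and follows essentially the paper's route. For (a) you use the Gaussian heat-kernel bound together with $\rho^{n}\le e^{-cm^2n}$ and compare with $K_0$; you are right to abandon the complex-shift attempt, since $e^{-cm\|x\|}\log(1/m)$ cannot give the $(m\|x\|)^{-1/2}$ prefactor uniformly. For (b) you use the Fourier representation, split near the origin, with oscillation and integration by parts away from it; the paper treats $\alpha=0$ instead by splitting the heat-kernel sum at $n=\|x\|^2$. For (c) you use the Fourier integral of the difference of the massive and massless symbols, split at the scales $m$ and $1/\|x\|$.

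One small slip in (c): on $\{\|\theta\|<m\}$ the integral $\int_{\|\theta\|<m}\|\theta\|^{-3}\,\mathrm{d}\theta$ you wrote diverges in two dimensions. Use $m^2+\widehat{D}\ge m^2$ there instead, which gives $\|x\|\int_{\|\theta\|<m}\|\theta\|^{-1}\,\mathrm{d}\theta\asymp m\|x\|$.
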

    % !TEX root = ../main.tex
\section{Proof of \texorpdfstring{\Cref{thm:compregime}}{Theorem 1.1}}\label{section:proofcompregime}
We prove \Cref{thm:compregime}. The proof is divided into two parts. In the first part, we analyze $\bar{G}_N$ defined in \eqref{eq:ss}. In the second part, we show that $G_N$ is well approximated by $\bar{G}_N$, with the error controlled using a Sobolev embedding argument. 

\subsection{The simply supported problem}\label{ch:SSP}
We treat the long-range regime and the short-range regime separately, since different parts of \Cref{thm:massiveanalysis} are used in the two regimes.

\subsubsection{Long- and Intermediate-range Covariance}
Our goal is to prove the following proposition.
\begin{proposition}\label[proposition]{thm:sslongrange}
    Let $\lambda\in[0,2]$ and $\delta\in (0,1)$ be fixed. For any $x,y\in V_N^\delta$, if $\|x-y\|\gtrsim N^{\lambda/2}$, then 
    \begin{equation}\label{eq:sslongrange}
        \bar{G}_N(x,y)=G^0_N(x,y)+O(1),
    \end{equation}
    uniformly in $x,y\in V_N^\delta$. Moreover, the $O(1)$ error term improves to $o(1)$ if $\|x-y\|\gg N^{\lambda/2}$.
\end{proposition}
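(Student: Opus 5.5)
By the partial fraction identity \eqref{eq:partialfrac}, $\bar G_N(x,y)=G^0_N(x,y)-G^m_N(x,y)$ with mass $m=N^{-\lambda/2}$. The proposition is therefore equivalent to showing
\[0\le G^m_N(x,y)\le C\]
uniformly for $x,y\in V_N^\delta$ with $\|x-y\|\gtrsim N^{\lambda/2}$, and $G^m_N(x,y)=o(1)$ when $\|x-y\|\gg N^{\lambda/2}$. Positivity is clear from the random walk representation \eqref{eq:massiveboundeddomain}.

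For the upper bound I will use domain monotonicity. In \eqref{eq:massiveboundeddomain} the sum runs up to $\tau_N-1$, and removing this cutoff only adds nonnegative terms. Comparing with \eqref{eq:massivewholespace} gives
\[G^m_N(x,y)\le G^m(x,y)=G^m(x-y,0).\]
Equivalently, the subtracted boundary term in \eqref{eq:GreenDecomp} is nonnegative. This removes the boundary entirely, so the bulk restriction $x,y\in V_N^\delta$ is not even needed for this step. Note that $m\le 1$ for $\lambda\ge0$, so \Cref{thm:massiveanalysis} applies with $M=1$.

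It remains to bound $G^m(z,0)$ for $z=x-y$ with $m\|z\|\gtrsim 1$. I will split into two cases.

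\emph{Case $m\|z\|\to\infty$.} Part \ref{cond:subcrit} of \Cref{thm:massiveanalysis} gives $G^m(z,0)\le C(m\|z\|)^{-1/2}e^{-cm\|z\|}=o(1)$. This yields the improved statement for $\|x-y\|\gg N^{\lambda/2}$.

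\emph{Case $m\|z\|$ bounded above and below.} Here I will argue by subsequences to obtain uniformity. Suppose the $O(1)$ bound failed; then there would be sequences along which $G^m(z,0)\to\infty$. Passing to a further subsequence, $m\|z\|\to s\in(0,\infty)$, or $m\|z\|\to\infty$. In the first situation part \ref{cond:crit} with $\alpha=0$ gives $G^m(z,0)\lesssim 1$, provided $\|z\|\to\infty$. That holds automatically when $\lambda>0$, since $\|z\|\gtrsim N^{\lambda/2}$. In the second situation part \ref{cond:subcrit} applies. Either way we get a contradiction.

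The main obstacle is the edge case $\lambda=0$. There $m=1$, and $\|z\|$ may stay bounded, even $z=0$, so \Cref{thm:massiveanalysis} does not apply directly. In that case I will simply use
\[G^m(z,0)\le G^m(0,0)=\sum_n\rho^{n+1}\mathbb{P}^0(S_n=0)\le\sum_n 2^{-(n+1)}\le 1,\]
where the first inequality holds because, for any $z$, $\sum_n\rho^{n+1}\mathbb{P}^z(S_n=0)\le\sum_n\rho^{n+1}\mathbb{P}^0(S_n=0)$ (the return probability $\mathbb{P}^0(S_{2n}=0)$ maximizes $\mathbb{P}(S_{2n}=w)$ over $w$). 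More generally, $G^m(z,0)\le G^m(0,0)\le \frac1{m^2}$ is useless for large masses' complement, so the subsequence argument via \ref{cond:crit} is still needed for $\lambda>0$. The uniformity bookkeeping in that argument is the only delicate point.
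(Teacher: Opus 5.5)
Your proof is correct, and it takes a different and somewhat simpler route than the paper's. The paper keeps the harmonic-measure decomposition \eqref{eq:GreenDecomp} explicit, writing $\bar G_N(x,y)=G^0_N(x,y)-G^m(x,y)+\sum_{z\in\partial V_N}H^m_N(x,z)G^m(z,y)$. It then bounds the two massive terms separately: the boundary sum by $\sup_{z\in\partial V_N}G^m(z,y)$, using $\|z-y\|\asymp N$ for $y$ in the bulk together with parts \ref{cond:subcrit} and \ref{cond:crit} of \Cref{thm:massiveanalysis}, and $G^m(x,y)$ ``by the same argument''.

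You instead note that the subtracted boundary term is nonnegative, i.e.\ $0\le G^m_N(x,y)\le G^m(x-y,0)$. So only the whole-space kernel at the single point $x-y$ needs to be controlled. This buys you three things:
\begin{enumerate}
    \item The bulk assumption is not needed for this step.
    \item You get the sign information $\bar G_N\le G^0_N$ for free; the paper invokes this separately in \eqref{eq:boundarydecompose}.
    \item You treat explicitly two points the paper glosses over. One is the conversion of the sequential statements of \Cref{thm:massiveanalysis} into a uniform bound, which is what your subsequence argument does. The other is the case $\lambda=0$ with bounded $\|x-y\|$, where \Cref{thm:massiveanalysis} does not apply because it requires $\|x\|\to\infty$.
\end{enumerate}
The paper's formulation, on the other hand, keeps the boundary term visible. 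That is the structure it needs anyway in \Cref{thm:ssshortrange}, where the massless boundary term is exactly what produces the $\frac{2}{\pi}\log N$.

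One small correction in your $\lambda=0$ case. The termwise comparison you cite fails at odd times: $\mathbb{P}^0(S_{2n+1}=0)=0$, while $\mathbb{P}^z(S_{2n+1}=0)$ can be positive. The inequality $G^m(z,0)\le G^m(0,0)$ is still true, by the strong Markov property at the hitting time of $0$. But you do not need it, since for $m=1$ you have directly $G^m(z,0)\le\sum_{n\ge0}\rho^{n+1}=\rho/(1-\rho)=m^{-2}=1$.
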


We start with the decompositions \eqref{eq:partialfrac} and \eqref{eq:GreenDecomp}. We have 
\begin{equation}\label{eq:longrangedecomp}
    \bar{G}_N(x,y)=G^0_N(x,y)-G^m(x,y)+\sum_{z\in\partial V_N}H^m_N(y,z)G^m(z,x).
\end{equation}

For convenience, we write the boundary term in \eqref{eq:longrangedecomp} as
\[\sum_{z\in\partial V_N}H^m_N(x,z)G^m(z,y),\]
which is possible due to the symmetry of $G^m$. It has the following estimate:
\begin{equation}
    \begin{aligned}
        \left|\sum_{z\in\partial V_N}H_N^m(x,z)G^m(z,y)\right|&\le \sum_{z\in\partial V_N}H_N^m(x,z)|G^m(z,y)|\\
        &\le \bigl(\sup_{z\in\partial V_N}|G^m(z,y)|\bigr) \sum_{z\in\partial V_N}H_N^m(x,z)\\
        &\overset{\mathclap{\eqref{eq:massiveboundary}}}{\le} \sup_{z\in \partial V_N}|G^m(z,y)|.
    \end{aligned}
\end{equation}
To apply \Cref{thm:massiveanalysis}, we consider the mass and the distance scale. In our case, the mass $m$ corresponds to $N^{-\lambda/2}$, which vanishes as $N\to\infty$. Since we fix $y\in V_N^\delta$, the estimate $\|z-y\|\asymp N$ holds uniformly in $\partial V_N$. If $\lambda\in[0,2)$ or $\lambda=2$, then we apply \Cref{thm:massiveanalysis} \ref{cond:subcrit} or \ref{thm:massiveanalysis} \ref{cond:crit}, respectively, and therefore
\begin{equation}
    \sum_{z\in\partial V_N}H_N^m(x,z)G^m(z,y)=\begin{cases}
        o(1)&\lambda\in[0,2),\\
        O(1)&\lambda=2.
    \end{cases}
\end{equation}
Under the assumption $\|x-y\|\gtrsim N^{\lambda/2}$, the same argument shows that $G^m(x,y)$ is uniformly bounded, and hence can be absorbed into the $O(1)$ error term. We get
\[\bar{G}_N(x,y)=G^0_N(x,y)+O(1),\]
where $O(1)$ is uniform in $x,y\in V_N^\delta$ and $N$.

\subsubsection{Short-range Covariance for Two Dimensions}\label{ch:shortrange}
To avoid ambiguity, here we use the potential kernel notation $\mathfrak{a}(x-y)$ instead of $G^0(x,y)$. The goal of this section is to show the following proposition.
\begin{proposition}\label[proposition]{thm:ssshortrange}
    Let $\lambda\in[0,2]$ and $\delta\in (0,1)$ be fixed. For any $x,y\in V_N^\delta$, if $\|x-y\|\ll N^{\lambda/2}$, then
    \begin{equation}\label{eq:ssshortrange}
        \bar{G}_N(x,y)=\frac{2-\lambda}{\pi}\log N+O(1).
    \end{equation}
\end{proposition}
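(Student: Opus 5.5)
Starting from \eqref{eq:partialfrac} and \eqref{eq:GreenDecomp}, with $m=N^{-\lambda/2}$ and $\rho=(1+m^2)^{-1}$, I will write
\[
\bar G_N(x,y)=\Bigl[G^0_N(x,y)-\bigl(-\mathfrak{a}(x-y)\bigr)\Bigr]\;+\;\Bigl[-\mathfrak{a}(x-y)-G^m(x,y)\Bigr]\;+\;\sum_{z\in\partial V_N}H^m_N(x,z)G^m(z,y).
\]
With the sign convention \eqref{eq:potker}, $\mathfrak{a}(x)\approx-\frac{2}{\pi}\log\|x\|$ and $G^0_N(x,y)=-\mathfrak{a}(x-y)+\sum_z H^0_N(x,z)\mathfrak{a}(z-y)$. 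I will handle the three brackets separately.

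\emph{Boundary term.} As in the proof of \Cref{thm:sslongrange}, the massive harmonic-measure term is bounded by $\sup_{z\in\partial V_N}G^m(z,y)$. Since $\|z-y\|\asymp N$ and $mN=N^{1-\lambda/2}$, \Cref{thm:massiveanalysis}\ref{cond:subcrit} gives $o(1)$ for $\lambda<2$, and \ref{cond:crit} gives $O(1)$ for $\lambda=2$.

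\emph{Massless part.} The first bracket equals $\sum_z H^0_N(x,z)\mathfrak{a}(z-y)$. For $y$ in the bulk and $z\in\partial V_N$, we have $\mathfrak{a}(z-y)=-\frac{2}{\pi}\log N+O(1)$ uniformly, using the classical expansion $\mathfrak{a}(x)=-\frac{2}{\pi}\log\|x\|-\kappa_0+O(\|x\|^{-2})$. Since $H^0_N(x,\cdot)$ is a probability measure, the first bracket is $\frac{2}{\pi}\log\|x-y\|$-type cancelled. More precisely, combined with $-\mathfrak{a}(x-y)$, this reproduces \eqref{eq:bulkasymp}: $G^0_N(x,y)=\frac{2}{\pi}\log N-\frac{2}{\pi}\log(\|x-y\|+1)+O(1)$.

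\emph{Middle bracket.} Since $m\|x-y\|=\|x-y\|N^{-\lambda/2}\to0$, \Cref{thm:massiveanalysis}\ref{cond:supercrit} (valid even without $\|x-y\|\to\infty$) gives
\[
G^m(x,y)=G^m(0,0)+\mathfrak{a}(x-y)+o(1).
\]
Hence
\[
\bar G_N(x,y)=\frac{2}{\pi}\log N-\frac{2}{\pi}\log(\|x-y\|+1)+\mathfrak{a}(x-y)-G^m(0,0)+O(1).
\]
The $\|x-y\|$-dependence cancels because $\mathfrak{a}(x-y)=-\frac{2}{\pi}\log(\|x-y\|+1)+O(1)$ appears with the opposite sign inside $G^0_N$. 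So everything reduces to showing
\[
G^m(0,0)=\frac{2}{\pi}\log(1/m)+O(1)=\frac{\lambda}{\pi}\log N+O(1),
\]
which yields $\frac{2-\lambda}{\pi}\log N+O(1)$.

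\emph{Main obstacle: on-diagonal asymptotics of $G^m(0,0)$.} I expect this step to be the crux, since it is not listed among the estimates in \Cref{thm:massiveanalysis}. I would first try to derive it from the proposition itself: choose a point $w$ with $\|w\|\asymp 1/m$. Part \ref{cond:crit} gives $G^m(w,0)=O(1)$. On the other hand, applying \ref{cond:supercrit} along a slightly shorter scale $\|w\|=\varepsilon/m$ and letting the constants settle gives $G^m(0,0)=G^m(w,0)-\mathfrak{a}(w)+o(1)=\frac{2}{\pi}\log(1/m)+O(1)$. The only care needed is that \ref{cond:supercrit} requires $m\|w\|\to0$, whereas \ref{cond:crit} requires $m\|w\|\to s>0$.

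If matching these regimes is delicate, I would instead compute directly from \eqref{eq:massivewholespace}:
\[
G^m(0,0)=\sum_n\rho^{n+1}\,\mathbb{P}(S_n=0),\qquad \mathbb{P}(S_{2n}=0)\sim\frac{1}{\pi n}.
\]
Summing up to $n\approx m^{-2}$, beyond which $\rho^n$ decays geometrically, gives $\frac{1}{\pi}\log m^{-2}+O(1)=\frac{2}{\pi}\log(1/m)+O(1)$, as required. All error terms are uniform in $x,y\in V_N^\delta$, which completes the plan.
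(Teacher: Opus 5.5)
This is essentially the paper's argument: write $\bar G_N=G^0_N-G^m_N$, use \Cref{thm:massiveanalysis}\ref{cond:supercrit} to get $G^m(x,y)=G^m(0,0)+\mathfrak{a}(x-y)+o(1)$, and cancel this $\mathfrak{a}(x-y)$ against the same term in $G^0_N$. Then average $\mathfrak{a}(z-y)=-\frac{2}{\pi}\log N+O(1)$ against $H^0_N(x,\cdot)$, handle the massive boundary term by \ref{cond:subcrit}/\ref{cond:crit}, and finish with $G^m(0,0)=\frac{2}{\pi}|\log m|+O(1)$ from the local CLT, as in Appendix~\ref{ch:originvar}.

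\textbf{Sign bookkeeping.} Your signs are inconsistent with the convention \eqref{eq:potker} that you adopt. With $\mathfrak{a}(x)\approx-\frac{2}{\pi}\log\|x\|$, the correct identity is $G^0_N(x,y)=\mathfrak{a}(x-y)-\sum_z H^0_N(x,z)\mathfrak{a}(z-y)$; your version makes $G^0_N$ negative. Consequently your first bracket is really $2\mathfrak{a}(x-y)-\sum_z H^0_N(x,z)\mathfrak{a}(z-y)$. Likewise, your display for $\bar G_N$ must carry $-\mathfrak{a}(x-y)$. With the $+\mathfrak{a}(x-y)$ you wrote, the distance terms add up to $-\frac{4}{\pi}\log(\|x-y\|+1)$ instead of cancelling. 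Once the signs are fixed, the cancellation works exactly as in the paper.

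\textbf{The estimate of $G^m(0,0)$.} Your first route does not close from the statements of \ref{cond:crit} and \ref{cond:supercrit}, because they concern disjoint regimes ($m\|w\|\to s>0$ versus $m\|w\|\to0$). To make it work you would need the quantitative bound $O(m\|w\|)$ that the appendix proof of \ref{cond:supercrit} actually yields, applied at fixed $m\|w\|=s$. Your fallback is the right one, but it needs $p_{2n}(0)=\frac{1}{\pi n}+O(n^{-2})$ (or some other summable error), not merely $p_{2n}(0)\sim\frac{1}{\pi n}$. The latter only gives $G^m(0,0)=(1+o(1))\frac{2}{\pi}\log(1/m)$, not the $O(1)$ error the proposition requires.
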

Under the given assumption, the mass $m$ in \Cref{thm:massiveanalysis} corresponds to $N^{-\lambda/2}$. We use \Cref{thm:massiveanalysis} \ref{cond:supercrit} together with \eqref{eq:GreenDecomp}. Using \Cref{thm:massiveanalysis} \ref{cond:supercrit}, if $\|x-y\|\ll N^{\lambda/2}$, then
\begin{equation}\label{eq:shortmassivedecomp}
    \begin{aligned}
        G^m_N(x,y)&=G^m(x,y)-\sum_z H^m_N(x,z)G^m(z,y)\\
        &=G^m(x,y)-G^m(0,0)-\sum_z H^m_N(x,z)G^m(z,y)+G^m(0,0)\\
        &=\mathfrak{a}(x-y)-\sum_z H^m_N(x,z)G^m(z,y)+G^m(0,0)+o(1).
    \end{aligned}
\end{equation}
Subtracting \eqref{eq:shortmassivedecomp} from $G^0_N(x,y)=\mathfrak{a}(x-y)-\sum_z H^0_N(x,z)\mathfrak{a}(z-y)$, the term $\mathfrak{a}(x-y)$ cancels, namely,
\begin{equation}\label{eq:shortssdecomp}
    \begin{aligned}
        \bar{G}_N(x,y)=&G^0_N(x,y)-G^m_N(x,y)\\
        =&\sum_z [H^m_N(x,z)G^m(z,y)-H^0_N(x,z)\mathfrak{a}(z-y)]-G^m(0,0)+o(1).
    \end{aligned}
\end{equation}
We have $\|z-y\|\asymp N$ uniformly in $z\in\partial V_N$, and \Cref{thm:massiveanalysis} \ref{cond:subcrit} and \ref{thm:massiveanalysis} \ref{cond:crit} show that the massive boundary $\sum_z H^m_N(x,z)G^m(z,y)$ contributes $o(1)$ when $\lambda\in[0,2)$, or $O(1)$ when $\lambda=2$.

The contribution for the massless boundary $\sum_z H^0_N(x,z)\mathfrak{a}(z-y)$ is as follows: The potential kernel $\mathfrak{a}$ has the following asymptotic (see, e.g., \autocite{lawler2013intersections}):
\[\mathfrak{a}(x)=-\frac{2}{\pi}\log |x|+O(1),\quad \|x\|\to\infty.\]
The uniform asymptotic $\|z-y\|\asymp N$ yields
\begin{equation}\label{eq:ss_massiveboundary}
    \begin{aligned}
        -\sum_z H^0_N(x,z)\mathfrak{a}(z-y)&=-\sum_z H^0_N(x,z)\left(-\frac{2}{\pi}\log N+O(1)\right)\\
        &=\frac{2}{\pi}\log N+O(1),
    \end{aligned}
\end{equation}
where we used the property $\sum_z H^0_N(x,z)=1$. From \eqref{eq:shortssdecomp} and \eqref{eq:ss_massiveboundary}, we have
\[G^0_N(x,y)-G^m_N(x,y)=\frac{2}{\pi}\log N-G^m(0,0)+O(1).\]

The massive Green function with the mass $m\to0+$ has an asymptotic $G^m(0,0)\simeq \frac{2}{\pi}|\log m|$ (see, e.g., \autocite[Proposition 8.32]{friedli2017statistical}). It can be improved as $G^m(0,0)=\frac{2}{\pi}|\log m|+O(1)$; see Appendix \ref{ch:originvar}. In our case, the mass $m$ corresponds to $N^{-\lambda/2}$, and this yields the sharp competition structure
\[\bar{G}_N(x,y)=\frac{2-\lambda}{\pi}\log N+O(1),\]
which is the desired result in \Cref{thm:ssshortrange}.
    % !TEX root = ../main.tex
\subsection{Recovering the Boundary Condition}
We have analyzed $\bar{G}_N$ in the previous section. Now we compare $\bar{G}_N$ and $G_N$ by following the setup of \autocite{kurt2009maximum}. More precisely, the goal is to prove the following proposition.
\begin{proposition}\label[proposition]{thm:competecomparison}
    Let $d=2$, and fix $\lambda\in[0,2]$ and $\delta\in(0,1)$. Then the following asymptotic
    \begin{equation}
        \sup_{x,y\in V_N^\delta} |G_N(x,y)-\bar{G}_N(x,y)|=O(1)
    \end{equation}
    holds. 
\end{proposition}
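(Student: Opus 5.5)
We compare the two Green functions by following the strategy of \autocite{kurt2009maximum}. Fix $y\in V_N^\delta$ and set $u(x)\coloneq G_N(x,y)-\bar G_N(x,y)$ for $x\in\mathbb{Z}^2$, where both functions are extended by zero outside $V_N$ (and $\bar G_N$ by its two-step exterior values where needed). The operators $L_N=-\Delta_N+N^\lambda\Delta_N^2$ and $-\Delta_N+N^\lambda(-\Delta_N)^2$ agree on functions supported away from the boundary. They differ only through the full-stencil bi-Laplacian at points of $\partial_1^-V_N$, since $\Delta_N^2 f$ and $(\Delta_N)^2 f$ differ only at boundary-adjacent sites. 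Hence $u$ satisfies
\[
L_N u(x)=0 \quad\text{for } x\in V_N\setminus\partial_1^-V_N ,
\]
and $L_N u$ on the boundary layer is governed by the boundary data of $\bar G_N$. Equivalently, we write $u=G_N-\bar G_N$ and use the resolvent identity
\[
G_N-\bar G_N=-N^\lambda\, G_N\bigl(\Delta_N^2-(\Delta_N)^2\bigr)\bar G_N .
\]
The key structural fact is that $\bar G_N(\cdot,y)$ vanishes on $V_{N+1}\setminus V_N$, so the perturbation $\Delta_N^2-(\Delta_N)^2$ only sees the values of $\bar G_N(\cdot,y)$ and its normal difference at the inner boundary layer $\partial_1^-V_N$.

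\textbf{Step 1 (boundary smallness of $\bar G_N$).} Using \eqref{eq:partialfrac}, the representation \eqref{eq:GreenDecomp} and \Cref{thm:massiveanalysis}, show that for $y\in V_N^\delta$ and $z\in\partial_1^-V_N$,
\[
|\bar G_N(z,y)|\lesssim N^{-1}\cdot(\text{bounded factor}).
\]
This should follow from gradient estimates near the boundary for the harmonic and massive-harmonic parts. For $G^0_N$ this is the standard boundary decay of the Dirichlet Green function at distance $1$ from the boundary. For $G^m_N$ it follows from the same gradient estimate combined with \Cref{thm:massiveanalysis}. We expect a similar $N^{-1}$ bound for the discrete normal derivative.

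\textbf{Step 2 (energy/Sobolev step).} Let $w$ be the correction, viewed as the minimizer of the Dirichlet form
\[
\mathcal{E}(f)=\|\nabla f\|_{L^2}^2+N^\lambda\|\Delta f\|_{L^2}^2
\]
with boundary data prescribed by $\bar G_N$ on the two-layer boundary. Bound $\mathcal{E}(u)$ by the energy of an explicit competitor: take $\bar G_N(\cdot,y)$ multiplied by a cutoff $\chi$ that equals $1$ on $\partial_1^-V_N$ and vanishes outside an $O(1)$-neighbourhood of the boundary. By Step 1, the competitor has energy of order $N\cdot N^\lambda\cdot N^{-2}=N^{\lambda-1}$, since the number of boundary sites is $\asymp N$.

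\textbf{Step 3 (interior pointwise control).} Since $u$ satisfies $L_Nu=0$ in the interior, interior regularity in $V_N^{\delta/2}$, via a discrete Caccioppoli-type inequality plus a discrete Sobolev embedding $H^2\hookrightarrow L^\infty$ in $d=2$ applied on boxes of scale $N$, converts this energy into a pointwise bound in the bulk. Rescaling to the unit box, the $\Delta$-part gives
\[
\sup_{V_N^\delta}|u|\lesssim \Bigl(N^{-\lambda}\cdot N^{2}\cdot N^{\lambda-1}\cdot N^{-2}\Bigr)^{1/2}+\dots ,
\]
which is $O(1)$ (in fact small) after accounting for the scaling. It is essential that the mass $N^{-\lambda}$ lies in $[N^{-2},1]$, i.e.\ $\lambda\in[0,2]$, so that neither the gradient nor the bi-Laplacian term spoils the scaling. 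Uniformity in $x,y\in V_N^\delta$ is then immediate.

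\textbf{Main obstacle.} The hardest step is the interior regularity for the mixed operator with an $N$-dependent coefficient. Caccioppoli and Sobolev estimates must hold uniformly in the ratio $N^\lambda/N^2$. I would first try to work with the bi-Laplacian part alone, as in \autocite{kurt2009maximum}, absorbing the gradient term as a lower-order positive contribution. Only if that fails would I interpolate between the DGFF and MM estimates.
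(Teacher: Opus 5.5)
\textbf{The argument breaks down for $\lambda\in(1,2]$.} Your competitor $\chi\bar G_N(\cdot,y)$ has $\chi$ dropping from $1$ to $0$ within $O(1)$ of $\partial V_N$, so $|\Delta(\chi\bar G_N)|\asymp N^{-1}$ on $\asymp N$ sites. Step 2 therefore only gives $\mathcal{E}(u)\lesssim N^{\lambda-1}$, which is unbounded once $\lambda>1$.

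Step 3 cannot recover from this. The bulk level of $u$ is carried by a nearly harmonic component that $\Delta$ barely sees: a plateau rising from $0$ to height $c$ across a macroscopic layer costs energy only $\asymp c^2$ when $N^\lambda\le N^2$. Any pointwise bound must therefore go through $\|\nabla u\|_{L^2}$, or through $N^{-1}\|u\|_{L^2}\lesssim\|\nabla u\|_{L^2}$. The scale-invariant two-dimensional embedding on a box of side $N$ is $\|f\|_\infty\lesssim N^{-1}\|f\|_{L^2}+\|\nabla f\|_{L^2}+N\|\nabla^2 f\|_{L^2}$. With your energy bound, the best it yields is $\sup_{V_N^\delta}|u|\lesssim N^{(\lambda-1)/2}$, and energy $N^{\lambda-1}$ genuinely allows a plateau of that height.

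Your displayed bracket equals $N^{-1}$ only because of an unexplained factor $N^{-2}$. Globally, the $\Delta$-part is only bounded by $N\|\Delta u\|_{L^2}\lesssim N\cdot N^{-1/2}=N^{1/2}$. The gradient contribution, which is the one that fails, is hidden in the ``$+\dots$''. For the same reason, your fallback of keeping only the bi-Laplacian part and treating the gradient term as lower order is backwards: the gradient term is what pins $u$ down in the bulk. As written, the plan covers at most $\lambda\le 1$. The same threshold appears if you use the resolvent identity you wrote down, together with the diagonal structure of $\Delta_N^2-(\Delta_N)^2$ and Cauchy--Schwarz as in the paper's $\lambda<0$ argument; that gives exactly $O(N^{\lambda-1})$.

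\textbf{What is missing.} The clamped and simply supported solutions differ over the screening length $N^{\lambda/2}$, not over $O(1)$. The boundary data must therefore be extended over at least that width. The paper takes $f_0=\bar G_N(\cdot,y)$ on the whole strip outside $V_N^{\delta/2}$, so that $Lf_0=L\bar G_N=0$ there exactly. It then interpolates to $0$ across a bulk annulus of width $\asymp N$, where $|\nabla^\alpha\bar G_N|\lesssim N^{-|\alpha|}$ (\Cref{lemma:existence}). Then $U=H_N-f_0$ has clamped data and source $\|Lf_0\|_{L^2}\lesssim N^{-1}$, which gives $\|\nabla U\|_{L^2}=O(1)$ and $\|\Delta U\|_{L^2}=O(N^{-\lambda/2})$. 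A cutoff of width $N^{\lambda/2}$ in your competitor would likewise give $\mathcal{E}\lesssim N^{\lambda/2-1}\le 1$.

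Even with this fix, Step 3 is not routine. From the global bound on $\|\Delta U\|_{L^2}$ alone, the $H^2$ embedding gives only $N^{1-\lambda/2}$ (see the remark after \Cref{lemma:fracSobolev}). You therefore need interior regularity that is uniform in $N^\lambda\in[1,N^2]$. The paper supplies it through four ingredients: the cutoff $\eta$, commutator bounds on $[L,\eta]U$, the Caccioppoli estimate $\|\nabla\Delta U\|_{L^2(A)}\lesssim N^{-1-\lambda/2}$, and the $H^{1+\varepsilon}$ embedding. Your proposal names this step as the main obstacle but gives no mechanism for it.
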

In this section, we fix $\delta\in(0,1)$ unless it is mentioned.

\subsubsection{Norm Estimate of the Difference}
For each fixed $y\in V_N^\delta$, define $H_N(\cdot,y)=\bar{G}_N(\cdot,y)-G_N(\cdot,y)$. From the definition of $G_N$ and $\bar{G}_N$, $H_N$ satisfies 
\begin{equation*}
    \begin{aligned}
        L_NH_N(x,y)&=0 &&x\in V_N,\\
        H_N(x,y)&=\bar{G}_N(x,y) &&x\in\partial_2 V_N.
    \end{aligned}
\end{equation*}
From now on, we omit the subscript $N$ in $L_N$ if there is no ambiguity.

Let $f_0$ be a function satisfying $f_0(x)=\bar{G}_N(x,y)$ for $x\notin V_N^{\delta/2}$ and $f_0(x)=0$ on $V_N^{\delta'}$ for some $\delta/2<\delta'<\delta$. For $x\notin V_N$, the value $f_0(x)=\bar{G}_N(x,y)$ is understood as the canonical extension in the sense of \Cref{ch:rwrep}. We interpolate $f_0$ on $V_N^{\delta/2} \setminus V_N^\delta$ so that 
\begin{equation}\label{eq:interpolation}
    \sup_{x\in V_N^{\delta/2}} |\nabla^\alpha f_0(x)|\lesssim N^{-|\alpha|}
\end{equation}
for any nonzero multiindex $\alpha$. Such a function exists as a smooth interpolation between $0$ and $\bar{G}_N (\cdot,y)$. Here and below, the term \emph{smooth} means that
\[|\nabla^\alpha f| = O(N^{-|\alpha|})\]
for every nonzero multiindex $\alpha$.
\begin{lemma}\label[lemma]{lemma:existence}
    Let $0<\delta,\delta'<1$ be $\delta/2<\delta'<\delta$. There exists a function $f_0$ satisfying $f_0(x)=\bar{G}_N(x,y)$ for $x\notin V_N^{\delta/2}$, $f_0(x)=0$ on $V_N^{\delta'}$, and \eqref{eq:interpolation}.
\end{lemma}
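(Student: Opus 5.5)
The natural construction is a discrete cutoff. Set $f_0(x)=\chi(x)\,\bar G_N(x,y)$, where $\chi$ is the restriction to $\mathbb{Z}^2$ of a rescaled smooth bump. Fix $\psi\in C^\infty(\mathbb{R}^2)$ with $\psi\equiv0$ on $[-(1-\delta'),1-\delta']^2$, $\psi\equiv1$ outside $[-(1-\delta/2)+\eta,(1-\delta/2)-\eta]^2$ for a small $\eta>0$ chosen so that $\delta'<\delta/2+\eta$ fails nowhere (i.e.\ the transition annulus sits strictly inside $V_N^{\delta/2}\setminus V_N^{\delta'}$ after rounding), and put $\chi(x)=\psi(x/N)$. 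Then $f_0=\bar G_N(\cdot,y)$ off $V_N^{\delta/2}$ and $f_0=0$ on $V_N^{\delta'}$. Also, by Taylor's theorem, $\sup_x|\nabla^\alpha\chi(x)|\le C_\alpha N^{-|\alpha|}$ for every $\alpha$, since each discrete difference of a function of $x/N$ is bounded by $N^{-1}$ times a sup of a derivative of $\psi$.

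The key input is a derivative bound for $\bar G_N(\cdot,y)$ on the annulus $A=V_N^{\delta/2}\setminus V_N^{\delta'}$ (enlarged by a few lattice steps to accommodate finite stencils). Since $y\in V_N^\delta$ and $x\in A$, we have $\|x-y\|\ge cN$ with $c$ depending on $\delta-\delta'$, and $\operatorname{dist}(x,\partial V_N)\ge cN$. I would aim for
\[|\bar G_N(x,y)|\lesssim 1,\qquad |\nabla^\beta \bar G_N(x,y)|\lesssim N^{-|\beta|}\quad (x\in A,\ \beta\ne0).\]
To get this, I use \eqref{eq:partialfrac}, $\bar G_N=G^0_N-G^m_N$, and write each term via \eqref{eq:GreenDecomp}. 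Zeroth order follows from \eqref{eq:bulkasymp} and \Cref{thm:sslongrange}, since $\|x-y\|\asymp N\gtrsim N^{\lambda/2}$ gives $\bar G_N=O(1)$.

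For the derivatives, consider first the massive part. The term $\nabla^\beta_x G^m(x,y)$ is controlled by \Cref{thm:massiveanalysis}: exponentially small if $\lambda<2$, and $O(N^{-|\beta|})$ if $\lambda=2$ by part \ref{cond:crit}. The boundary term $\sum_z H^m_N(x,z)G^m(z,y)$ is a massive-harmonic function of $x$ in $V_N$ and is bounded by $\sup_z|G^m(z,y)|$ as in the previous subsection. For the massless part, $\mathfrak a(x-y)$ has $|\nabla^\beta\mathfrak a(x-y)|\lesssim\|x-y\|^{-|\beta|}$ from the standard potential-kernel expansion. The boundary term $\sum_z H^0_N(x,z)\mathfrak a(z-y)$ is harmonic in $x$ with oscillation $O(1)$, since $\mathfrak a(z-y)=-\frac2\pi\log N+O(1)$ uniformly over $z$. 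Subtracting the constant $-\frac2\pi\log N$ and applying discrete interior derivative estimates for harmonic functions in a ball of radius $\asymp N$ around $x$ (Lawler, Thm.\ 1.7.1 and its iterates) gives $N^{-|\beta|}$.

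The massive-harmonic boundary term needs the analogous estimate for solutions of $(m^2-\Delta)u=0$. This is the main obstacle. I would handle it by writing $u=\mathbb E^x[\rho^{\tau}g(S_\tau)]$ and coupling walks started at $x$ and $x+e$: the walks can be coupled to meet before leaving a ball of radius $cN$ except with probability $O(1/N)$, and the discount factor $\rho^{\tau}$ differs by at most a factor $1-\rho^{O(\cdot)}$. Iterating the coupling for higher differences, or alternatively noting $u=$ harmonic part $+\,m^2(-\Delta_B)^{-1}u$ on a ball $B$ of radius $cN$ and using $m^2N^2\lesssim N^{2-\lambda}\cdot$ bounded terms, gives $|\nabla^\beta u|\lesssim N^{-|\beta|}$.

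Finally, apply the discrete Leibniz rule: $\nabla^\alpha(\chi \bar G_N)$ is a finite sum of products of shifted $\nabla^{\gamma}\chi$ and $\nabla^{\alpha-\gamma}\bar G_N$. Each product is $O(N^{-|\gamma|})\cdot O(N^{-|\alpha-\gamma|})$ on the annulus (using $|\bar G_N|\lesssim1$ when $\gamma=\alpha$). Off the annulus the bound is trivial: $\chi$ is constant there, so $f_0$ is either $0$ or $\bar G_N$, with the latter only relevant inside $V_N^{\delta/2}$ near its edge. This yields \eqref{eq:interpolation}.
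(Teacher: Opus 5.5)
Your proof is correct in outline and has the same skeleton as the paper: $\bar G_N=G^0_N-G^m_N$, the decomposition \eqref{eq:GreenDecomp}, difference bounds for $\mathfrak a$, \Cref{thm:massiveanalysis} for the whole-space massive term, and a cutoff across a layer of width $\asymp N$. Where you differ is the boundary terms. You attach the harmonic measure to the moving point $x$, so $\sum_z H^i_N(x,z)G^i(z,y)$ becomes a (massive-)harmonic function of $x$, and you then need interior difference estimates for it. You rightly flag this as the main obstacle.

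The paper removes that obstacle entirely. By symmetry of $G^i_N$ it writes $G^i_N(x,y)=G^i(x,y)-\sum_{z\in\partial V_N}H^i_N(y,z)G^i(z,x)$, so the harmonic measure is taken from the \emph{fixed} point $y$. The $x$-differences then fall only on the kernel $G^i(z,x)$ with $\|z-x\|\asymp N$. The bounds already used for the bulk term are $|\nabla^\alpha\mathfrak a(w)|\lesssim\|w\|^{-|\alpha|}$ and \Cref{thm:massiveanalysis} \ref{cond:subcrit} or \ref{cond:crit}. Averaged against the sub-probability $H^i_N(y,\cdot)$, these give $O(N^{-|\alpha|})$ with no regularity theory. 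Your route is heavier, though it relies only on standard interior estimates.

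Your write-up is more explicit than the paper's one-sentence ending about the cutoff. You spell out the choice $\chi=\psi(\cdot/N)$, the discrete Leibniz rule, and the zeroth-order bound $|\bar G_N(\cdot,y)|\lesssim 1$ on the layer. That last bound is needed when all differences land on $\chi$, and the paper leaves it implicit.

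If you keep your route, tighten the massive step.
\begin{itemize}
\item For $\lambda<2$ no regularity is needed. Since $0\le u\le \sup_{z}G^m(z,y)$, $u$ is exponentially small uniformly, and so are all its finite differences.
\item For $\lambda=2$, the identity $u=h-m^2(-\Delta_B)^{-1}u$ on a ball of radius $cN$ works because $(mN)^2=1$. However, it controls only first differences directly: summing second differences of the ball Green function over the ball costs a factor $\log N$. Higher orders should come from iterating the first-order bound on nested balls, using that $\nabla_e u$ is again massive-harmonic.
\item The coupling alternative, as stated, must also handle a parity obstruction. Simple random walks started at $x$ and $x+e$ can never occupy the same site at the same time, so you would need a lazy or time-shifted coupling, and that interacts with the discount $\rho^{\tau}$.
\end{itemize}
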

\begin{proof}
    We analyze the behavior of $\bar{G}_N$ on a buffered interpolation subannulus $V_N^{\delta_1}\setminus V_N^{\delta_2}$ of the initial interpolation layer $V_N^{\delta/2}\setminus V_N^{\delta'}$. Here $\delta_1$ and $\delta_2$ satisfy $\delta/2<\delta_1<\delta_2<\delta'<\delta$. It is sufficient to show that such interpolation for $f_0$ occur only on this subannulus. That is, if $f_0$ satisfies $f_0(x)=0$ on $V_N^{\delta_2}$ and $f_0(x)=\bar{G}_N(x,y)$ for $x\notin V_N^{\delta_1}$, then it satisfies the desired interpolation condition.
        
    For any $x\in V_N^{\delta_1}\setminus V_N^{\delta_2}$, $y\in V_N^\delta$ and $z\in \partial V_N$,
    \begin{equation}
        (\delta-\delta_2)N-1\le \|x-y\|,\quad \delta_1 N-1\le \|x-z\|.
    \end{equation}
    That is, $\|x-y\|\asymp N$ and $\|x-z\|\asymp N$ hold, which are \emph{uniform} in $y$ and $z$.
    
    We use the decomposition \eqref{eq:GreenDecomp} on $\bar{G}_N=G^0_N-G^m_N$. When $|\alpha|>0$, the massless part satisfies the estimate
    \[\nabla^\alpha G^0(x,y)=\nabla^\alpha \mathfrak{a}(x-y)=O(\|x-y\|^{-|\alpha|}),\]
    as $\|x-y\|\to\infty$; see, e.g., \autocite[Theorem 1.6.5]{lawler2013intersections}. The boundary part has the same estimate, by the argument used in \eqref{eq:ss_massiveboundary}.
    
    For the massive part, if $0\le \lambda<2$, then both $G^m(x,y)$ and $G^m(z,x)$ are exponentially small by \Cref{thm:massiveanalysis} \ref{cond:subcrit}. Hence each derivative $\nabla^\alpha G^m$ remains exponentially small, for any multiindex $\alpha$. If $\lambda=2$, then by \Cref{thm:massiveanalysis} \ref{cond:crit}, each derivative $\nabla^\alpha G^m$ is bounded by $O(\|x-y\|^{-|\alpha|})$. The corresponding boundary term has the same estimate, by the same argument as in the massless case. Here, we use the symmetric form of the harmonic-measure decomposition \eqref{eq:GreenDecomp} with $x$ and $y$ interchanged, so that $H_N$ is taken from the fixed point $y$, and therefore does not depend on the variable $x$ on which the derivatives act.
    
    Since the interpolation layer has thickness of order $N$, we may choose a smooth cutoff interpolating between $0$ and $\bar{G}_N$ while preserving the derivative bounds \eqref{eq:interpolation}.
\end{proof}

For the fixed $y\in V_N^\delta$, the function $U\coloneq H_N(\cdot,y)-f_0$ is a solution of 
\begin{equation}\label{eq:competecomparison2}
    \begin{aligned}
        LU(x)&=-Lf_0(x)=:g &x\in V_N,\\
        U(x)&=0 &x\notin V_N.
    \end{aligned}
\end{equation}
Since $f_0 \equiv 0$ on $V_N^\delta$, it suffices to show that $\sup_x |U(x)| = O(1)$ rather than bounding $|\bar{G}_N - G_N|$ directly. Thus, it remains to analyze the function $U$. To this end, we study the function $g = -L f_0$.
\begin{lemma}\label[lemma]{lemma:sourcenorm}
    The function $g$ satisfies $\|g\|_{L^2(V_N)}=O(N^{-1})$.
\end{lemma}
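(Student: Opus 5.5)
The plan is to show that $g=-Lf_0$ vanishes outside a bulk annulus of width of order $N$, and is pointwise $O(N^{-2})$ on it. Summing the square over the $O(N^2)$ lattice points of the annulus then gives $\|g\|_{L^2(V_N)}^2\lesssim N^2\cdot N^{-4}$, which is the claim.

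\emph{Step 1: $g$ vanishes off the interpolation layer.} On $V_N^{\delta'}$ minus its two-step inner layer, $f_0\equiv 0$. Since $L=-\Delta+N^\lambda\Delta^2$ has range $2$, this gives $g=0$ there.

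Outside $V_N^{\delta/2}$, enlarged inward by two steps, $f_0$ coincides with the canonical extension of $\bar G_N(\cdot,y)$ from \Cref{ch:rwrep}. The key observation is that this extension satisfies $L\bar G_N(\cdot,y)=\delta_y$ on all of $\mathbb{Z}^2$, including the boundary layer $\partial^-_2V_N$ where $L$ and $-\Delta_N+N^\lambda(-\Delta_N)^2$ differ. To see this, write $\bar G_N=G^0_N-G^m_N$ with $m^2=N^{-\lambda}$, and use \eqref{eq:GreenDecomp} for both pieces with $x$ as the free variable. The harmonic-measure terms are (massive-)harmonic in $x$ on the whole lattice, so the canonical extensions satisfy
\[-\Delta G^0_N(\cdot,y)=\delta_y,\qquad (m^2-\Delta)G^m_N(\cdot,y)=\delta_y \quad\text{on }\mathbb{Z}^2 .\]
Hence $-\Delta\bar G_N=m^2G^m_N$. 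Applying $N^\lambda\Delta$ once more gives
\[N^\lambda\Delta^2\bar G_N=-\Delta G^m_N=\delta_y-m^2G^m_N .\]
Adding the two identities, the $m^2G^m_N$ terms cancel and $L\bar G_N=\delta_y$. Since $y\in V_N^\delta$ lies away from this region, $g=0$ there as well.

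Thus $g$ is supported in the annulus $A\coloneq V_N^{\delta/2}\setminus V_N^{\delta'}$ thickened by two lattice steps, and $|A|\asymp N^2$. Here I should double-check the convention in \Cref{ch:rwrep} that the harmonic-measure term is harmonic, including at exterior points. This is where I expect the only real subtlety.

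\emph{Step 2: pointwise bound on $A$.} The operators $\Delta$ and $\Delta^2$ are finite linear combinations, with $N$-independent coefficients, of differences $\nabla^\alpha$ with $|\alpha|=2$ and $|\alpha|=4$ respectively. These are evaluated at points within distance $2$ of $x$, all lying in $V_N^{\delta/2}$ or in the region where $f_0=\bar G_N$. By \eqref{eq:interpolation}, and by the derivative bounds for $\bar G_N$ obtained in the proof of \Cref{lemma:existence}, which cover the edges of the layer, we get
\[|g(x)|\le |\Delta f_0(x)|+N^\lambda|\Delta^2f_0(x)|\lesssim N^{-2}+N^{\lambda-4}\lesssim N^{-2},\]
using $\lambda\le 2$. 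Importantly, only nonzero multiindices appear, so the possibly large size ($\asymp\log N$) of $f_0$ itself plays no role.

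\emph{Step 3: conclusion.} Combining the two steps,
\[\|g\|_{L^2(V_N)}^2=\sum_{x\in A}g(x)^2\lesssim N^2\cdot N^{-4}=N^{-2},\]
which gives $\|g\|_{L^2(V_N)}=O(N^{-1})$, uniformly in $y\in V_N^\delta$.

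The main obstacle is Step 1 near $\partial V_N$. If the extension identity failed there, boundary terms of size $N^\lambda\cdot O(N^{-2})$ on $O(N)$ sites would contribute to $\|g\|^2$ at order $N^{2\lambda-3}$, which is not $O(N^{-2})$ for $\lambda$ near $2$. So the exact identity $L\bar G_N=\delta_y$ for the canonical extension is essential.
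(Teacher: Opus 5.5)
\textbf{Gap in Step 1.} Your Steps 2 and 3 are the paper's argument: the pointwise bound $|g|\lesssim N^{-2}+N^{\lambda-4}$ from the derivative estimates of \Cref{lemma:existence}, summed over $O(N^2)$ sites. You are also right that those estimates say nothing near $\partial V_N$, so the equation itself must be used there. The problem is the justification of Step 1. The harmonic-measure term $x\mapsto\sum_z H^i_N(x,z)G^i(z,y)$ is (massive-)harmonic on $V_N$ and on $\mathbb{Z}^2\setminus(V_N\cup\partial V_N)$, but not at points of $\partial V_N$. There the walk is stopped at time $0$, and the mean-value property is equivalent to $\sum_{x'\sim x}G^i_N(x',y)=0$ for the extended function. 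This fails:
\begin{itemize}
\item \emph{Massive piece.} The strong Markov property gives $G^m(x,y)=\sum_z H^m_N(x,z)G^m(z,y)$ for every $x\notin V_N$. So the canonical extension of $G^m_N(\cdot,y)$ vanishes identically off $V_N$, and at $x\in\partial V_N$ one gets $(m^2-\Delta)G^m_N(x,y)=-\frac14\sum_{x'\in V_N,\,x'\sim x}G^m_N(x',y)\neq0$.
\item \emph{Massless piece.} Nothing forces the exterior values to cancel the interior ones. Already in one dimension, the canonical extension of $G_{\{1,\dots,L-1\}}(\cdot,y)$ to $x\le 0$ is $x\mapsto x$, whose Laplacian at $0$ equals $\frac12-\frac{y}{L}$.
\end{itemize}

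\textbf{Why this breaks the argument.} For any extension vanishing on $\partial V_N$ and every $x\in V_N$,
\[
L\bar G_N(x,y)-\delta_y(x)=\frac{N^\lambda}{4}\sum_{z\in\partial V_N,\, z\sim x}\Delta\bar G_N(z,y),
\]
so you need $\Delta\bar G_N(\cdot,y)=0$ on $\partial V_N$. For the canonical extension this quantity is generically of order $N^{-1}$, since the interior neighbour contributes $\frac14\bar G_N(\cdot,y)\asymp N^{-1}$. That gives $|g|\asymp N^{\lambda-1}$ on much of $\partial^-V_N$ and $\|g\|_{L^2(V_N)}^2\gtrsim N^{2\lambda-1}$, which is larger than the $N^{2\lambda-3}$ you estimated. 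So with this extension the lemma would fail.

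\textbf{The fix.} Take the values of $f_0$ on $\partial_2V_N$ from \eqref{eq:ss}: zero on $\partial V_N$, and on the second layer chosen so that the full-lattice $\Delta\bar G_N(\cdot,y)$ vanishes on $\partial V_N$ (odd reflection across each side). This is also the extension implicit in the paper's statements that $LH_N=0$ on $V_N$ and that $U$ vanishes off $V_N$, so the phrase ``canonical extension'' has to be read this way. With it, every value entering $Lf_0(x)$ for $x\in V_N$ near the boundary is prescribed by \eqref{eq:ss}, and
\[
Lf_0(x)=(-\Delta_N+N^\lambda(-\Delta_N)^2)\bar G_N(x,y)=\delta_y(x)=0,
\]
with no random-walk input. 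Equivalently, your identity $-\Delta\bar G_N=m^2G^m_N$ now holds on $V_N\cup\partial V_N$. That suffices, because $g$ is only needed on $V_N$, not on all of $\mathbb{Z}^2$.

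With this change your proof coincides with the paper's. The paper applies the $O(N^{-2})$ bound on all of $V_N$ without comment; in the boundary layer that bound holds only because $g$ vanishes there.
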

\begin{proof}
    From \Cref{lemma:existence}, we can estimate $g(x)$ pointwise: 
    \begin{equation}\label{eq:source_ptwise_bdd}
        \begin{aligned}
            |g(x)|&=|-Lf_0(x)|=|(-\Delta+N^\lambda \Delta^2) f_0(x)|\\
            &\le |\Delta f_0(x)|+N^\lambda |\Delta^2 f_0(x)|\lesssim N^{-2}+N^{\lambda-4}.
        \end{aligned}
    \end{equation}
    Since $\lambda\le 2$, we obtain $|g(x)|\lesssim N^{-2}$, and hence
    \[\|g\|_{L^2(V_N)}^2 \lesssim\sum_{x\in V_N} (N^{-2})^2\asymp N^2 \times N^{-4}=N^{-2}.\qedhere\]
\end{proof}

When calculating the $L^2$ norm of the solution $U$, the boundary condition and its effects must be taken into account. For any functions $v,w:V_N\cup\partial_1V_N\to\mathbb{R}$ with $v(x)=w(x)=0$ on $x\in \partial_1V_N$, no boundary terms appear when applying summation by parts once. That is,
\begin{equation}\label{eq:SBP}
    \langle -\Delta_N v,w\rangle=\frac{1}{2d}\sum_{e}\langle \nabla_e v,\nabla_e w\rangle=:\langle \nabla v,\nabla w\rangle=\langle v,-\Delta_N w\rangle.
\end{equation}
When applying bi-Laplacian, however, the zero boundary condition on $\partial_2 V_N$ induces the boundary error. In \autocite{kurt2009maximum}, its specific formula is as follows: for any $v$ and $w$ satisfying Dirichlet condition, the following relation
\begin{equation}\label{eq:posdef}
    \mathcal{D}(v,w)\coloneq \langle v,\Delta^2_N w\rangle =\langle \Delta_N v,\Delta_N w\rangle+\sum_{x\in V_N}r(x)v(x)w(x)
\end{equation}
holds, where $r(x)\coloneq |\{y\in\mathbb{Z}^2\setminus V_N:\|x-y\|_1=1\}|$. Hence, $r(x)\neq0$ if and only if $x\in\partial_1^- V_N$. Combining these observations, we have the following estimates.
\begin{lemma}\label[lemma]{lemma:solnorm}
    The function $U$ satisfies the following estimates
    \begin{equation}\label{eq:solnorm}
        \|U\|_{L^2(V_N)}=O(N),\ \|\nabla U\|_{L^2(V_N)}=O(1),\ \|\Delta U\|_{L^2(V_N)}=O(N^{-\lambda/2}).
    \end{equation}
\end{lemma}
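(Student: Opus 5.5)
The plan is an energy estimate: test the equation $LU=g$ in \eqref{eq:competecomparison2} against $U$ itself. Since $U$ vanishes outside $V_N$, it satisfies the Dirichlet (clamped) condition, so both \eqref{eq:SBP} and \eqref{eq:posdef} apply with $v=w=U$. This gives
\[
\langle U, LU\rangle=\|\nabla U\|_{L^2(V_N)}^2+N^\lambda\|\Delta U\|_{L^2(V_N)}^2+N^\lambda\sum_{x\in V_N}r(x)U(x)^2=\langle U,g\rangle .
\]
The boundary term is nonnegative because $r\ge 0$, so it can be dropped. The right-hand side is bounded by Cauchy--Schwarz and \Cref{lemma:sourcenorm}:
\[
|\langle U,g\rangle|\le \|U\|_{L^2(V_N)}\|g\|_{L^2(V_N)}\lesssim N^{-1}\|U\|_{L^2(V_N)}.
\]

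The key step is a discrete Poincaré inequality on $V_N$ for functions vanishing outside $V_N$:
\[
\|U\|_{L^2(V_N)}\le CN\|\nabla U\|_{L^2(V_N)}.
\]
I would derive it from the spectral data \eqref{eq:eigen}. The smallest eigenvalue of $-\Delta_N$ is $\widehat D(1,1)\asymp N^{-2}$, so
\[
\|\nabla U\|^2=\langle U,-\Delta_N U\rangle\ge cN^{-2}\|U\|^2 .
\]
Alternatively, one can sum $\nabla_e U$ along lines from the boundary and apply Cauchy--Schwarz.

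Combining the two displays gives
\[
\|\nabla U\|^2+N^\lambda\|\Delta U\|^2\lesssim N^{-1}\cdot N\|\nabla U\|=\|\nabla U\|.
\]
Hence $\|\nabla U\|_{L^2(V_N)}=O(1)$. Poincaré then gives $\|U\|_{L^2(V_N)}=O(N)$. Finally, $N^\lambda\|\Delta U\|^2\lesssim\|\nabla U\|=O(1)$, so $\|\Delta U\|_{L^2(V_N)}=O(N^{-\lambda/2})$. All three bounds in \eqref{eq:solnorm} are uniform in $y\in V_N^\delta$, since the constant in \Cref{lemma:sourcenorm} is uniform in $y$.

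The only point needing care is checking the sign and applicability of \eqref{eq:posdef}, i.e.\ that $\langle U,\Delta_N^2U\rangle\ge\|\Delta_N U\|^2$ for $U$ zero on $\partial_2V_N$. This is exactly the positivity statement quoted from \autocite{kurt2009maximum}. Everything else is routine.
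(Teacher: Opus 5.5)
Your proposal is correct and follows essentially the same argument as the paper: you test $LU=g$ against $U$ using \eqref{eq:SBP} and \eqref{eq:posdef}, bound the right-hand side with Cauchy--Schwarz and \Cref{lemma:sourcenorm}, and close with the Poincar\'e inequality. The only cosmetic difference is that you read off all three bounds from a single energy identity. The paper instead runs two separate chains, one for $\|\nabla U\|$ and a second one, bounded by $N^{-\lambda}\langle U,LU\rangle$, for $\|\Delta_N U\|$.
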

\begin{proof}
    We use the Poincaré inequality: for a function $v$ on $V_N$ with the Dirichlet boundary condition,
    \begin{equation}\label{eq:poincare}
        \|v\|_{L^2(V_N)}\lesssim N\|\nabla v\|_{L^2(V_N)},
    \end{equation}
    where the coefficient $N$ arises from $\operatorname{diam}(V_N)$. Using \eqref{eq:SBP} and \eqref{eq:poincare}, 
    \begin{equation}\label{eq:solgradestimate}
        \begin{aligned}
            \|\nabla U\|_{L^2(V_N)}^2 &\overset{\mathclap{\eqref{eq:SBP}}}{=}\langle-\Delta_N U,U\rangle_{V_N} \overset{\mathclap{\eqref{eq:posdef}}}{\le} \langle-\Delta_N U,U\rangle_{V_N}+N^\lambda \langle \Delta^2_N U,U\rangle_{V_N}\\
            &=\langle LU,U\rangle_{V_N}\le \|LU\|_{L^2(V_N)}\|U\|_{L^2(V_N)}\\
            &\overset{\mathclap{\eqref{eq:poincare}}}{\lesssim} N\|LU\|_{L^2(V_N)}\|\nabla U\|_{L^2(V_N)}.
        \end{aligned}
    \end{equation}
    \Cref{lemma:sourcenorm} with \eqref{eq:solgradestimate} yields $\|\nabla U\|_{L^2(V_N)}=O(1)$. Using \eqref{eq:poincare} again, we have $\|U\|_{L^2(V_N)}\lesssim N\|\nabla U\|_{L^2(V_N)}=O(N)$.
    
    The function $\Delta_N U$ no longer satisfies the boundary condition. Instead of dealing with the boundary,  we use \eqref{eq:posdef}.
    \begin{align*}
        \|\Delta_N U\|_{L^2(V_N)}^2&=\langle \Delta_N U, \Delta_N U\rangle \le \langle \Delta_N U, \Delta_N U\rangle +\sum_{x\in V_N} r(x)U(x)^2\\
        &\overset{\mathclap{\eqref{eq:posdef}}}{=}\langle U, \Delta^2_N U\rangle \le N^{-\lambda}\|\nabla U\|_{L^2(V_N)}^2 +\langle U, \Delta^2_N U\rangle\\
        &\overset{\mathclap{\eqref{eq:SBP}}}{=} N^{-\lambda}\langle U, -\Delta_N U\rangle +\langle U, \Delta^2_N U\rangle\\
        &=N^{-\lambda} \langle U, LU\rangle\\
        &\le N^{-\lambda}\|U\|_{L^2(V_N)}\|LU\|_{L^2(V_N)}.
    \end{align*}
    Combining this with \Cref{lemma:sourcenorm} and the previous estimate for $U$, we obtain the desired norm estimate $\|\Delta U\|_{L^2(V_N)}=O(N^{-\lambda/2})$.
\end{proof}

\subsubsection{Fractional Sobolev Embedding}
Now we discuss the fractional Sobolev embedding and some preliminaries. First, we define the DST coefficient for a function $f:V_N\to\mathbb{R}$ as 
\[\widehat{f}(k)\coloneq\langle f,\varphi_k\rangle,\]
where $\varphi_k$ is defined in \eqref{eq:eigen}. Then, for $s\ge0$, we define the fractional Laplacian $(-\Delta)^s f$ as
\[(-\Delta)^s f(x)\coloneq\sum_k\widehat{D}(k)^s\widehat{f}(k) \varphi_k(x)\]
and the fractional Sobolev norm $\|\cdot\|_{H^s(V_N)}$ as
\[\|f\|_{H^s(V_N)}^2\coloneq \sum_k (1+N^2\widehat{D}(k))^s|\widehat{f}(k)|^2.\]
Here $\widehat{D}(k)$ is the eigenvalue of $-\Delta_N$, defined in \eqref{eq:eigen}. This fractional Sobolev norm gives the Sobolev embedding theorem.
\begin{lemma}\label[lemma]{lemma:fracSobolev}
    Let $s>d/2$. For a function $f:V_N\to\mathbb{R}$, 
    \[\sup_{x\in V_N}|f(x)|\lesssim N^{-d/2}\|f\|_{H^s(V_N)}.\]
    In particular, if $\|f\|_{H^s(V_N)}\lesssim N^{d/2}$, then $\sup_{x\in V_N}|f(x)|\le C$ for some $C>0$, which is independent of $N$.
\end{lemma}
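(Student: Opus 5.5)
The plan is to expand $f$ in the orthonormal DST basis $\{\varphi_k\}$ from \eqref{eq:eigen} and bound each term pointwise. Writing $f(x)=\sum_k \widehat{f}(k)\varphi_k(x)$, we use the uniform bound $|\varphi_k(x)|\le (N+1)^{-d/2}$, which is immediate from the product formula. This gives
\[
|f(x)|\le (N+1)^{-d/2}\sum_k |\widehat f(k)|.
\]
Inserting the weight $(1+N^2\widehat D(k))^{s/2}$ and its reciprocal, and applying Cauchy--Schwarz, yields
\[
\sup_x|f(x)|\le (N+1)^{-d/2}\,\|f\|_{H^s(V_N)}\Bigl(\sum_k (1+N^2\widehat D(k))^{-s}\Bigr)^{1/2}.
\]
The whole statement therefore reduces to showing that $S_N\coloneq\sum_k (1+N^2\widehat D(k))^{-s}$ is bounded uniformly in $N$ whenever $s>d/2$.

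To bound $S_N$, we use $\sin t\ge 2t/\pi$ for $t\in[0,\pi/2]$. The argument $k_j\pi/(2(2N+2))$ lies in $(0,\pi/2)$ because $k_j\le 2N+1$. Hence
\[
\widehat D(k)\ge \frac{2}{d}\sum_j \frac{4}{\pi^2}\Bigl(\frac{k_j\pi}{2(2N+2)}\Bigr)^2\ge c\,\frac{\|k\|^2}{N^2}
\]
with $c>0$ depending only on $d$. It follows that $1+N^2\widehat D(k)\ge c'(1+\|k\|^2)$, and so
\[
S_N\le C\sum_{k\in\mathbb{Z}_{\ge1}^d}(1+\|k\|^2)^{-s}.
\]
The right-hand side converges precisely when $2s>d$, by comparison with $\int_{\mathbb{R}^d}(1+|\xi|^2)^{-s}\,d\xi$. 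This proves the main inequality. The "in particular" clause is then immediate: if $\|f\|_{H^s}\lesssim N^{d/2}$, the factor $N^{-d/2}$ cancels and leaves a constant.

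The only step requiring any care is the uniform lower bound on the eigenvalues, that is, checking that the sine arguments stay in $[0,\pi/2]$ so that the linear lower bound for $\sin$ applies. Once that is in place, the remaining estimates are routine.
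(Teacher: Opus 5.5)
Your proof is correct and follows the paper's argument: Cauchy--Schwarz against the $H^s$ weights, combined with the $(N+1)^{-d/2}$ normalization of the $\varphi_k$. The one difference is how you bound $\sum_k(1+N^2\widehat D(k))^{-s}$: you use $\sin t\ge 2t/\pi$ and compare with a convergent lattice sum, which gives the uniform-in-$N$ domination that the paper's Riemann-sum/monotone-convergence step relies on but does not state explicitly.
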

\begin{proof}
    Cauchy-Schwartz inequality gives
    \begin{align*}
        \sum_k |\widehat{f}(k)|&\le\left(\sum_k (1+N^2\widehat{D}(k))^{-s}\right)^{1/2}\left(\sum_k (1+N^2\widehat{D}(k))^s |\widehat{f}(k)|^2\right)^{1/2}\\
        &=\left(\sum_k (1+N^2\widehat{D}(k))^{-s}\right)^{1/2}\|f\|_{H^s(V_N)}.
    \end{align*}
    As $N\to\infty$, 
    \[\sum_k (1+N^2\widehat{D}(k))^{-s}\simeq \int_{[0,\infty)^d} \mathbbm{1}_{[0,N]^d}(x) (1+N^2\widehat{D}(x))^{-s}dx\]
    where $\widehat{D}(x)\coloneq 2/d\sum \sin^2 (x_j \pi/(2N+2))$ is the canonical extension of $\widehat{D}$ on $\mathbb{R}^d$. The integrand converges pointwise and monotonically to $(1+\pi^2|x|^2/4)^{-s}$, and this is integrable since $s>d/2$. Hence, $\sum_k |\widehat{f}(k)|\lesssim \|f\|_{H^s(V_N)}$. From these,
    \[|f(x)|=\left|\sum_k \widehat{f}(k)\varphi_k(x)\right|\lesssim N^{-d/2}\sum_k |\widehat{f}(k)|\lesssim N^{-d/2}\|f\|_{H^s(V_N)}.\]
    The coefficient $N^{-d/2}$ comes from the normalization constant $(N+1)^{-d/2}$ of $\varphi_k(x)$.
\end{proof}
\begin{remark}
    Instead of using the fractional Sobolev norm, we can define an integer-order Sobolev norm $H^k$ by the finite difference operators. This norm also yields a Sobolev embedding theorem. However, \Cref{lemma:solnorm} only yields $\|U\|^2_{H^2(V_N)}\asymp N^{4-\lambda}$, which is insufficient to deduce the uniform $O(1)$ bound unless $\lambda=2$. This is why we instead work with a fractional Sobolev norm of order $1+\varepsilon<2$.
\end{remark}

Although the fractional Laplacian $(-\Delta)^s$ is well defined for any $s\ge0$, one should note that even when $s=m\in\mathbb{N}$, it does not automatically coincide with the standard discrete polyharmonic operator $(-\Delta_N)^m$. The reason is that the spectral definition only uses the values of the function inside $V_N$, whereas the discrete Laplacian $\Delta_N$ also depends on boundary values through the zero extension outside $V_N$. Consequently, the identity $(-\Delta)^m =(-\Delta_N)^m$ for integer $m$ requires additional boundary compatibility conditions.

To avoid this issue, we consider a smooth cutoff function $\eta$ satisfying
\[0\le\eta\le1,\ \eta\equiv 1\text{ on } V_N^\delta,\ \eta\equiv 0 \text{ on } V_N\setminus V_N^{\delta/2},\ \sup|\nabla^\alpha \eta|\lesssim N^{-|\alpha|},\]
and we abbreviate $\tilde{U}=U\eta$ and introduce a new source term $\tilde{g}=L\tilde{U}$. (Recall that $U=H_N(\cdot,y)-f_0$; see \eqref{eq:competecomparison2}.) Then $\tilde{U}$ and $\tilde{g}$ preserve the norm scaling of the initial functions $U$ and $g$. Namely, the following lemma holds.
\begin{lemma}\label[lemma]{lemma:cutoffsolnorm}
    The function $\tilde{U}$ satisfies 
    \[\|\tilde{U}\|_{L^2(V_N)}=O(N),\ \|\nabla \tilde{U}\|_{L^2(V_N)}=O(1),\ \|\Delta \tilde{U}\|_{L^2(V_N)}=O(N^{-\lambda/2}).\]
\end{lemma}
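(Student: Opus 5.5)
We derive the three bounds from \Cref{lemma:solnorm} via discrete Leibniz rules, using that $\eta$ is smooth at scale $N$. The tools are the product rule $\nabla_e(U\eta)(x)=\eta(x+e)\nabla_eU(x)+U(x)\nabla_e\eta(x)$ and the discrete Laplacian product rule
\[\Delta(U\eta)(x)=\eta(x)\Delta U(x)+U(x)\Delta\eta(x)+\frac{1}{2d}\sum_e\nabla_eU(x)\nabla_e\eta(x).\]
The last identity follows by expanding $U(x+e)\eta(x+e)-U(x)\eta(x)$. Since $\eta$ vanishes on $V_N\setminus V_N^{\delta/2}$, the function $\tilde U$ vanishes near $\partial V_N$. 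Hence $\Delta\tilde U=\Delta_N\tilde U$, and there are no boundary issues on the left-hand sides.

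\emph{$L^2$ bound.} Since $0\le\eta\le1$, we have $\|\tilde U\|_{L^2}\le\|U\|_{L^2}=O(N)$.

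\emph{Gradient bound.} Using $|\eta|\le1$ and $|\nabla_e\eta|\lesssim N^{-1}$,
\[\|\nabla\tilde U\|_{L^2}\lesssim\|\nabla U\|_{L^2}+N^{-1}\|U\|_{L^2}=O(1)+N^{-1}O(N)=O(1).\]

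\emph{Laplacian bound.} Apply the Laplacian product rule and the bounds $|\Delta\eta|\lesssim N^{-2}$, $|\nabla_e\eta|\lesssim N^{-1}$:
\[\|\Delta\tilde U\|_{L^2}\lesssim\|\Delta U\|_{L^2}+N^{-2}\|U\|_{L^2}+N^{-1}\|\nabla U\|_{L^2}=O(N^{-\lambda/2})+O(N^{-1})+O(N^{-1}).\]
Since $\lambda\le2$, we have $N^{-1}\le N^{-\lambda/2}$, so the total is $O(N^{-\lambda/2})$. Here $\Delta U$ is the full-lattice Laplacian of the zero extension of $U$, which equals $\Delta_NU$ on $V_N$. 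The shifted terms such as $\nabla_eU(x)$ with $x+e\notin V_N$ are harmless, because $\nabla\eta$ vanishes near the boundary.

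The main obstacle, which is mild, is this last step. Two checks are needed. First, the cross term $N^{-1}\|\nabla U\|$ must be dominated by $N^{-\lambda/2}$, and this is exactly where $\lambda\le2$ enters. Second, the norm $\|\Delta U\|_{L^2(V_N)}$ from \Cref{lemma:solnorm} refers to $\Delta_NU$, which matches the quantity appearing in the product rule on the support of $\eta$.
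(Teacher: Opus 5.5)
Your proposal is correct and follows the paper's proof essentially step by step: you get the $L^2$ bound from $0\le\eta\le1$, the gradient bound from the discrete Leibniz rule, and the Laplacian bound from the product (commutator) identity $\Delta(U\eta)=\eta\Delta U+U\Delta\eta+\nabla U\cdot\nabla\eta$ combined with \Cref{lemma:solnorm}. You also make explicit a step the paper leaves implicit, namely that the commutator contribution $O(N^{-1})$ is absorbed into $O(N^{-\lambda/2})$ because $\lambda\le 2$.
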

\begin{proof}
    By \Cref{lemma:solnorm} and the assumption of $\eta$, the bound for $\tilde{U}$ follows directly. Namely,
    \[\|\tilde{U}\|^2_{L^2(V_N)}=\sum_{x\in V_N}\tilde{U}(x)^2=\sum_{x\in V_N}(U(x)\eta(x))^2\le \sum_{x\in V_N}U(x)^2=\|U\|^2_{L^2(V_N)}.\]
    When calculating $\|\nabla\tilde{U}\|$, we use the discrete Leibniz rule 
    \[\nabla_e (f_1 f_2)(x)=\nabla_e f_1(x) f_2(x)+f_1(x+e)\nabla_e f_2(x),\]
    and hence $|\nabla_e\tilde{U}(x)|\lesssim |\nabla_e U(x)|+\frac{1}{N}|U(x)|$. Combining this with \Cref{lemma:solnorm}, 
    \begin{align*}
        \|\nabla\tilde{U}\|^2_{L^2(V_N)}&=\frac{1}{4} \sum_e \sum_x (\nabla_e \tilde{U}(x))^2\\
        &\lesssim \frac{1}{4} \sum_e \sum_x \left[(\nabla_e U(x))^2+\frac{1}{N^2}|U(x)|^2\right]\\
        &=O(1).
    \end{align*}

    When estimating $\|\Delta\tilde{U}\|$, we use the commutator identity
    \[\Delta(U\eta)=\eta \Delta U +[\Delta,\eta] U,\]
    where $[A,B]\coloneq AB-BA$ is the commutator operator. In this context, we regard $\eta:v\mapsto \eta v$ as an operator. Since $\sup \eta=1$, the norm of the first term is bounded by $\|\Delta U\|_{L^2(V_N)}=O(N^{-\lambda/2})$. For the second term, using the commutator operator identity
    \begin{equation}\label{eq:comidentity}
        [\Delta,f_1]f_2(x)=f_2(x)\Delta f_1(x)+\nabla f_1(x) \cdot \nabla f_2 (x),
    \end{equation}
    the commutator part can be estimated as
    \begin{align*}
        \|[-\Delta,\eta]U\|_{L^2(V_N)}^2&\overset{\mathclap{\eqref{eq:comidentity}}}{\lesssim} \sum_{x\in V_N} U(x)^2 (-\Delta \eta(x))^2 +\sum_{x\in V_N} \nabla \eta(x)^2 \nabla U(x)^2\\
        &\lesssim N^{-4}\|U\|_{L^2(V_N)}^2+N^{-2}\|\nabla U\|_{L^2(V_N)}^2 \overset{\mathclap{\eqref{eq:solnorm}}}{\lesssim} N^{-2}.
    \end{align*}
    Combining these, we have $\|\Delta\tilde{U}\|_{L^2(V_N)}=O(N^{-\lambda/2}) $.
\end{proof}
Using the commutator identity again, we can estimate the norm of $\tilde{g}$.
\begin{lemma}\label[lemma]{lemma:cutoffsourcenorm}
    We have $\|\tilde{g}\|_{L^2(V_N)}=O(N^{-1})$.
\end{lemma}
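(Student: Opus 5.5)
We need $\|\tilde g\|_{L^2(V_N)}=O(N^{-1})$, where $\tilde g=L\tilde U=L(\eta U)$. The plan is to write $L(\eta U)=\eta\,LU+[L,\eta]U$ and bound each piece separately. In the first piece, $LU=g$ on $V_N$ by \eqref{eq:competecomparison2}. Since $0\le\eta\le1$, \Cref{lemma:sourcenorm} gives $\|\eta g\|_{L^2(V_N)}\le\|g\|_{L^2(V_N)}=O(N^{-1})$. Because $\eta$ vanishes outside $V_N^{\delta/2}$, $\tilde U$ is supported well inside $V_N$, so no boundary effects from $\partial_2V_N$ arise. In particular, $\Delta^2_N\tilde U$ coincides with the whole-space $\Delta^2\tilde U$, and the commutator manipulations can be carried out freely.

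It remains to control $[L,\eta]U=[-\Delta,\eta]U+N^\lambda[\Delta^2,\eta]U$. For the Laplacian part, the proof of \Cref{lemma:cutoffsolnorm} already shows $\|[-\Delta,\eta]U\|_{L^2}\lesssim N^{-2}\|U\|+N^{-1}\|\nabla U\|\lesssim N^{-1}$, using \eqref{eq:comidentity} and \Cref{lemma:solnorm}. For the bi-Laplacian part, I would expand
\[[\Delta^2,\eta]=\Delta[\Delta,\eta]+[\Delta,\eta]\Delta.\]
Applying \eqref{eq:comidentity} twice, together with the discrete Leibniz rule, writes $[\Delta^2,\eta]U$ as a finite sum of terms of the form $(\nabla^\beta\eta)(\text{shifted})\cdot\nabla^\gamma U$ with $|\beta|\ge1$ and $|\beta|+|\gamma|=4$. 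Every such term involves at most three derivatives of $U$ and at least one derivative of $\eta$, which contributes a factor $N^{-|\beta|}$.

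The main obstacle is the terms with three derivatives on $U$, such as $\nabla\eta\cdot\nabla\Delta U$ of size $N^{-1}\|\nabla^3U\|$. \Cref{lemma:solnorm} only controls $U$, $\nabla U$ and $\Delta U$. The plan is to reorganize so that at most two derivatives ever fall on $U$ unpaired. Write $[\Delta,\eta]\Delta U=\Delta U\,\Delta\eta+\nabla\eta\cdot\nabla\Delta U$, and note that $\nabla\eta\cdot\nabla(\Delta U)=\nabla\cdot(\nabla\eta\,\Delta U)-\Delta\eta\,\Delta U$ up to shifts. This reduces $N^\lambda[\Delta^2,\eta]U$ to two kinds of terms. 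The first kind is of size $N^\lambda(N^{-2}\|\Delta U\|+N^{-3}\|\nabla U\|+N^{-4}\|U\|)\lesssim N^{\lambda/2-2}+N^{\lambda-3}$, which is $O(N^{-1})$ since $\lambda\le2$. The second kind are divergence-form terms $N^\lambda\nabla^*(\nabla\eta\,\Delta U)$.

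To handle the divergence-form terms, I would not estimate them in $L^2$ pointwise. Instead I would measure $\tilde g$ by duality, or use interior regularity for $L$ on the annulus where $\nabla\eta\neq0$. The annulus lies at distance $\asymp N$ from both $\partial V_N$ and $y$, so there $LU=g$ is small and smooth. A discrete Caccioppoli estimate for $L$ on nested annuli then bounds $N^\lambda\|\nabla\Delta U\|_{L^2(\text{annulus})}$ by $N^{\lambda}\bigl(N^{-1}\|\Delta U\|+N^{-2}\|\nabla U\|\bigr)+N\|g\|$. Combined with the factor $N^{-1}$ from $\nabla\eta$, this gives $O(N^{\lambda/2-2}+N^{\lambda-3}+N^{-1})=O(N^{-1})$. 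I expect this interior Caccioppoli step for the fourth-order operator $L$, with its mixed scaling between $-\Delta$ and $N^\lambda\Delta^2$, to be the delicate part of the argument. Summing all the pieces yields $\|\tilde g\|_{L^2(V_N)}=O(N^{-1})$.
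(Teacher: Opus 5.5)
Your proposal follows the paper's proof. It uses the same split $\tilde g=\eta LU+[L,\eta]U$, applies \Cref{lemma:solnorm} to every commutator term except $\nabla\eta\cdot\nabla\Delta U$, and handles that term with an interior Caccioppoli estimate on the annulus. The divergence-form rewriting and the duality option are unnecessary detours, since the Caccioppoli bound on $\|\nabla\Delta U\|_{L^2}$ already gives the required $L^2$ control.

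The paper carries out the step you flag as delicate by noting that $w=-\Delta U$ solves the second-order screened equation $w-N^\lambda\Delta w=LU$, and testing against $\chi^2 w$; this gives $\|\nabla\Delta U\|_{L^2(A)}\lesssim N^{-1-\lambda/2}$. Your variant, which keeps $g$ on the right-hand side, works equally well: it only adds $N^{\lambda/2}\|g\|\le N\|g\|$. It also removes the need for $\nabla\eta$ to be supported where $g=0$.
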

\begin{proof}
    We have $\tilde{g}=L(U\eta)=\eta LU+[L,\eta] U$, and 
    \[\|\eta LU\|_{L^2(V_N)}\le \|\eta\|_{L^\infty(V_N)}\|LU\|_{L^2(V_N)}=O(N^{-1})\]
    directly follows. It remains to estimate the commutator term
    \[[L,\eta]U=[-\Delta_N,\eta]U+N^\lambda [\Delta_N^2,\eta]U.\]

    The Laplacian part $[-\Delta_N,\eta]U=(-\Delta_N \eta)U+\nabla \eta \cdot \nabla U$ has the following upper bound:
    \begin{equation}\label{eq:laplace_commutator}
        \begin{aligned}
            \|(-\Delta_N\eta)U\|_{L^2(V_N)}&\lesssim N^{-2}\|U\|_{L^2(V_N)}=O(N^{-1}),\\
            \|\nabla \eta \cdot \nabla U\|_{L^2(V_N)}&\lesssim N^{-1}\|\nabla U\|_{L^2(V_N)}=O(N^{-1}).
        \end{aligned}
    \end{equation}
    When calculating the bi-Laplacian part $[\Delta^2_N,\eta]U$, since $\eta$ vanishes near the boundary, we may identify $(\Delta_N)^2$ with $\Delta^2_N$ on the support. Hence, if we ignore shift indices,
    \begin{equation}\label{eq:bilaplace_commutator}
        \begin{aligned}
            [\Delta^2,\eta]u&=\Delta[\Delta,\eta]U+[\Delta,\eta]\Delta U\\
            &=\Delta (U\Delta\eta +\nabla \eta\cdot\nabla U) +\Delta U \Delta \eta + \nabla \eta \cdot \nabla \Delta U\\
            &=2\Delta U\Delta \eta +U \Delta^2 \eta + 2\nabla \Delta \eta \cdot \nabla U +2\nabla \eta\cdot \nabla \Delta U + \nabla^2 \eta :\nabla^2 U,
        \end{aligned}
    \end{equation}
    where $\nabla^2 \eta:\nabla^2 U(x)\coloneq \sum_{e,e'} \nabla_e\nabla_{e'} \eta (x)\nabla_e\nabla_{e'} U(x)$, and the sum is taken over all directed edges connected to $x$. By \Cref{lemma:solnorm}, all terms in \eqref{eq:bilaplace_commutator} except $\nabla \eta\cdot\nabla\Delta U$ satisfy the estimates
    \begin{equation}\label{eq:bilaplace_commutator_estimate}
        \begin{aligned}
            \|\Delta U \Delta \eta\|_{L^2(V_N)}&\lesssim N^{-2}\|\Delta U\|_{L^2(V_N)}=O(N^{-2-\lambda/2}),\\
            \|\nabla^2 \eta :\nabla^2 U\|_{L^2(V_N)}&\lesssim N^{-2}\|\nabla^2 U\|_{L^2(V_N)}\\
            &\asymp N^{-2}\|\Delta U\|_{L^2(V_N)}=O(N^{-2-\lambda/2}),\\
            \|U \Delta^2 \eta\|_{L^2(V_N)}&\lesssim N^{-4}\|U\|_{L^2(V_N)}=O(N^{-3}),\\
            \|\nabla \Delta \eta\cdot\nabla U\|_{L^2(V_N)}&\lesssim N^{-3}\|\nabla U\|_{L^2(V_N)}=O(N^{-3}).
        \end{aligned}
    \end{equation}
    After multiplying by the factor $N^\lambda$, all of them are of order $O(N^{-1})$.
    
    To estimate $\nabla \eta\cdot\nabla\Delta U$, we use Caccioppoli-type estimate. Consider a new smooth cutoff $\chi$, which satisfies $0\le \chi\le 1$, $\operatorname{supp}\chi \subset V_N^{\delta'}\setminus\partial^-_2V_N^{\delta'}$, and $\partial_2 \operatorname{supp}\eta \subset \{\chi=1\}$. Here $\delta'$ is that of \Cref{lemma:existence}. The support condition is to guarantee $\operatorname{supp} \chi\subset \{g=0\}\setminus \partial^-\{g=0\}$ and $\operatorname{supp}\nabla\eta\subset\{\chi=1\}\setminus\partial^-\{\chi=1\}$. 

    From these conditions, we may regard $\Delta_N$ and $\Delta^2_N$ as whole space operator $\Delta$ and $\Delta^2$, respectively. Let $A$ be the support of $\nabla_{e_j} \eta$, for a suitable direction $e_j$. From the construction of $\chi$, we have $A\cup \partial A\subset\{\chi=1\}$. On $\operatorname{supp}\chi$, we have $-\Delta U +N^\lambda (-\Delta)^2 U=0$, and hence it holds on $A$. Now we abbreviate $w=-\Delta U$. Then we have
    \begin{equation}\label{eq:Caccioppoli}
        \langle \chi^2 w, w-N^\lambda \Delta w\rangle= \|\chi w\|^2 +N^\lambda \langle \chi^2 w, -\Delta w\rangle=0.
    \end{equation}
    Here the inner product is on $V_N$ due to the support condition of $\chi$. Applying summation by parts and discrete Leibniz rule, from \eqref{eq:Caccioppoli},
    \begin{equation}\label{eq:Caccioppoli_Error}
        \begin{aligned}
            \langle \chi^2 w, -\Delta w\rangle&=\langle \nabla (\chi^2 w), \nabla w\rangle\\
            &=\frac{1}{4}\sum_{x,e} \nabla_e w(x)[\nabla_e w(x)\chi^2(x+e) +w(x) \nabla_e \chi^2(x)]\\
            &=\frac{1}{4}\sum_e \|\chi(\cdot+e)\nabla_e w \|_{L^2(V_N)}^2 \\
            &\phantom{=}+\frac{1}{4}\sum_{x,e} w(x)\nabla_e w(x) (\chi(x)+\chi(x+e))\nabla_e \chi(x).
        \end{aligned}
    \end{equation}
    Using Young's inequality, for any $\varepsilon>0$,
    \begin{equation}\label{eq:Caccioppoli_Young}
        \begin{aligned}
            &\left|\sum_{x,e} w(x)\nabla_e w(x) (\chi(x)+\chi(x+e))\nabla_e \chi(x)\right|\\
            \le& 2\varepsilon \sum_{x,e} (\nabla_e w(x))^2 (\chi^2(x)+\chi^2(x+e))\\
            &+\frac{1}{4\varepsilon}\sum_{x,e} (w(x) \nabla_e \chi(x))^2.
        \end{aligned}
    \end{equation}
    Since we take sum over all $x\in V_N$ and all edges $e$, if we extend by $0$ outside $V_N$, we have
    \[\sum_{x,e}(\nabla_e w(x) \chi(x))^2=\sum_{x,e}(\nabla_e w(x) \chi(x+e))^2.\]
    Therefore it is absorbed into $\sum_e \|\chi(\cdot+e) \nabla_e w\|_{L^2(V_N)}^2$ if we take $\varepsilon$ small. For such $\varepsilon$, the second term is simply controlled by $N^{-2}\|w\|_{L^2(\operatorname{supp}\nabla\chi)}^2\le N^{-2}\|w\|_{L^2(V_N)}^2$. In summary, 
    \begin{align*}
        0&=\|\chi w\|_{L^2(V_N)}^2 +N^\lambda \langle \chi^2 w, -\Delta w\rangle_{L^2(V_N)} \\
        &\gtrsim \|\chi w\|_{L^2(V_N)}^2 +N^\lambda \sum_e \|\chi(\cdot+e) \nabla_e w\|_{L^2(V_N)}^2 -N^{\lambda-2} \|w\|_{L^2(V_N)}^2 \\
        &\overset{\eqref{eq:solnorm}}{\gtrsim} N^\lambda \sum_e \|\chi(\cdot+e) \nabla_e w\|_{L^2(V_N)}^2 -N^{-2}.
    \end{align*}
    Which implies that
    \[N^{-1-\lambda/2}\gtrsim \left(\sum_e \|\chi(\cdot+e) \nabla_e w\|_{L^2(V_N)}^2\right)^{1/2}\gtrsim \|\nabla w\|_{L^2(A)}= \|\nabla\Delta U\|_{L^2(A)},\]
    since $A\cup\partial A\subset\{\chi=1\}$. Hence we conclude that, by repeating this argument for each direction,
    \begin{equation}\label{eq:bilaplace_exceptional}
        \|\nabla\eta\cdot\nabla\Delta U\|\lesssim N^{-1}\|\nabla\Delta U\|_{L^2(A)}\lesssim N^{-2-\lambda/2}.
    \end{equation}

    Combining \eqref{eq:laplace_commutator}, \eqref{eq:bilaplace_commutator_estimate}, and \eqref{eq:bilaplace_exceptional}, $\|\tilde{g}\|_{L^2(V_N)}=O(N^{-1})$ is shown.
\end{proof}

Now we can estimate the $H^{1+\varepsilon}$ norm of $\tilde{U}$, using \Cref{lemma:cutoffsolnorm} and \Cref{lemma:cutoffsourcenorm}.
\begin{proposition}\label[proposition]{thm:fractionalnorm}
    Let $\tilde{U}$ be defined as above. Then for any $\varepsilon\in(0,1)$,
    \[\|\tilde{U}\|_{H^{1+\varepsilon}(V_N)}\lesssim N.\]
    As a consequence, $\sup_{x\in V_N}|\tilde{U}(x)|=O(1)$.
\end{proposition}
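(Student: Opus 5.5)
The plan is to work entirely in the DST basis of \eqref{eq:eigen}, where the fractional norm is diagonal. Since $\tilde U$ is supported in $V_N^{\delta/2}$, away from the boundary, the restricted operators $\Delta_N$, $\Delta_N^2$ and $(\Delta_N)^2$ all agree when applied to $\tilde U$. Thus
\[
\widehat{\tilde g}(k)=\bigl(\widehat D(k)+N^\lambda \widehat D(k)^2\bigr)\widehat{\tilde U}(k).
\]
I will use two consequences of this identity:
\[
\|\tilde g\|^2_{L^2}=\sum_k \widehat D(k)^2\bigl(1+N^\lambda\widehat D(k)\bigr)^2|\widehat{\tilde U}(k)|^2,
\qquad
\sum_k \widehat D(k)\,|\widehat{\tilde U}(k)|^2=\|\nabla\tilde U\|^2_{L^2}.
\]
The second is valid because $\tilde U$ vanishes near the boundary, so \eqref{eq:SBP} applies. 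I also use $\sum_k \widehat D(k)^2|\widehat{\tilde U}(k)|^2=\|\Delta\tilde U\|_{L^2}^2$ and $\|\tilde U\|_{L^2}^2=\sum_k|\widehat{\tilde U}(k)|^2$.

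Next, I split the frequency sum in $\|\tilde U\|^2_{H^{1+\varepsilon}}=\sum_k(1+N^2\widehat D(k))^{1+\varepsilon}|\widehat{\tilde U}(k)|^2$ into two ranges.

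On the low range $N^2\widehat D(k)\le 1$, the weight is $O(1)$. This part is therefore bounded by $\|\tilde U\|^2_{L^2}=O(N^2)$ using \Cref{lemma:cutoffsolnorm}.

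On the high range, the weight is $\lesssim N^{2+2\varepsilon}\widehat D(k)^{1+\varepsilon}$. Interpolating with Hölder's inequality between the $\widehat D$-moment and the $\widehat D^2$-moment gives
\[
\sum_k \widehat D^{1+\varepsilon}|\widehat{\tilde U}|^2\le \Bigl(\sum_k\widehat D|\widehat{\tilde U}|^2\Bigr)^{1-\varepsilon}\Bigl(\sum_k\widehat D^2|\widehat{\tilde U}|^2\Bigr)^{\varepsilon}=\|\nabla\tilde U\|^{2(1-\varepsilon)}\|\Delta\tilde U\|^{2\varepsilon}.
\]
By \Cref{lemma:cutoffsolnorm} the right side is $O(1)\cdot O(N^{-\lambda\varepsilon})$, so the high range contributes $\lesssim N^{2+2\varepsilon-\lambda\varepsilon}$. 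This equals $O(N^2)$ only when $\lambda=2$, which is why the interpolation alone is not enough and the source bound must enter.

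To handle $\lambda<2$, I use $\tilde g$ directly. Since $\|\tilde g\|^2\ge\sum_k\widehat D(k)^2|\widehat{\tilde U}(k)|^2$, \Cref{lemma:cutoffsourcenorm} gives $\sum_k\widehat D^2|\widehat{\tilde U}|^2\lesssim N^{-2}$, which is a sharper bound on $\|\Delta\tilde U\|^2$ than $N^{-\lambda}$. Repeating the Hölder step with this bound,
\[
\sum_k \widehat D^{1+\varepsilon}|\widehat{\tilde U}|^2\lesssim 1^{1-\varepsilon}(N^{-2})^{\varepsilon}=N^{-2\varepsilon},
\]
so the high range is $\lesssim N^{2+2\varepsilon}N^{-2\varepsilon}=N^2$.

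Adding the two ranges gives $\|\tilde U\|_{H^{1+\varepsilon}}\lesssim N$, uniformly in $y$ because all constants in the cited lemmas are uniform. Since $1+\varepsilon>1=d/2$, \Cref{lemma:fracSobolev} with $d=2$ then yields $\sup_x|\tilde U(x)|\lesssim N^{-1}\cdot N=O(1)$.

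The step I expect to be delicate is justifying the spectral identity for $\tilde g$. It requires checking that $L\tilde U$, computed with the clamped operator $\Delta_N^2$, coincides with the spectral expression $\widehat D+N^\lambda\widehat D^2$ acting on $\tilde U$. This holds because $\operatorname{supp}\tilde U$ stays at distance of order $N$ from $\partial V_N$, so the zero extension of $\tilde U$ already satisfies both boundary conditions in \eqref{eq:ss}. I would state this explicitly, after which the rest is the bookkeeping above.
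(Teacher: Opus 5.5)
Your proof is correct, but the analytic step differs from the paper's. The paper does not interpolate. It uses the spectral gap $N^2\widehat D(k)\gtrsim 1$ to reduce to bounding $\|(-\Delta)^{(1+\varepsilon)/2}\tilde U\|^2=\langle -\Delta\tilde U,(-\Delta)^\varepsilon\tilde U\rangle$. It then adds the nonnegative term $N^\lambda\langle(-\Delta)^2\tilde U,(-\Delta)^\varepsilon\tilde U\rangle$ to obtain $\langle\tilde g,(-\Delta)^\varepsilon\tilde U\rangle$, and absorbs via Cauchy--Schwarz and the gap once more. So only $\|\tilde g\|=O(N^{-1})$ enters that step. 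You instead read off $\|\Delta\tilde U\|\le\|\tilde g\|$ from the diagonal form of $L$ on $\tilde U$, and interpolate it by H\"older against $\|\nabla\tilde U\|=O(1)$ from \Cref{lemma:cutoffsolnorm}.

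Both routes rest on the identification $L\tilde U=(-\Delta_N+N^\lambda(\Delta_N)^2)\tilde U$, valid because $\tilde U$ vanishes near $\partial V_N$. The paper uses this silently when it writes the pairing as $\langle\tilde g,(-\Delta)^\varepsilon\tilde U\rangle$; you state it explicitly, which is an improvement. Your first interpolation attempt, using $\|\Delta\tilde U\|=O(N^{-\lambda/2})$, is a detour and can be deleted.

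In fact your key inequality gives a shorter proof than either version. Combining $\sum_k\widehat D(k)^2|\widehat{\tilde U}(k)|^2\le\|\tilde g\|^2\lesssim N^{-2}$ with the gap gives
\[
\|\tilde U\|_{H^{1+\varepsilon}(V_N)}^2\le\|\tilde U\|_{H^{2}(V_N)}^2\lesssim N^4\sum_k \widehat D(k)^2|\widehat{\tilde U}(k)|^2\lesssim N^2 .
\]
So the $s=2$ norm is already $O(N)$, and you need neither the low/high split, nor the H\"older step, nor \Cref{lemma:cutoffsolnorm}. The paper's remark that $H^2$ is insufficient refers only to the bounds of \Cref{lemma:solnorm}, not to what \Cref{lemma:cutoffsourcenorm} provides.
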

\begin{proof}
    Using the inequality $1\lesssim N^2\widehat{D}(k)$, we obtain
    \begin{equation}\label{eq:1+epsilon_norm}
        \begin{aligned}
            \|\tilde{U}\|_{H^{1+\varepsilon}(V_N)}^2&=\sum_k (1+N^2\widehat{D}(k))^{1+\varepsilon}|\widehat{\tilde{U}}(k)|^2\\
            &\lesssim \sum_k N^{2(1+\varepsilon)}\widehat{D}(k)^{1+\varepsilon}|\widehat{\tilde{U}}(k)|^2\\
            &= N^{2(1+\varepsilon)}\|(-\Delta)^{(1+\varepsilon)/2}\tilde{U}\|_{L^2(V_N)}^2.
        \end{aligned}
    \end{equation}
    Recall that $\widehat{\tilde{U}}(k)=\langle\tilde{U},\varphi_k\rangle$ is the DST coefficient for $\tilde{U}$.
    
    By the Plancherel theorem, we have
    \begin{equation}\label{eq:plancherel}
        \begin{aligned}
            \|(-\Delta)^{(1+\varepsilon)/2}\tilde{U}\|_{L^2(V_N)}^2&= \sum \widehat D(k)^{1+\varepsilon} |\widehat{\tilde{U}}(k)|^2\\
            &=\langle -\Delta \tilde{U}, (-\Delta)^\varepsilon \tilde{U}\rangle\ge0.
        \end{aligned}
    \end{equation}
    Similarly,
    \begin{equation}\label{eq:plancherel2}
        \begin{aligned}
            \|(-\Delta)^{1+\varepsilon/2}\tilde{U}\|_{L^2(V_N)}^2&= \sum \widehat D(k)^{2+\varepsilon} |\widehat{\tilde{U}}(k)|^2\\
            &=\langle (-\Delta)^2 \tilde{U}, (-\Delta)^\varepsilon \tilde{U}\rangle\ge0.
        \end{aligned}
    \end{equation}

    Using $1\lesssim N^2\widehat{D}(k)$ again, it follows that
    \begin{equation}\label{eq:fractional_norm}
        \begin{aligned}
            \|(-\Delta)^{(1+\varepsilon)/2}\tilde{U}\|_{L^2(V_N)}^2&= \sum \widehat{D}(k)^{1+\varepsilon}|\widehat{\tilde{U}}(k)|^2\\
            &=\sum \widehat{D}(k)^{1-\varepsilon}\widehat{D}(k)^{2\varepsilon}|\widehat{\tilde{U}}(k)|^2\\
            &\gtrsim N^{-2(1-\varepsilon)}\sum \widehat{D}(k)^{2\varepsilon}|\widehat{\tilde{U}}(k)|^2\\
            &=N^{-2(1-\varepsilon)} \|(-\Delta)^\varepsilon \tilde{U}\|_{L^2(V_N)}^2.
        \end{aligned}
    \end{equation}
    
    Therefore,
    \begin{equation}\label{eq:(1+epsilon)/2_norm}
        \begin{aligned}
            \|(-\Delta)^{(1+\varepsilon)/2}\tilde{U}\|_{L^2(V_N)}^2&\overset{\mathclap{\eqref{eq:plancherel}}}{=}\langle-\Delta \tilde{U}, (-\Delta)^\varepsilon \tilde{U}\rangle\\
            &\overset{\mathclap{\eqref{eq:plancherel2}}}{\le} \langle-\Delta \tilde{U}, (-\Delta)^\varepsilon \tilde{U}\rangle+N^\lambda \langle(-\Delta)^2 \tilde{U}, (-\Delta)^\varepsilon \tilde{U}\rangle\\
            &=\langle \tilde{g},(-\Delta)^\varepsilon\tilde{U}\rangle.
        \end{aligned}
    \end{equation}

    From \Cref{lemma:cutoffsourcenorm},
    \begin{equation}\label{eq:epsilon_norm}
        \begin{aligned}
            \langle \tilde{g},(-\Delta)^\varepsilon\tilde{U}\rangle&\le  \|\tilde{g}\|_{L^2(V_N)}\|(-\Delta)^\varepsilon \tilde{U}\|_{L^2(V_N)}\\
            &\overset{\mathclap{\eqref{eq:fractional_norm}}}{\lesssim} N^{1-\varepsilon}\|\tilde{g}\|_{L^2(V_N)} \|(-\Delta)^{(1+\varepsilon)/2}\tilde{U}\|_{L^2(V_N)}.\\
            &\lesssim N^{-\varepsilon}\|(-\Delta)^{(1+\varepsilon)/2}\tilde{U}\|_{L^2(V_N)}.
        \end{aligned}
    \end{equation}

    \eqref{eq:(1+epsilon)/2_norm} and \eqref{eq:epsilon_norm} conclude that $\|(-\Delta)^{(1+\varepsilon)/2}\tilde{U}\|_{L^2(V_N)}\lesssim N^{-\varepsilon}$. In summary, 
    \[
        \|\tilde{U}\|_{H^{1+\varepsilon}(V_N)}^2\overset{\mathclap{\eqref{eq:1+epsilon_norm}}}{\lesssim} N^{2(1+\varepsilon)}\|(-\Delta)^{(1+\varepsilon)/2}\tilde{U}\|_{L^2(V_N)}^2\lesssim N^2.
    \]
    This completes the proof.
\end{proof}
We now complete the proof of \Cref{thm:compregime}.
\begin{proof}[Proof of \Cref{thm:compregime}]
    From \Cref{lemma:fracSobolev} and \Cref{thm:fractionalnorm}, we obtain 
    \[\sup_{x\in V_N}|\tilde{U}(x)|=\sup_{x\in V_N}|\eta(x)U(x)|=O(1).\]
    If $x$ lies in $V_N^\delta$, the cutoff function $\eta(x)$ is identically $1$. Thus for each $y\in V_N^\delta$, we have $|U(x)|=|G(x,y)-\bar{G}(x,y)|$, and it is bounded by some constant $C$, which depends only on $\delta$ and $\lambda$. From \Cref{thm:sslongrange} and \Cref{thm:ssshortrange}, we can estimate the asymptotic behavior of $\bar{G}(x,y)$ with error at most $O(1)$, and the upper bound $C$ can be absorbed in this $O(1)$ error term.
\end{proof}
    % !TEX root = ../main.tex
\section{Proof of \texorpdfstring{\Cref{thm:domregime}}{Theorem 1.2}}
In this section, we prove \Cref{thm:domregime}, without appealing \eqref{eq:partialfrac}. The main difference between \Cref{section:proofcompregime} and this section is that $G_N(\cdot,y)$ is regarded as a real function in \Cref{section:proofcompregime}, whereas we regard $G_N(\cdot,y)$ as a matrix, and we use some matrix identities. 

\subsection{DGFF Dominant Regime}\label{ch:DGFFdom}
We first consider the case $\lambda<0$. The goal is to show that the covariance of the two-dimensional DGFF $G^0_N=(-\Delta_N)^{-1}$ approximates $G_N$ well. However, a direct comparison between $G_N$ and $G^0_N$ is technically more involved. Instead of direct comparison with $G^0_N$, we introduce an auxiliary approximation by $\bar{G}_N$.

We summarize the goal of this section in the following proposition.
\begin{proposition}\label{thm:AuxApprox}
    Fix $\lambda<0$, $d=2$, and let $G_N$ and $\bar{G}_N$ be as in \eqref{eq:cl} and \eqref{eq:ss}, respectively. For a fixed $\delta\in(0,1)$ and $x,y\in V_N^\delta$, we have
    \begin{equation}\label{eq:AuxApprox2}
        |\bar{G}_N(x,y)-G^0_N(x,y)|=O(N^\lambda)
    \end{equation}
    and
    \begin{equation}\label{eq:AuxApprox1}
        |G_N(x,y)-\bar{G}_N(x,y)|=O(N^{\lambda-1})
    \end{equation}
    uniformly.
\end{proposition}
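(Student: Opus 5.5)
For \eqref{eq:AuxApprox2} I will use the spectral identity \eqref{eq:partialfrac}, which holds for every $\lambda$: $\bar G_N = G^0_N - G^m_N$ with $m^2=N^{-\lambda}\to\infty$. So it suffices to show $\sup_{x,y}|G^m_N(x,y)|=O(N^\lambda)$. From the random walk representation \eqref{eq:massiveboundeddomain} with survival rate $\rho=(1+N^{-\lambda})^{-1}\le N^\lambda$,
\[
0\le G^m_N(x,y)\le \sum_{n\ge0}\rho^{n+1}\mathbb{P}^x(S_n=y)\le \sum_{n\ge0}\rho^{n+1}=\frac{\rho}{1-\rho}=N^{\lambda}.
\]
This gives \eqref{eq:AuxApprox2} uniformly on all of $V_N$, not only in the bulk. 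The same bound follows from the eigenvalue expansion, since $(N^{-\lambda}-\Delta_N)^{-1}\le N^\lambda$ in operator norm, and the diagonal kernel entries are bounded by the operator norm.

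For \eqref{eq:AuxApprox1} I will treat $G_N,\bar G_N$ as matrices on $V_N$. Write $L=-\Delta_N+N^\lambda\Delta^2_N$ and $\bar L=-\Delta_N+N^\lambda(-\Delta_N)^2$. By \eqref{eq:posdef} and a one-line check of the stencils, the two bi-Laplacians differ only by a boundary correction $B\coloneq\Delta^2_N-(\Delta_N)^2$. This $B$ is supported on pairs in $\partial^-_1V_N$ (essentially the diagonal operator $r(x)$, up to bounded neighbouring entries). The resolvent identity then gives
\[
G_N-\bar G_N=-N^\lambda\, G_N B\,\bar G_N,
\qquad
(G_N-\bar G_N)(x,y)=-N^\lambda\sum_{z,w\in\partial^-_1V_N}G_N(x,z)B(z,w)\bar G_N(w,y).
\]
So I need bounds on $G_N(x,z)$ and $\bar G_N(w,y)$ when $x,y$ lie in the bulk and $z,w$ lie next to the boundary. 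Since there are $O(N)$ terms and $N^\lambda$ is in front, the target $N^{\lambda-1}$ requires the product of the two Green functions to be $O(N^{-2})$ per pair.

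For $\bar G_N(w,y)$ I will use $\bar G_N\le G^0_N$ entrywise: both $G^0_N$ and $G^m_N$ are nonnegative, and $G^m_N\ge0$. Combined with the classical boundary estimate $G^0_N(w,y)\lesssim N^{-1}$ for $w$ at distance $1$ from the boundary and $y$ in the bulk (a gambler's-ruin/harmonic measure bound), this gives $\bar G_N(w,y)=O(N^{-1})$.

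The main obstacle is bounding $G_N(x,z)$ near the boundary, because $G_N$ has no positivity or random-walk representation. I will first try a perturbative route. Write $G_N=\bar G_N-N^\lambda \bar G_N B G_N$, iterate once, and use $N^\lambda\to0$. Then I need an a priori bound on $N^\lambda\|\bar G_N B\|$ in a weighted $\ell^\infty$ norm with weight $\operatorname{dist}(\cdot,V_N^c)/N$. This seems plausible because $\bar G_N(\cdot,w)$ for $w\in\partial^-_1V_N$ has total mass of order $N$ and a sup of order $1/N$ in the bulk. The aim is to show that the Neumann series $\sum_k(-N^\lambda\bar G_NB)^k$ converges with each term preserving the $O(N^{-1})$ boundary decay, which would give $G_N(x,z)=O(N^{-1})$. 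If the weighted norm estimate fails, my fallback is an energy argument. Bound $\sum_z G_N(x,z)^2r(z)$ by $N^{-\lambda}\mathcal D(G_N(x,\cdot),G_N(x,\cdot))\le N^{-\lambda}G_N(x,x)\lesssim N^{-\lambda}\log N$, apply Cauchy–Schwarz in the resolvent formula, and then try to recover the lost factor.
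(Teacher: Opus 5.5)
Your argument for the first bound is fine: the entrywise bound $0\le G^m_N\le\rho/(1-\rho)=N^\lambda$ is the random-walk counterpart of the paper's spectral bound $N^\lambda/(1+N^\lambda\widehat D(k))\le N^\lambda$.

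The gap is in the second bound. Your resolvent formula is correct. On the box, $B=\Delta^2_N-(\Delta_N)^2$ is in fact exactly diagonal, with $B(z,z)=r(z)/16$. But the one estimate that carries the proof, $\sup_{z\in\partial^-V_N}|G_N(x,z)|=O(N^{-1})$ for $x\in V_N^\delta$, is never established. The Neumann route is only called plausible. The heuristic you give for it (column mass and bulk size of $\bar G_N(\cdot,w)$) does not control the quantity that matters, namely $\sup_u\sum_{w\in\partial^-V_N}\bar G_N(u,w)$, including $u$ near the boundary.

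The fallback provably fails. From $N^\lambda\mathcal D(f,f)\le\langle f,L_Nf\rangle=G_N(x,x)$ with $f=G_N(x,\cdot)$ you only get $\sum_z r(z)G_N(x,z)^2\lesssim N^{-\lambda}\log N$, which diverges for $\lambda<0$. Cauchy--Schwarz then gives $O(N^{(\lambda-1)/2}\sqrt{\log N})$, short of the target by a factor $N^{(1-\lambda)/2}\sqrt{\log N}$. The energy bound uses only the bi-Laplacian part of $L_N$, which is the weak part when $\lambda<0$. ``Recovering the lost factor'' is exactly the missing idea.

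The paper supplies it through positivity. Since $D_N=B\ge0$, one has $G_N\le\bar G_N$ in Loewner order, so off-diagonal entries are controlled by diagonal ones. The identity
\[
N^\lambda\bigl(\bar G_N D_N\bar G_N-G_N D_N G_N\bigr)=(\bar G_N-G_N)(L_N+\bar L_N)(\bar G_N-G_N)\ge0
\]
then yields $\sum_z r(z)G_N(x,z)^2\le\sum_z r(z)\bar G_N(x,z)^2\lesssim N\cdot N^{-2}$, using $0\le\bar G_N\le G^0_N$ and the boundary decay of $G^0_N$. This transfers the boundary control from $G_N$ to $\bar G_N$ without any pointwise information on $G_N$. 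Inserted into your Cauchy--Schwarz fallback, it gives $N^\lambda\cdot N^{-1/2}\cdot N^{-1/2}=N^{\lambda-1}$.

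Alternatively, your perturbative route can be closed without weights. The exit identity $\sum_z G^0_N(u,z)r(z)/4=\mathbb{P}^u(\text{exit})=1$ together with $0\le\bar G_N\le G^0_N$ gives $\|N^\lambda\bar G_NB\|_{\ell^\infty\to\ell^\infty}\le N^\lambda/4$. This operator only reads values on $\partial^-V_N$, so restricting $G_N=\bar G_N-N^\lambda\bar G_NBG_N$ to $\partial^-V_N$ gives $\sup_{z\in\partial^-V_N}|G_N(z,x)|\le(1-N^\lambda/4)^{-1}\sup_{z\in\partial^-V_N}\bar G_N(z,x)=O(N^{-1})$. Then $|(G_N-\bar G_N)(y,x)|\le\frac{N^\lambda}{4}\sup_{z\in\partial^-V_N}|G_N(z,x)|=O(N^{\lambda-1})$.

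As written, however, your proposal stops before either argument, so the crux of the second bound is missing.
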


\subsubsection{Auxiliary Approximation}
As already discussed in \Cref{ch:strategy}, $\bar{G}_N$ is diagonalized by the eigenbasis $\{\varphi_k\}_k$ given in \eqref{eq:eigen}, with the corresponding eigenvalue $(\widehat{D}(k)+N^\lambda\widehat{D}(k)^2)^{-1}$. The eigenvalue comparison yields that
\begin{equation}
    0\le \frac{1}{\widehat{D}(k)}-\frac{1}{\widehat{D}(k)+N^\lambda\widehat{D}(k)^2}=\frac{N^\lambda}{1+N^\lambda \widehat{D}(k)}\le N^\lambda.
\end{equation}
Since all eigenvalues of $G^0_N-\bar{G}_N$ are nonnegative, $G^0_N-\bar{G}_N$ is positive semidefinite. Furthermore, the matrix norm equals the maximal eigenvalue, from the self-adjoint property. That is, we have
\begin{equation}
    \|G^0_N-\bar{G}_N\|_{L^2(V_N)\to L^2(V_N)}\le N^\lambda.
\end{equation}
Therefore we deduce that
\begin{equation}\label{eq:AuxApproxUniform}
    \begin{aligned}
        |G^0_N(x,y)-\bar{G}_N(x,y)|&=|\langle \delta_x,(G^0_N-\bar{G}_N)\delta_y\rangle|\\
        &\le \|G^0_N-\bar{G}_N\|_{L^2(V_N)\to L^2(V_N)}\le N^\lambda.
    \end{aligned}
\end{equation}
This concludes the estimate \eqref{eq:AuxApprox2}. In fact, this estimate holds uniformly for all $x,y\in V_N$.

\subsubsection{Main Approximation}
Let $\bar{L}_N\coloneq -\Delta_N+N^\lambda (\Delta_N)^2$, and $D_N\coloneq  \Delta_N^2-(\Delta_N)^2=N^{-\lambda} (L_N-\bar{L}_N)$. By \eqref{eq:posdef}, for any function $f:\mathbb{Z}^2\to \mathbb{R}$ zero outside $V_N$,
\[\langle f,D_N f\rangle =\sum_{x\in V_N}r(x)f(x)^2\ge0.\]
Hence, $D_N$ is positive semidefinite. Both $L_N$ and $\bar{L}_N$ are symmetric, and thus we conclude that $\bar{G}_N-G_N$ is positive semidefinite, by the standard Löewner order argument. Testing the positive semidefinite matrix $\bar{G}_N-G_N$ against the standard coordinate vectors $\delta_x\pm\delta_y$, we obtain the following inequality: 
\begin{equation}\label{eq:covineq1}
    \begin{aligned}
        &\langle (\delta_x\pm \delta_y),(\bar{G}_N-G_N)(\delta_x\pm\delta_y)\rangle\\
        =&(\bar{G}_N-G_N)(x,x)\pm 2(\bar{G}_N-G_N)(x,y)+(\bar{G}_N-G_N)(y,y)\ge0.
    \end{aligned}
\end{equation}
for $x,y\in V_N^\delta$. Here $\delta_x$ denotes the Kronecker delta function $\delta_x(z)=\mathbbm{1}\{x=z\}$, viewed as the standard coordinate vector in $L^2(V_N)$.

Consequently,
\begin{equation}\label{eq:covineq2}
    |(\bar{G}_N-G_N)(x,y)|\lesssim (\bar{G}_N-G_N)(x,x)+(\bar{G}_N-G_N)(y,y).
\end{equation}
This yields that it suffices to estimate the diagonal term $|\bar{G}_N-G_N|(x,x)$. Hence, the goal of this section is to find a uniform estimate of $|\bar{G}_N-G_N|(x,x)$, for $x\in V_N^\delta$. Throughout this section, we fix $x,y \in V_N^\delta$.

The resolvent identity yields
\begin{equation}\label{eq:resolvent}
    \begin{aligned}
        \bar{G}_N-G_N&=\bar{G}_N(L_N-\bar{L}_N)G_N=N^\lambda \bar{G}_N D_N G_N\\
        &= G_N(L_N-\bar{L}_N)\bar{G}_N=N^\lambda G_N D_N \bar{G}_N.
    \end{aligned}
\end{equation}
Testing against the coordinate vector $\delta_x$, it follows that
\begin{equation}\label{eq:varerr}
    \begin{aligned}
        |\bar{G}_N(x,x)-G_N(x,x)|&=|\langle\delta_x,(\bar{G}_N-G_N)\delta_x \rangle|\\
        &=N^\lambda |\langle \sqrt{D_N}\bar{G}_N \delta_x, \sqrt{D_N} G_N \delta_x\rangle|\\
        &\le N^\lambda \|\sqrt{D_N}\bar{G}_N\delta_x\|\|\sqrt{D_N}G_N\delta_x\|,
    \end{aligned}
\end{equation}
where the matrix $\sqrt{D_N}$ is the unique symmetric, positive semidefinite matrix $M_N$ satisfying $M_N^2=D_N$.

In \eqref{eq:varerr}, we can represent the norm as the following inner product form
\[\|\sqrt{D_N}G_N\delta_x\|^2=\langle \sqrt{D_N}G_N\delta_x, \sqrt{D_N}G_N\delta_x\rangle=\langle\delta_x, G_N D_N G_N\delta_x\rangle,\]
and similarly for $\bar{G}_N$. 

From $D_N=\Delta_N^2-(\Delta_N)^2=N^{-\lambda}(L_N-\bar{L}_N)$, we have
\begin{align*}
    N^\lambda G_N D_N G_N&=G_N-G_N \bar{L}_N G_N,\\
    N^\lambda \bar{G}_N D_N \bar{G}_N&=\bar{G}_N L_N \bar{G}_N-\bar{G}_N.
\end{align*}
Their difference is 
\[N^\lambda (\bar{G}_N D_N \bar{G}_N-G_N D_N G_N)=\bar{G}_N L_N \bar{G}_N+G_N \bar{L}_N G_N-(\bar{G}_N+G_N).\]
A direct algebraic simplification gives
\begin{equation}\label{eq:resolvent_decomposition}
    \bar{G}_N L_N \bar{G}_N+G_N \bar{L}_N G_N-(\bar{G}_N+G_N)=(\bar{G}_N-G_N)(L_N+\bar{L}_N)(\bar{G}_N-G_N),
\end{equation}
where we used the identity $AQA-BQB=(A-B)QA+BQ(A-B)$ for square matrices $A$, $B$ and $Q$, together with the inverse relation between $G_N$ and $L_N$, and between $\bar{G}_N$ and $\bar{L}_N$.

All matrices in \eqref{eq:resolvent_decomposition} are symmetric and positive semidefinite, and therefore $\bar{G}_N D_N \bar{G}_N-G_N D_N G_N$ is symmetric and positive semidefinite. Hence, the right side of \eqref{eq:varerr} is bounded by 
\begin{equation}\label{eq:bound}
    N^\lambda\|\sqrt{D_N}\bar{G}_N\delta_x\|^2=N^\lambda \langle \delta_x, \bar{G}_N D_N \bar{G}_N \delta_x\rangle.
\end{equation}

Now we consider the integral kernel form of \eqref{eq:bound} componentwise. For convenience, we introduce some operators. Let $R_N$ be the domain restriction operator from $\mathbb{Z}^2$ to $V_N$, and $R_N^*$ is its adjoint, the zero extension embedding operator from $V_N$ to $\mathbb{Z}^2$. Then the operators $\Delta_N$ and $\Delta_N^2$ can be expressed as 
\begin{equation}
    \begin{aligned}
        \Delta_N&=R_N \Delta R_N^*,\\
        \Delta_N^2&=R_N \Delta^2 R_N^*.
    \end{aligned}
\end{equation} 
From these expressions, we have
\begin{equation}\label{eq:DiffDecomp}
    D_N=\Delta^2_N-(\Delta_N)^2=R_N\Delta (I_{\mathbb{Z}^2}-R_N^* R_N)\Delta R_N^*.
\end{equation}
The operator $(I_{\mathbb{Z}^2}-R_N^* R_N)$ in \eqref{eq:DiffDecomp} is the orthogonal projection operator from $\mathbb{Z}^2$ to $V_N^c$. We abbreviate this as $P_N$. In particular, $P_N^2=P_N$, and \eqref{eq:DiffDecomp} can be written as
\begin{equation}\label{eq:DiffDecomp2}
    D_N=(P_N\Delta R_N^*)^*P_N\Delta R_N^*.
\end{equation}

The kernel forms of the operators $R_N^*$ and $P_N$ are
\begin{equation}
    \begin{aligned}
        R_N^*(z,w)&=\delta_{z}(w),&& \forall z\in\mathbb{Z}^2, w\in V_N,\\
        P_N(z,w)&=\delta_{z}(w)\mathbbm{1}_{z\notin V_N},&& \forall z,w\in\mathbb{Z}^2.
    \end{aligned}
\end{equation}
Then we have the following identity
\begin{equation}\label{eq:kernelrep}
    \begin{aligned}
        P_N\Delta R_N^*(z,w)&=\sum_{v\in V_N,u\in\mathbb{Z}^2}P_N(z,v)\Delta(v,u)R_N^*(u,w)\\
        &=\Delta(z,w)\mathbbm{1}_{z\notin V_N}.
    \end{aligned}
\end{equation}
We put \eqref{eq:kernelrep} into \eqref{eq:DiffDecomp2}.
\begin{equation}\label{eq:DiffConv}
    \begin{aligned}
        D_N(z,w)&=\sum_{v\in\mathbb{Z}^2} (P_N\Delta R_N^*) (v,z) (P_N\Delta R_N^*)(v,w)\\
        &=\sum_{v\in\mathbb{Z}^2} \Delta(v,z)\mathbbm{1}_{v\notin V_N} \Delta(v,w)\mathbbm{1}_{v\notin V_N}\\
        &=\sum_{v\notin V_N;\|v-z\|=1} \Delta(v,z)\Delta(v,w).
    \end{aligned}
\end{equation}
The kernel of two-dimensional discrete Laplacian $\Delta$ is given by
\[\Delta(z,w)=\begin{cases}
    -1&z=w,\\
    1/4&\|z-w\|=1,\\
    0&\text{otherwise}.
\end{cases}\]
If $z\neq w$, then $z$ and $w$ cannot have any common neighbor points in $\partial V_N$. In this case, the sum in \eqref{eq:DiffConv} is restricted to $0$. If $z=w$, then we have $D_N(z,z)=r(z)/16\le1$, where $r(z)$ is defined in \eqref{eq:posdef}. Hence, we obtain
\begin{equation}\label{eq:boundarydecompose}
    \begin{aligned}
        \bar{G}_N D_N \bar{G}_N(x,x)&=\sum_{z,w}\bar{G}_N(x,z)D_N(z,w)\bar{G}_N(w,x)\\
        &\le\sum_{z\in\partial^- V_N}\bar{G}_N(x,z)^2\le\sum_{z\in\partial^- V_N}G^0_N(x,z)^2,
    \end{aligned}
\end{equation}
where the last inequality follows from
\[\bar{G}_N=G^0_N-G^m_N,\quad G^m_N(x,y)\ge0\text{ for all }x,y\in V_N.\]

The two-dimensional massless Green function satisfies the following classical upper bound
\begin{equation}\label{eq:lapgreenbound}
    |(-\Delta_N)^{-1}(z,w)|\lesssim\min\left[\frac{d(z)d(w)}{\|z-w\|^2+1},\log\left(\frac{\operatorname{diam}(V_N)}{\|z-w\|+1}\right)\right],
\end{equation}
which follows from standard discrete potential-theoretic arguments based on the discrete Harnack principle. Here we abbreviate $d(z)\coloneq \operatorname{dist}(z,V_N^c)$. Using this, \eqref{eq:boundarydecompose} is estimated by 
\begin{equation}\label{eq:boundarybound}
    \begin{aligned}
        \sum_{z\in \partial^- V_N}G^0_N(x,z)^2&\lesssim \sum_{z\in \partial^- V_N} \frac{1}{N^2}=O(N^{-1}).
    \end{aligned}
\end{equation}
Consequently, by \eqref{eq:varerr}, \eqref{eq:bound} and \eqref{eq:boundarybound}, we have
\begin{equation}\label{eq:varbound}
    |\bar{G}_N(x,x)-G_N(x,x)|\lesssim N^{\lambda-1}.
\end{equation}
\eqref{eq:covineq2} and \eqref{eq:varbound} yield the desired result in \eqref{eq:AuxApprox1}.

\subsection{MM Dominant Regime}\label{ch:MMdom}
In \Cref{ch:DGFFdom}, we use the auxiliary approximation by $\bar{G}_N$, since the analysis for the bi-Laplacian is technically more involved. In this section, in contrast, we directly compare $G_N$ and $\Gamma_N\coloneq N^{-\lambda}G^\Delta_N$, where $G^\Delta_N$ is the Green function of the discrete bi-Laplacian, defined in \eqref{eq:membrane}.

The matrix $L_N-N^\lambda \Delta^2_N=-\Delta_N$ is positive semidefinite, and hence $\Gamma_N-G_N$ is positive semidefinite, using the Löewner order argument again. Analogues of \eqref{eq:covineq1} and \eqref{eq:covineq2} show that it suffices to estimate the diagonal entries in the bulk.

From the identity
\begin{equation}\label{eq:beforeNeumann}
    G_N=(I-(N^\lambda \Delta_N^2)^{-1}\Delta_N)^{-1}(N^\lambda \Delta_N^2)^{-1}=(I-\Gamma_N\Delta_N)^{-1}\Gamma_N,
\end{equation}
we aim to prove its Neumann series representation. More precisely, we divide the goal of this section into two parts, the Neumann series representation and its estimation.

We have the following proposition.
\begin{proposition}\label{thm:Neumann}
    In \eqref{eq:beforeNeumann}, the $L^2$ matrix norm of $\Gamma_N\Delta_N$ is less than $1$, if $N$ is large enough. In particular, the series
    \begin{equation}\label{eq:Neumann}
        G_N=\sum_{n\ge0} (\Gamma_N \Delta_N)^n \Gamma_N=\sum_{n\ge0} \Gamma_N (\Delta_N \Gamma_N)^n
    \end{equation}
    converges in operator norm.
\end{proposition}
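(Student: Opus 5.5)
The plan is to factor the operator through the bi-Laplacian and avoid the crude bound $\|(\Delta_N^2)^{-1}\|\,\|\Delta_N\|\lesssim N^4$. That crude bound would give $\|\Gamma_N\Delta_N\|\lesssim N^{4-\lambda}$, which is only useful for $\lambda>4$. Instead I split the inverse into two square roots, so that the comparison \eqref{eq:posdef} absorbs one factor $-\Delta_N$ exactly.

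Write $A\coloneq\Delta_N^2$ and $B\coloneq-\Delta_N$. Both are symmetric positive definite on $L^2(V_N)$, and $\Gamma_N\Delta_N=-N^{-\lambda}A^{-1}B$. By \eqref{eq:posdef}, for every $v$ vanishing outside $V_N$,
\[\langle v,Av\rangle=\|Bv\|^2+\sum_x r(x)v(x)^2\ge\langle v,B^2v\rangle .\]
Hence $A\ge B^2$ in the Loewner order. Since inversion is operator monotone decreasing (the same Löwner argument the paper already uses), this gives $A^{-1}\le B^{-2}$.

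Now bound
\[\|A^{-1}B\|\le\|A^{-1/2}\|\,\|A^{-1/2}B\|,\qquad \|A^{-1/2}B\|^2=\|BA^{-1}B\| .\]
Conjugating $A^{-1}\le B^{-2}$ by the symmetric matrix $B$ gives $0\le BA^{-1}B\le BB^{-2}B=I$, so $\|A^{-1/2}B\|\le1$.

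For the remaining factor, $\|A^{-1/2}\|^2=\lambda_{\min}(A)^{-1}$. The inequality $A\ge B^2$ gives $\lambda_{\min}(A)\ge\lambda_{\min}(B)^2$. By \eqref{eq:eigen}, $\lambda_{\min}(B)=\widehat D(1,\dots,1)=2\sin^2\bigl(\pi/(4N+4)\bigr)\ge cN^{-2}$. Therefore $\|A^{-1/2}\|\le CN^{2}$.

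Combining the two factors,
\[\|\Gamma_N\Delta_N\|_{L^2(V_N)\to L^2(V_N)}\le CN^{2-\lambda},\]
which is strictly less than $1$ for all large $N$, since $\lambda>2$ in this section. The operator $\Delta_N\Gamma_N$ is the adjoint of $\Gamma_N\Delta_N$, because both $\Gamma_N$ and $\Delta_N$ are symmetric, so it has the same norm.

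For the series identity, note that
\[L_N=N^\lambda\Delta_N^2-\Delta_N=N^\lambda\Delta_N^2\,(I-\Gamma_N\Delta_N)=(I-\Delta_N\Gamma_N)\,N^\lambda\Delta_N^2 .\]
This justifies \eqref{eq:beforeNeumann} and its mirror form $G_N=\Gamma_N(I-\Delta_N\Gamma_N)^{-1}$. Both $(I-\Gamma_N\Delta_N)^{-1}$ and $(I-\Delta_N\Gamma_N)^{-1}$ are given by geometric series that converge in operator norm, since the ratio is $\le CN^{2-\lambda}<1$. Multiplying these series by $\Gamma_N$ yields both expressions in \eqref{eq:Neumann}.

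The step I expect to be the main obstacle is the square-root splitting. The natural guess is to square the inequality $A\ge B^2$, but squaring is not operator monotone, so that route is not valid. The argument has to pass through $A^{-1/2}$ and the monotone map $t\mapsto t^{-1}$, combined with the conjugation $BA^{-1}B\le I$. Everything else is routine spectral bookkeeping using \eqref{eq:eigen} and \eqref{eq:posdef}.
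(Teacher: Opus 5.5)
Your proof is correct. It reaches the same bound $\|\Gamma_N\Delta_N\|\lesssim N^{2-\lambda}$ from the same two inputs as the paper: the comparison $\Delta_N^2\ge(\Delta_N)^2$ coming from \eqref{eq:posdef}, and the fact that the smallest eigenvalue of $-\Delta_N$ is of order $N^{-2}$. The difference lies in the factorization. The paper puts $g=\Gamma_N\Delta_N f$ and evaluates $\langle g,\Delta_N f\rangle$ in two ways. It is at most $\langle\Delta_N g,f\rangle\le\|\Delta_N g\|\,\|f\|$, and by \eqref{eq:posdef} it is at least $N^\lambda\|\Delta_N g\|^2$. In your notation this is precisely $\|BA^{-1}B\|\le1$, obtained by Cauchy--Schwarz rather than by operator monotonicity of inversion. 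The paper then writes $A^{-1}B=B^{-1}(BA^{-1}B)$ and uses $\|B^{-1}\|\lesssim N^2$ directly. You instead split $A^{-1}B=A^{-1/2}(A^{-1/2}B)$, which costs the extra step that the smallest eigenvalue of $A$ dominates the square of that of $B$. Both arguments are valid. The paper's route is slightly more elementary, with no square roots and no L\"owner-order inversion. It also shows that your closing remark is overstated: once $BA^{-1}B\le I$ is known, the factorization $A^{-1}B=B^{-1}\cdot BA^{-1}B$ finishes at once, so the argument need not pass through $A^{-1/2}$. Your justification of the second form of the series via adjoints and $L_N=(I-\Delta_N\Gamma_N)N^\lambda\Delta_N^2$ is correct. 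The equality of the two series also follows termwise from $(\Gamma_N\Delta_N)^n\Gamma_N=\Gamma_N(\Delta_N\Gamma_N)^n$.
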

\begin{proof}
    For any $f:\mathbb{Z}^2\to\mathbb{R}$ zero outside $V_N$, let $g=\Gamma_N\Delta_N f$, and $g$ is zero outside $V_N$. Taking inner product of $g$ and $\Delta_N f$, we have
    \begin{equation}\label{eq:NeumannUpper}
        \begin{aligned}
            \langle g,\Delta_N f\rangle&=\langle \Delta_N g,f\rangle\\
            &\le \|\Delta_N g\|_{L^2(V_N)}\|f\|_{L^2(V_N)}.
        \end{aligned}
    \end{equation}
    On the other hand, by \eqref{eq:posdef},
    \begin{equation}\label{eq:NeumannLower}
        \begin{aligned}
            \langle g,\Delta_N f\rangle&=\langle g,N^\lambda \Delta^2_N g\rangle\\
            &\overset{\mathclap{\eqref{eq:posdef}}}{=}N^\lambda \langle \Delta_N g,\Delta_N g\rangle +N^\lambda \sum_{z\in V_N} r(z)g(z)^2\\
            &\ge N^\lambda \|\Delta_N g\|_{L^2(V_N)}^2.
        \end{aligned}
    \end{equation}
    Combining \eqref{eq:NeumannUpper} and \eqref{eq:NeumannLower}, an inequality $\|\Delta_N g\|_{L^2(V_N)}\le N^{-\lambda} \|f\|_{L^2(V_N)}$ is derived, and
    \begin{equation}\label{eq:normineq}
        \|g\|_{L^2(V_N)}\le\|(-\Delta_N)^{-1}\|_{L^2(V_N)\to L^2(V_N)}\|(-\Delta_N)g\|_{L^2(V_N)}\lesssim N^{2-\lambda}\|f\|_{L^2(V_N)}.
    \end{equation}
    Here $\|(-\Delta_N)^{-1}\|_{L^2(V_N)\to L^2(V_N)}\asymp N^2$ is from the largest eigenvalue scaling of $(-\Delta_N)^{-1}$; see \eqref{eq:eigen}.
    
    From the definition of matrix norm, by \eqref{eq:normineq} we have
    \[\|\Gamma_N \Delta_N\|_{L^2(V_N)\to L^2(V_N)}=\sup_{f\neq0} \frac{\|\Gamma_N \Delta_N f\|_{L^2(V_N)}}{\|f\|_{L^2(V_N)}}=\sup_{f\neq0}\frac{\|g\|_{L^2(V_N)}}{\|f\|_{L^2(V_N)}}\lesssim N^{2-\lambda}.\]
    Since $\lambda>2$, we conclude that $\|\Gamma_N \Delta_N\|_{L^2(V_N)\to L^2(V_N)}\ll1$ if $N$ is sufficiently large. Hence, the series \eqref{eq:Neumann} converges in the operator norm topology.
\end{proof}
\begin{remark}
    The purpose of the operator norm estimate is not to obtain a sharp bound directly, but to justify the Neumann-series representation of $G_N$, which will then be estimated componentwise.
\end{remark}

Since we treat finite-dimensional matrices, \eqref{eq:Neumann} also implies componentwise convergence. Therefore we have
\begin{equation}
    G_N(z,w)-\Gamma_N(z,w)=\sum_{n\ge1} [ \Gamma_N (\Delta_N\Gamma_N)^n](z,w)
\end{equation}
for any $z,w\in V_N$. 

The next step is to estimate $[\Gamma_N (\Delta_N \Gamma_N)^n](x,x)$ for each $n\ge1$. We abbreviate $R_0\coloneq \Gamma_N$ and $R_{n+1}\coloneq R_n \Delta_N \Gamma_N$, for $n\ge0$. For $n\ge1$, we have
\begin{equation}\label{eq:NeumannCW}
    \begin{aligned}
        |R_n(x,x)|&=\left|\sum_{y_1,\dots,y_n\in V_N}\Gamma(x,y_1)(\Delta\Gamma)(y_1,y_2)\dots(\Delta\Gamma)(y_n,x)\right|\\
        &\le\sum_{y_1,\dots,y_n \in V_N} |\Gamma(x,y_1)||(\Delta\Gamma)(y_1,y_2)|\dots|(\Delta\Gamma)(y_n,x)|.
    \end{aligned}
\end{equation}
In \autocite[Theorem 1]{muller2019estimates}, there are the following inequalities
\begin{equation}\label{eq:MMbound}
    \begin{aligned}
        |G^\Delta_N(z,w)|&\lesssim \min \left(d(z)d(w),\frac{d(z)^2 d(w)^2}{\|z-w\|^2+1}\right),\\
        |\Delta G^\Delta_N(z,w)|&\lesssim \log\left(1+\frac{d(w)^2}{\|z-w\|^2+1}\right),
    \end{aligned}
\end{equation}
where $d(z)=\operatorname{dist}(z,V_N^c)$, and the Laplacian is taken over the variable $z$. Using these inequalities, we have $|\Gamma_N(\cdot,\cdot)|\lesssim N^{2-\lambda}$ and $|\Delta_N \Gamma_N(\cdot,\cdot)|\lesssim N^{-\lambda}\log N$ on $V_N$. With \eqref{eq:NeumannCW}, we conclude that 
\begin{equation}\label{eq:ptwisebd}
    \begin{aligned}
        |R_n(x,x)|&\lesssim N^{2n}\times N^{2-\lambda}\times (N^{-\lambda}\log N)^n\\
        &=O(N^{2-\lambda}(N^{2-\lambda}\log N)^n )
    \end{aligned}
\end{equation}
for $n\ge1$. Therefore, we have
\begin{equation}
    \begin{aligned}
        |G(x,x)-\Gamma(x,x)|&\le\sum_{n\ge1}|R_n(x,x)|\lesssim N^{2-\lambda}\sum_{n\ge1} (N^{2-\lambda}\log N)^n\\
        &=N^{2-\lambda}(1+O(N^{2-\lambda}\log N))O(N^{2-\lambda}\log N)\\
        &=O(N^{4-2\lambda}\log N),
    \end{aligned}
\end{equation}
and the error $O(N^{4-2\lambda}\log N)$ is the desired estimate in \Cref{thm:domregime}.

    \appendix
    % !TEX root = ../main.tex
\section{Massive Green Function Analysis}\label{section:massiveanalysis}
The main purpose of this section is to construct an appropriate estimate for $G^m$. By translation invariance of the symmetric random walk, we can write $G^m(x,y)=G^m(x-y,0)$, and thus we may assume $y=0$. Further we abbreviate $G^m(x)=G^m(x,0)$ throughout this section. This section is composed of two parts, the off-diagonal estimate (i.e., $x\neq0$) and the on-diagonal estimate (i.e., $x=0$).

\subsection{Off-diagonal Estimate}
Since \Cref{thm:compregime} distinguishes between the regimes $\|x-y\| \gtrsim N^{\lambda/2}$ and $\|x-y\|\ll N^{\lambda/2}$, we derive estimates adapted to each regime.

The long-range asymptotic behavior of the massive Green function was studied in \autocite{michta2022asymptotic}, where precise direction-dependent asymptotics were obtained in several scaling regimes. However, those asymptotic results are not directly applicable to our setting, since in our model we only assume $\|x\|\asymp N$, while the direction of $x$ may vary with $N$. For our purposes, precise asymptotics are unnecessary; uniform upper bounds are sufficient. The goal of this section is to establish the uniform estimates \Cref{thm:massiveanalysis}.

We distinguish between statements with and without a difference structure. If it does not include such structure, then its proof becomes simple by the universal Gaussian estimate 
\begin{equation}\label{eq:prob_bdd}
    \mathbb{P}(S_n=x)\le C \frac{e^{-c\|x\|^2/n}}{n^{d/2}}.
\end{equation}
It is a classical estimate; see, e.g., \autocite{hebisch1993gaussian}. 

Since $m$ is bounded, we may assume $m\in(0,M)$ for some fixed $M>0$. For such $M$, there is $c=c_M\in(0,1)$ satisfying $\log (1+m^2)\ge cm^2$ for all $m\in(0,M)$. We can therefore derive a bound for $\rho^n=(1+m^2)^{-n}$:
\begin{equation}\label{eq:survrate_bdd}
    \rho^n=(1+m^2)^{-n}=e^{-n\log (1+m^2)}\le e^{-cm^2 n}.
\end{equation}
Combining these two estimates, for $x\neq0$, the following estimate
\begin{equation}\label{eq:massivebound}
    G^m(x)=\sum_{n\ge1}\rho^{n+1} \mathbb{P}(S_n=x)\le\sum_{n\ge1} C n^{-d/2} e^{-c(\|x\|^2/n +m^2n)}
\end{equation}
is obtained. In \eqref{eq:massivebound}, we do not consider $n=0$ in the sum, since $\mathbb{P}(S_0=x)=0$ unless $x=0$.

In contrast, if the statement includes a difference structure, we cannot directly use the heat-kernel estimates such as \eqref{eq:prob_bdd}, since the parity of $\|x\|_1$ might cause some issues. Instead of using this, we use the Fourier transform for $\mathbb{P}(S_n=x)$, 
\begin{equation}\label{eq:prob_fourier}
    \mathbb{P}(S_n=x)=\frac{1}{(2\pi)^d}\int_{[-\pi,\pi]^d} \psi(k)^n e^{-ik\cdot x}\mathrm{d}k,
\end{equation}
where, for $k=(k_1,\dots,k_d)$,
\[\psi(k)\coloneq \frac{1}{d}\sum_{j=1}^d \cos(k_j)\]
is the characteristic function of $X_1$ in \Cref{ch:RWintro}. Hence, $\psi^n$ is the characteristic function of $S_n$.

For the rest of this section, we write $\mathbb{P}(S_n=x)$ as $p_n(x)$. We fix $d=2$ in the formulas above such as \eqref{eq:prob_bdd} or \eqref{eq:massivebound}.

\subsubsection{Subcritical Massive Analysis}\label{ch:submass}
We use \eqref{eq:massivebound}. We assume that $m$ is bounded and $m\|x\|\to\infty$. For the function $f(t)\coloneq \|x\|^2/t+m^2 t$ on $t>0$, the function $e^{-f(t)}$ is integrable on $[1,\infty)$. We therefore have an inequality
\begin{equation}\label{eq:subcritint}
    \eqref{eq:massivebound}\lesssim \int_1^\infty t^{-1}e^{-c (\|x\|^2/t +m^2t)}\mathrm{d}t=\int_1^\infty t^{-1}e^{-cm\|x\| (\|x\|/mt +mt/\|x\|)}\mathrm{d}t.
\end{equation}
We substitute $s=mt/\|x\|$ in the integral of \eqref{eq:subcritint}, and we have to estimate 
\begin{equation}\label{eq:subcritint2}
    \int_{m/\|x\|}^\infty s^{-1}e^{-cm\|x\| (1/s +s)}\mathrm{d}s.
\end{equation}
Since $m$ is bounded and $m\|x\|\to\infty$, we also have $m/\|x\|\to0$. Indeed, the integrand in \eqref{eq:subcritint2} is integrable on $(0,\infty)$. The integral can be estimated using the integral representation of Bessel function (see, e.g., \autocite[(10.32.10)]{NIST:DLMF})
\begin{equation}\label{eq:BesselRep}
    K_\nu(z)=\frac{1}{2}\left(\frac{z}{2}\right)^\nu\int_0^\infty e^{-t-z^2/4t}t^{-\nu-1}\mathrm{d}t,
\end{equation}
where $K_\nu(z)$ is the modified Bessel function of the second kind. The function $K_\nu(z)$ has an asymptotic (see, e.g., \autocite[(10.25.3)]{NIST:DLMF})
\begin{equation}\label{eq:BesselAsymp}
    K_\nu(z)\simeq \sqrt{\frac{\pi}{2z}}e^{-z},\quad z\to\infty,\text{ $\nu$ fixed.}
\end{equation}

From these properties of the Bessel function, we deduce that
\begin{equation}\label{eq:subcritint3}
    \eqref{eq:massivebound}\lesssim \int_0^\infty s^{-1}e^{-cm\|x\| (1/s +s)}\mathrm{d}s=2K_0(cm\|x\|)\asymp (m\|x\|)^{-1/2}e^{-cm\|x\|}.
\end{equation}

\subsubsection{Critical Massive Analysis}\label{ch:critmass}
Here we assume that $m\|x\|\to s$ for some $s>0$ with $\|x\|\to\infty$. When estimating $G^m(x)$, we use \eqref{eq:massivebound}.

\paragraph{Pointwise Estimate}
We divide the range of the series \eqref{eq:massivebound} by $n\le \|x\|^2$ and $n>\|x\|^2$. On $n\le\|x\|^2$, we use the trivial inequality $e^{-cm^2n}\le 1$. We have
\begin{equation}\label{eq:critmassbound_small}
    \sum_{n\le\|x\|^2}n^{-1} e^{-C\|x\|^2/n -cm^2n}\le \sum_{n\le\|x\|^2}n^{-1} e^{-C\|x\|^2/n}.
\end{equation}
Since $e^{-t}\lesssim t^{-1}$ uniformly in $t>0$, \eqref{eq:critmassbound_small} is bounded by
\begin{equation}\label{eq:critmassbound_small2}
    \sum_{n\le\|x\|^2}n^{-1} e^{-C\|x\|^2n}\lesssim \sum_{n\le\|x\|^2}\|x\|^{-2}= O(1).
\end{equation}

On $n>\|x\|^2$, we bound $e^{-C\|x\|^2/n}$ by $1$. Since $m=s\|x\|^{-1}(1+o(1))$,
\begin{equation}\label{eq:critmassbound_large}
    \sum_{n>\|x\|^2}n^{-1} e^{-C\|x\|^2/n -cm^2n}\le \sum_{n>\|x\|^2}n^{-1} e^{-c'\|x\|^{-2}n}
\end{equation}
for some $c'>0$. Using again $e^{-t}\lesssim t^{-1}$ uniformly in $t>0$, 
\begin{equation}\label{eq:critmassbound_large2}
    \sum_{n>\|x\|^2}n^{-d/2} e^{-c'\|x\|^{-2}n}\lesssim \sum_{n>\|x\|^2}\|x\|^2 n^{-2}\asymp \|x\|^2\cdot \|x\|^{-2}=O(1).
\end{equation}
Hence \eqref{eq:critmassbound_small2} and \eqref{eq:critmassbound_large2} imply the desired result. This proves the case $\alpha=0$.

\paragraph{Gradient Estimate}\label{ch:critmass_grad}
When estimating $\nabla^\alpha G^m(x)$ for a multiindex $\alpha$ with $|\alpha|>0$, we use Fourier transform \eqref{eq:prob_fourier}. We have
\[\nabla^\alpha p_n(x)=\frac{1}{(2\pi)^2}\int_{[-\pi,\pi]^2 }\prod_j(e^{-i k\cdot e_j }-1)^{\alpha_j}\psi(k)^n e^{-ik\cdot x}\mathrm{d}k\]
and hence,
\begin{equation}\label{eq:gradfourier}
    \begin{aligned}
        \nabla^\alpha G^m(x)&=\frac{1}{(2\pi)^2}\sum_n \rho^{n+1} \int_{[-\pi,\pi]^2}\prod_j(e^{-i k\cdot e_j }-1)^{\alpha_j}\psi(k)^ne^{-ik\cdot x}\mathrm{d}k\\
        &=\frac{\rho}{(2\pi)^d} \int_{[-\pi,\pi]^2}\left(\sum_n \rho^n \psi(k)^n\right) \prod_j(e^{-i k\cdot e_j }-1)^{\alpha_j}e^{-ik\cdot x}\mathrm{d}k.
    \end{aligned}
\end{equation}
In \eqref{eq:gradfourier}, the summation and integration can be exchanged since
\begin{equation}\label{eq:ratio}
    \left|\rho^n \psi(k)^n e^{-ik\cdot x}\prod_j(e^{-ik\cdot e_j}-1)^{\alpha_j}\right|\le 2^{|\alpha|}\rho^n,
\end{equation}
which is summable over $n$ unless $m=0$.

Since $|\rho|<1$, applying geometric series identity on \eqref{eq:gradfourier} yields
\begin{equation}\label{eq:gradfourier2}
    \text{\eqref{eq:gradfourier}}=\frac{\rho}{(2\pi)^2} \int_{[-\pi,\pi]^2}\frac{e^{-ik\cdot x}\prod_j(e^{-i k\cdot e_j }-1)^{\alpha_j}}{1-\rho\psi(k)}\mathrm{d}k.
\end{equation}

Since $\psi(k)=1-\|k\|^2/4+O(\|k\|^4)$ for $\|k\|\to 0$, for a small $\delta>0$, there exists $c,C>0$ depending on $\delta$, such that
\begin{equation}\label{eq:denomasymp}
    1-\rho+\rho c\|k\|^2\le 1-\rho\psi(k)\le 1-\rho + \rho C\|k\|^2
\end{equation}
for $\|k\|\le \delta$. For such $\delta$, we divide the region of the integral in \eqref{eq:gradfourier2} into $E_1\coloneq \{k:\|k\|\le\delta\}$ and $E_2$ being its complement. 

In $E_1$, we substitute $k=m\xi$, and
\begin{equation}\label{eq:fouriersmall}
    \begin{aligned}
        &\int_{E_1}\frac{e^{-ik\cdot x}\prod_j(e^{-i k\cdot e_j }-1)^{\alpha_j}}{1-\rho\psi(k)}\mathrm{d}k\\
        =m^2 &\int_{\{\|\xi\|\le\delta/m\}}\frac{e^{-im\xi\cdot x}\prod_j(e^{-i m\xi \cdot e_j }-1)^{\alpha_j}}{1-\rho\psi(m\xi)}\mathrm{d}\xi.
    \end{aligned}
\end{equation}

Since $1-\rho\asymp m^2$, \eqref{eq:denomasymp} yields $1-\rho \psi(m\xi)\asymp m^2 (1+\|\xi\|^2)$, and
\begin{equation}\label{eq:fouriersmall2}
    \left|\frac{\prod_j(e^{-i m\xi\cdot e_j }-1)^{\alpha_j}}{1-\rho\psi(m\xi)}\right|\lesssim \left|\frac{\prod_j(e^{-i m\xi\cdot e_j }-1)^{\alpha_j}}{m^2 (1+\|\xi\|^2)}\right|\lesssim \frac{m^{|\alpha|-2}\|\xi\|^{|\alpha|}}{1+\|\xi\|^2}.
\end{equation}

From \eqref{eq:fouriersmall} and \eqref{eq:fouriersmall2}, it remains to control the integral
\[\int_{\{\|\xi\|\le\delta/m\}}e^{-i\xi\cdot (mx)}\frac{\|\xi\|^{|\alpha|}}{1+\|\xi\|^2}\mathrm{d}\xi.\]
This integral is understood as an oscillatory integral, not as an absolutely convergent integral. By standard oscillatory integral estimates for symbols, or equivalently by the smoothness of the continuum massive Green
function away from the origin, it is uniformly bounded for $mx$ in compact subsets of $\mathbb R^2\setminus\{0\}$. Hence the $E_1$-contribution is $O(m^{|\alpha|})=O(\|x\|^{-|\alpha|})$.

On $E_2$, $|\psi(k)|\le c<1$ uniformly for some $c>0$, depending on $\delta$, and the denominator of \eqref{eq:gradfourier2} is uniformly bounded away from $0$. That is, the integrand without the oscillator
\[\frac{\prod_j(e^{-i k\cdot e_j }-1)^{\alpha_j}}{1-\rho\psi(k)}\]
is a smooth function. Repeated integration by parts yields rapid decay in $\|x\|$.

In summary, combining the estimates on $E_1$ and $E_2$, we conclude that $|\nabla^\alpha G^m(x)|\lesssim \|x\|^{-|\alpha|}$.

\subsubsection{Supercritical Massive Analysis}\label{ch:supmass}
We analyze the behavior of the regime $m\|x\|\to0$. As already mentioned in \Cref{ch:randomwalk}, $G^0_\Lambda$ does not converge when we take $\Lambda\uparrow\mathbb{Z}^d$ limit. To avoid this issue, subtracting $G^0_\Lambda(0)$ before taking limit should be considered, and its limit is the potential kernel $\mathfrak{a}$. We use the same idea for $G^m$.

Let $J(m)=J(m;x)\coloneq G^m(x)-G^m(0)$. Using the random walk representation,
\begin{equation}
    J(m;x)=\sum_{n\ge1}\rho^{n+1} (p_n(x)-p_n(0)).
\end{equation}
The main goal is to analyze
\begin{equation}\label{eq:supercritmass_RW}
    J(m;x)-\mathfrak{a}(x)=\sum_{n\ge1}(\rho^{n+1}-1)(p_n(x)-p_n(0)).
\end{equation}

We use the same argument in \Cref{ch:critmass_grad}. The geometric series for $\psi(k)^n$ is valid except when $|\psi(k)|=1$. It happens only when $k=0$ or $k\in\{\pm\pi\}^2$. For the latter case, the geometric series identity $\sum(\psi(k))^n=(1-\psi(k))^{-1}$ fails at such points. This does not affect the integral, since $(1-\psi(k))^{-1}$ is defined there and the exceptional set is finite and hence has zero measure.

In contrast, $k=0$ may cause a singularity. On a neighborhood of $k=0$, however, we have
\[\left|\frac{e^{-ik\cdot x}-1}{1-\psi(k)}\right|\lesssim \frac{\|k\|\|x\|}{\|k\|^2}=\|x\|\|k\|^{-1},\]
and $\|k\|^{-1}\in L^1_{\mathrm{loc}}(\mathbb{R}^2)$. Therefore, the integrand is locally integrable near $k=0$, and we may rewrite \eqref{eq:supercritmass_RW} as
\begin{equation}\label{eq:supercrit_fourier}
    \frac{1}{(2\pi)^d} \int_{[-\pi,\pi]^2}(e^{-ik\cdot x}-1)\left(\frac{\rho}{1-\rho\psi(k)}-\frac{1}{1-\psi(k)}\right)\mathrm{d}k.
\end{equation}

We divide the domain of integral in \eqref{eq:supercrit_fourier} into $E_1$ and $E_2$ as in \Cref{ch:critmass_grad}. On $E_1$, we again substitute $k=m\xi$, and
\begin{equation}\label{eq:supercrit_fourier_small}
    \begin{aligned}
        &\left|\int_{E_1}(e^{-ik\cdot x}-1)\left(\frac{\rho}{1-\rho\psi(k)}-\frac{1}{1-\psi(k)}\right)\mathrm{d}k\right|\\
        \le\ &m^2\int_{\|\xi\|\le \delta/m}|e^{-i\xi \cdot (mx)}-1|\left|\frac{\rho}{1-\rho\psi(m\xi)}-\frac{1}{1-\psi(m\xi)}\right|\mathrm{d}\xi.
    \end{aligned}
\end{equation}
From the asymptotics $1-\rho\psi(m\xi)\asymp m^2(1+\|\xi\|^2)$ and $1-\psi(m\xi)\asymp m^2\|\xi\|^2$, the integrand in \eqref{eq:supercrit_fourier_small} has the following estimate
\begin{equation}\label{eq:supercrit_fourier_small2}
    \begin{aligned}
        \left|\frac{\rho}{1-\rho\psi(m\xi)}-\frac{1}{1-\psi(m\xi)}\right|&=\frac{|\rho-1|}{(1-\rho\psi(m\xi))(1-\psi(m\xi))}\\
        &\asymp \frac{1}{m^2(1+\|\xi\|^2)\|\xi\|^2}. 
    \end{aligned}
\end{equation}
Combining \eqref{eq:supercrit_fourier_small2} with the bound $|e^{-i\xi\cdot (mx)}-1|\le \min(2,\|\xi\|\|mx\|)$, we deduce that
\begin{equation}
    \begin{aligned}
        \text{\eqref{eq:supercrit_fourier_small}}&\lesssim\int_{\|\xi\|\le\delta/m}\frac{\min(1,\|\xi\|\|mx\|)}{(1+\|\xi\|^2)\|\xi\|^2}\mathrm{d}\xi=\int_0^{2\pi}\int_0^{\delta/m}\frac{\min(1,r\|mx\|)}{(1+r^2)r^2}r\mathrm{d}r\mathrm{d}\theta\\
        &=\int_0^{2\pi}\int_0^{\|mx\|^{-1}}\frac{\|mx\|}{(1+r^2)}\mathrm{d}r\mathrm{d}\theta+\int_0^{2\pi}\int_{\|mx\|^{-1}}^{\delta/m}\frac{1}{(1+r^2)r}\mathrm{d}r\mathrm{d}\theta\\
        &=O(\|mx\|)+O(\|mx\|^2)=O(\|mx\|).
    \end{aligned}
\end{equation}

On $E_2$, both $1-\rho\psi(k)$ and $1-\psi(k)$ are bounded away from $0$ uniformly. Hence, we obtain
\begin{equation}
    \left|\frac{\rho}{1-\rho\psi(k)}-\frac{1}{1-\psi(k)}\right|=\frac{|\rho-1|}{|1-\rho\psi(k)||1-\psi(k)|}\lesssim |\rho-1|\asymp m^2,
\end{equation}
and
\begin{equation}
    \left|\int_{E_2}(e^{-ik\cdot x}-1)\left(\frac{\rho}{1-\rho\psi(k)}-\frac{1}{1-\psi(k)}\right)\mathrm{d}k\right|\lesssim m^2 \int_{E_2}|e^{-ik\cdot x}-1|\mathrm{d}k\lesssim m^2.
\end{equation}

Therefore we conclude that $|J(m;x)-\mathfrak{a}(x)|=o(1)$. 

\subsection{On-diagonal Estimate}\label{ch:originvar}
As already mentioned in \Cref{ch:shortrange}, the estimate for $G^m(0,0)$ in $d=2$ can be improved as
\[G^m(0,0)=\frac{2}{\pi} |\log m|+O(1), \quad m\to 0+.\]
In this section, we derive this estimate.

The local central limit theorem (see, e.g., \autocite[Theorem 1.2.1]{lawler2013intersections}) implies 
\[p_{2n}(0)=\frac{1}{\pi n}+O(n^{-2}),\]
uniformly in $n>0$. It is sufficient to consider only the even indices, since the random walk cannot visit the origin at odd times.

Substituting this into \eqref{eq:massivewholespace}, we obtain
\[G^m(0,0)=\rho\sum_{n\ge0}\rho^n p_{2n}(0)=\rho+ \rho\sum_{n\ge1}\rho^{2n}\left(\frac{1}{\pi n}+O(n^{-2})\right).\]
In the summand, $\rho^{2n} n^{-2}$ is summable and the sum is $O(1)$, which is the desired rate for the error. Hence we absorb the $O(n^{-2})$ contribution into the $O(1)$ error. Therefore,
\[\rho\sum_{n\ge1}\frac{\rho^{2n}}{\pi n}=-\frac{\rho}{\pi}\log (1-\rho^2).\]
Put $\rho=1-m^2+O(m^4)$ and $1-\rho^2=m^2(1+o(1))$. We obtain
\[\rho \log (1-\rho^2) =(1-m^2 +O(m^4))\log (m^2 (2+o(1)))=2\log m +O(1),\]
since $m^2\log m\to 0$, as $m\to0+$. Hence
\[G^m(0,0)=-\frac{2}{\pi}\log m+O(1)=\frac{2}{\pi}|\log m|+O(1).\]

    \section*{Statements and Declarations}

    \subsection*{Data Availability}
    Data sharing is not applicable to this article as no datasets were generated or analyzed during the current study.

    \subsection*{Competing Interests}
    The author declares no competing interests.

    \subsection*{Acknowledgement} The author thanks Professor Kyeongsik Nam for his valuable comments.

    \printbibliography
\end{document}